%%%%%%%%%%%%%%%%%%%%%%%%%%%%%%%%%%%
%%%%%%%%%%%%%%%%%%%%%%%%%%%%%%%%%%%
% Version by Bao 21.11.2024
%%%%%%%%%%%%%%%%%%%%%%%%%%%%%%%%%%%
%%%%%%%%%%%%%%%%%%%%%%%%%%%%%%%%%%%
%\documentclass[12pt]{amsart}
\documentclass[reqno,11pt]{amsart}
\usepackage{mathrsfs}
\usepackage{amsmath}
\usepackage{amsfonts}
\usepackage{amssymb}
\usepackage{xcolor}
\usepackage{graphicx}
\allowdisplaybreaks

\usepackage[colorlinks=true,citecolor=black,linkcolor=black]{hyperref}
\usepackage{enumerate}

\setlength\parindent{0pt}

\usepackage{epstopdf}
\usepackage{tikz}

\usepackage[margin=1in]{geometry}

\newtheorem{theorem}{Theorem}[section]
\newtheorem{lemma}[theorem]{Lemma}

\newtheorem{definition}[theorem]{Definition}

\newtheorem{proposition}[theorem]{Proposition}
\newtheorem{remark}[theorem]{Remark}

\newcommand{\norm}[1]{\left\lVert#1\right\rVert}
\newcommand{\abs}[1]{\left|#1\right|}

\newcommand{\R}{\mathbb{R}}
\newcommand{\C}{\mathsf{C}}

\numberwithin{equation}{section}

%\definecolor{bostonuniversityred}{rgb}{0.8, 0.0, 0.0}
%\newcommand{\amit}{\textcolor{bostonuniversityred}} 
%\definecolor{byzantium}{rgb}{0.44, 0.16, 0.39}

%%%%%%%%%%%%%%%%%%%%%%%%%%%%%%%%%%
% new commands by Bao:
\newcommand{\na}{\nabla}

 %changed from Bao's notations.

\newcommand{\wt}{\widetilde}

\newcommand{\eps}{\varepsilon}
\renewcommand{\epsilon}{\varepsilon}
\renewcommand{\phi}{\varphi}
\newcommand{\LO}[1]{L^{#1}(\Omega)}

\newcommand{\blue}{}

\renewcommand{\div}{\nabla\cdot}

\newcommand{\uinf}{u_{\infty}}

\newcommand{\Uinf}{U_{\infty}}

\newcommand{\E}{\mathcal{E}}
\newcommand{\D}{\mathcal{D}}
\newcommand{\intO}{\int_{\Omega}}

\newcommand{\bra}[1]{\left(#1\right)}
\newcommand{\avO}[1]{\left[#1\right]_{\Omega}}
\newcommand{\avo}[2]{\left[#1\right]_{\omega_{#2}}}

\newcommand{\RR}{\mathsf{R}}
\renewcommand{\S}{\mathsf{S}}
\newcommand{\V}{\mathsf{V}}
\newcommand{\Ed}{\mathsf{Edge}}
\newcommand{\G}{\mathsf{G}}
\newcommand{\W}{\mathsf{W}}

\renewcommand{\ln}{\log}

\title[Reaction-diffusion systems with degenerate reaction]{On the equilibriation of chemical reaction-diffusion systems with degenerate reactions}%{Trend to equilibrium for degenerate reaction-diffusion systems \blue{modelling chemical reaction networks}}
\author[L. Desvillettes]{Laurent Desvillettes}
\address{Laurent Desvillettes\hfill\break
	Universit\'e de Paris and Sorbonne Universit\'e, CNRS and IUF, IMJ-PRG,
	F-75013 Paris, France}
\email{desvillettes@imj-prg.fr}

\author[K.-D. Phung]{Kim Dang Phung}
\address{Kim Dang Phung\hfill\break
	Institut Denis Poisson, Universit\'e d'Orl\'eans, Universit\'e de Tours \& CNRS UMR 7013, B\^{a}timent de Math\'ematiques, Rue de Chartres, BP. 6759, 45067 Orl\'eans, France}
\email{kim\_dang\_phung@yahoo.fr}
\author[B.Q. Tang]{Bao Quoc Tang}
\address{Bao Quoc Tang \hfill\break
	Institute of Mathematics and Scientific Computing, University of Graz,
	Heinrichstrasse 36, 8010 Graz, Austria}
\email{quoc.tang@uni-graz.at, baotangquoc@gmail.com}

\begin{document}
	\begin{abstract}
		\blue{The trend to equilibrium for reaction-diffusion systems
		modelling chemical reaction networks is investigated, in the case when
		reaction processes happen on subsets of the domain.
		We prove the convergence to equilibrium by directly showing functional inequalities in terms of entropy method.
		Our approach allows us to deal with nonlinearities of arbitrary orders, for which only global renormalised solutions are known to globally exist. For bounded solutions, we also prove the convergence to equilibrium
		when the diffusion as well as the reaction are degenerate, that is both diffusion and reaction processes only act on specific subsets of the domain.}
		
%		The trend to equilibrium for reaction-diffusion systems \blue{modelling chemical reaction networks} is investigated, in the case when reaction processes might happen only on some open subsets of the domain. A special case has been studied recently in [Desvillettes, L., \& Phung, K. D. (2022). \textit{Journal of Differential Equations}, 338, 227-255] using log convexity technique from controllability theory, which in turn requires some amount of regularity for the solutions, and is difficult to generalise to more general systems. In this paper, we prove the convergence to equilibrium directly \blue{showing functional inequalities in terms of entropy method}. One major advantage of our approach is that it allows to deal with nonlinearities of arbitrary orders, for which only global renormalised solutions are known to globally exist. For a specific situation where solutions are known to be bounded, we also prove the convergence to equilibrium when the diffusion as well as the reaction rates are degenerate. For this situation, we also treat the case  of  reactions happening in a set of strictly positive measure which may have an empty interior.
% only in measurable sets.
	\end{abstract}
	\maketitle
	
	\noindent{\small{\textbf{Classification AMS 2010:} 35A01, 35A09, 35K57, 35Q92.}}
	
	\noindent{\small{\textbf{Keywords:} Chemical reaction-diffusion systems; Degenerate reaction and diffusion; Complex balanced conditions; Entropy method; Convergence to equilibrium}}
	%\section{Structures}
	%\begin{itemize}
	%	\item General chemical reaction networks with partial reactions on \textbf{open subdomains with positive measure}.
	%	\item Special case of three species:
	%	\begin{itemize}
		%		\item Two overlapping \textbf{measurable (only) subsets.}
		%		\item Two disjoint \textbf{measurable subsets} in which one subset is open.
		%		\item Spatial reaction + spatial diffusion.
		%	\end{itemize}
	%	\item Generalizations of the case of three species to the case of general chain of reactions?
	%\end{itemize}
	
	\tableofcontents
	
	\section{Introduction and main results}
	%%%%%%%%%%%%%%%%%%%%%%%%%%%%%%%%%%%%%%%%%%%%%%%%%%%%%%%%%%%%%%%%%%%%%%%%%%%%%%%%%%%%%%%%%%%%%%%%%%%%%%%%%%%%
	
	Convergence to equilibrium for reaction-diffusion systems modelling chemical reactions has been studied since the eighties in e.g. \cite{groger1983asymptotic,glitzky1996free,glitzky1997energetic} and has witnessed considerable progress recently, see e.g. \cite{Desvillettes2006,Fellner2016a,desvillettes2017trend,Mielke2015,Pierre2017,Haskovec2018,GS22, mielke2024convergence} \blue{and references therein}. Most of these works, if not all, assume a common condition: the diffusion and reactions in the system are non-degenerate, in the sense that all the chemical species diffuse and the diffusion and reactions take place everywhere in the spatial domain. When there is degeneracy, showing convergence to equilibrium is more challenging. The case of degenerate diffusion, i.e. one or some chemical species do not diffuse, has been considered for some special systems in \cite{desvillettes2015duality,fellner2018well,einav2020indirect}. To show the convergence to equilibrium in this situation, these works utilised the so-called \textit{indirect diffusion effect}, which, roughly speaking, means that the combination of the diffusion of some of the species and of reversible reactions leads to certain ``diffusion effect'' on non-diffusive species. Extending this theory to general systems still remains as an open problem. The case of degenerate reaction is much less studied, and, up to our knowledge, this has been considered only in the recent work \cite{desvillettes2021exponential}. In \cite{desvillettes2021exponential}, the reversible reaction $2\S_1 \leftrightarrows 2\S_2$ was investigated in the case when the reactions happen only in an open subset of the domain with positive measure. By utilising a technique stemming from controllability theory, namely log convexity, and the regularity of solutions, it was shown that solutions still converge exponentially to the chemical equilibrium. While the method therein is sophisticated, it seems difficult to generalise it to more general reaction networks. In this paper, we use a different approach based on proving directly entropy-entropy dissipation functional inequalities. Thanks to this, we can deal with a much larger class of systems, namely complex balanced systems with arbitrarily high orders of reactions. This approach is also sufficiently robust so that we can deal with various types of degeneracy, for instance when reactions happen in very rough domains, or when both reactions and diffusions are degenerate. \blue{The results in this work significantly extend the literature convergence to equilibrium for chemical reaction networks, cf. \cite{glitzky1996free,Mielke2015,desvillettes2017trend,fellner2018convergence}, to the situation with degenerate reactions. To our knowledge, this is also the first work showing convergence to equilibrium when both reactions and diffusion can be degenerate.}
	
	\subsection{Chemical reaction-diffusion systems}
	Consider $m$ chemical species $\S_1, \ldots, \S_m$ reacting via the following $R$ reactions
	\begin{equation}\label{reactions}
		y_{r,1}\S_1 + \cdots + y_{r,m}\S_m \xrightarrow{k_r(x,t)} y_{r,1}'\S_1 + \cdots + y_{r,m}'\S_m, \quad r = 1,\ldots, R,
	\end{equation}
	where $k_r(x,t)$ are the \textit{reaction rate coefficients} whose value depends on the spatial variable $x\in\Omega$ and time $t\in \R_+$, and $y_{r,i}, y_{r,i}' \in \{0\}\cup [1,\infty)$ are stoichiometric coefficients. Denoting by $y_r = (y_{r,i})_{i=1,\ldots, m}$ and $y_r' = (y_{r,i}')_{i=1,\ldots, m}$, $r=1,\ldots, R$, the vector of stoichiometric coefficients, we can rewrite the reactions in \eqref{reactions} as
	\begin{equation}\label{reactions_short}
		y_r \xrightarrow{k_{r}(x,t)} y_r', \quad r = 1,\ldots, R.
	\end{equation}
	Assume that the reaction system takes place in a bounded vessel $\Omega\subset\R^n$ with Lipschitz boundary $\partial\Omega$. Let $u_i := u_i(x,t)$ be the concentration of $\S_i$ at position $x\in\Omega$ at time $t>0$. Assume that each species diffuses at a different rate. Then one can apply second Fick's law and the law of mass action to obtain the following reaction diffusion system for the vector of concentrations $u = (u_1,\ldots, u_m)$
	\begin{equation}\label{Sys}
		\left\{
		\begin{aligned}
			&\partial_t u_i - \div(D_i(x,t)\na u_i) = R_i(x,t,u):= \sum_{r=1}^Rk_r(x,t)(y_{r,i}' - y_{r,i})u^{y_r}, && x\in\Omega,\\
			&D_i(x,t)\na u_i\cdot \nu = 0, &&x\in\partial\Omega,\\
			&u_i(x,0) = u_{i,0}(x), &&x\in\Omega,
		\end{aligned}
		\right.
	\end{equation}
	for all $i=1,\ldots, m$, where
	\begin{equation*}
		u^{y_r} = \prod_{i=1}^mu_i^{y_{r,i}},
	\end{equation*}
	the diffusion coefficients $D_i: \Omega\times \R_+\to \R^{n\times n}$, $\nu(x)$ is unit outward normal vector at $x\in\partial\Omega$. Here the homogeneous Neumann boundary conditions indicate that the chemical system is isolated. Thanks to
 these conditions, there are possibly a number of conservation laws corresponding to \eqref{Sys}. Indeed, denoting
	\begin{equation*}
		\W = (y_r' - y_r)_{r=1,\ldots, R} \in \R^{m\times R},
	\end{equation*}
	and $K := \dim(\ker(\W^\top))$, we let $q_1, \ldots, q_K$ be the column vectors forming a basis of $\ker(\W^\top)$. Then from the system \eqref{Sys} we have, \textit{formally}, for any $1\le j\le K$
	\begin{equation*}
		\frac{d}{dt} \intO q_j\cdot udx = \blue{\sum_{r=1}^R\intO k_r(x,t)(q_j\cdot(y_r' - y_r))u^{y_r}dx} = 0,
	\end{equation*}
	which lead to $K$ linearly independent conservation laws
	\begin{equation}\label{conservation_laws}
		\intO q_j\cdot u(x,t)dx = \intO q_j\cdot u_0(x)dx, \quad \forall j=1,\ldots, K,
	\end{equation}
	where $u_0 := (u_{1,0},\ldots, u_{m,0})$. By some rescaling, we can assume that $\Omega$ has volume one, i.e. $|\Omega| = 1$, \textbf{which we will assume throughout this paper}.
	
	\subsection{General systems with degenerate reactions}
	
	We consider first the case when (only) the reactions happen in a subdomain of $\Omega$. In order to set up the problem, we use a graph representation of the reaction network \eqref{reactions}. Let $\V = \{y_r, y_r'\}_{r=1,\ldots, R}\subset \R_+^{m}$ be the set of vertices (a set of points in $\R_+^m$). The set of directed edges are the reactions in \eqref{reactions}, $\Ed = \{y_r \to y_r': r= 1,\ldots, R \}$. We remark that for convenience, if a letter, say $y$, denotes the reactant complex in a reaction, then the corresponding letter $y'$ denotes the product complex. Then $\G = (\V,\Ed)$ forms a directed graph. A subset of vertices $\mathsf{U} \subset \V$ is called \textit{strongly connected}, if for any $v_1\ne v_2 \in \mathsf{U}$, there exists a \blue{sequences of vertices} $v_1=: w_1 \to w_2 \to \cdots \to w_r := v_2$, $r\ge 2$, where $w_j \to w_{j+1}\in \Ed$ for all $j=1,\ldots, r-1$. It is noted that in the language of chemical reaction network theory, these strongly connected components are usually called \textit{linkage classes}, see e.g. \cite{anderson2011proof}. A classical result in graph theory implies that $\G$ can be decomposed into strongly connected components. In this paper, we assume the following:
	\begin{equation}\label{A}\tag{A}
		\begin{tabular}{p{13cm}}
			There is no edge between any two different strongly connected components of the graph $\G$.
		\end{tabular}
	\end{equation}
	It is remarked that even though the components are disconnected, they do not possess their own decoupled dynamics since the chemical species can be present in all components (see Figure \ref{fig1}).
	Denote by $s\ge 1$ the number of strongly connected components of $\G$, i.e. $\G$ consists of the components $\C_1,\ldots, \C_s$. Thanks to assumption \eqref{A}, without loss of generality, we can re-label the vertices of $\G$ such that there exist $L_0 = 0 < L_1 < \ldots < L_{s-1} < \blue{L_s = R}$ with the property: for any $1\le l \le s$ the reactions $y_j \to y_j'$ for $L_{l-1}+1 \le j \le L_l$ form the $l$-th component.
	
	\medskip
	A subset $A\subset\mathbb R^n$ is said to satisfy an assumption \eqref{P} if the Poincar\'e-Wirtinger inequality in $A$ holds, i.e. there exists a constant $C_A$ depending only on $A$ such that
	\begin{equation}\label{P}\tag{P}
		\|\na u\|_{L^2(A)}^2 \geq C_A\left\|u - \frac{1}{|A|}\int_{A}u(x)dx\right\|_{L^2(A)}^2 \quad \forall u \in H^1(A).
	\end{equation}
	
	The next assumption is concerning the case when the partial reactions take place in positive measured sets.
	\begin{equation}\label{B}\tag{B}
		\begin{tabular}{p{14cm}}
			For each $l\in \{1,\ldots, s\}$, there is a function $\alpha_l:\Omega\times \R_+ \to \R_+$ such that $k_j(x,t) = \beta_j \alpha_l(x,t)$ for
			some $\beta_j>0$ and for $j=L_{l-1}+1, \ldots, L_l$. Moreover, there exists a subset $\omega_l \subset \Omega$
			with $|\omega_l| > 0$ satisfying (\ref{P}), and a positive number $\underline{\alpha}_l>0$ such that $\alpha_l(x,t)\ge \underline{\alpha}_l$ for a.e. $x\in\omega_l$.
		\end{tabular}
	\end{equation}
	
	Roughly speaking, assumption \eqref{B} means in particular that for each strongly connected component $\C_l$, all of its reaction rate coefficients scale with a function $\alpha_l$. This is important to define a complex balanced equilibrium to \eqref{Sys} (see Definition \ref{def:equilibrium}). The lower bound assumption of $\alpha_l$ means that there is a positively measured set $\omega_l$, which satisfies \eqref{P}, where all reactions of the component $\C_l$ happen. A specific example for \eqref{B} is when the sets $\{x\in\Omega: \alpha_l(x) \ge \underline{\alpha}_l\}$ are open for all $l=1,\ldots, s$. We present in Figure \ref{fig1} (a) an example where \eqref{A} and \eqref{B} are satisfied. Of particular physical relevance is the case when each component consists of a reversible reaction. This happens, for instance, when each of these reversible reactions requires a certain catalysis, which is present only in a subset of the medium, see Figure \ref{fig1} (b) for such a situation.
	
	%	\bao{Physically relevant example: Each component is a reversible reaction, which happens because of some catalysis. Provide a second example in Figure 2.}
	
	%%%%%%%%%%%%%%%%%%%%%%%%%%%%%%%%%%%
	\begin{figure}
		\begin{center}
			\begin{tikzpicture}[scale=1.1, smooth cycle]
				\draw[thick] plot[smooth cycle] coordinates{(1,0.5) (3,0) (5.5, 1) (6.5,2) (5,3.5) (3,4) (0,3)}; % Draw Omega
				
				\draw[thick,draw=none,scale=1.2,fill=green,yshift=1.1cm,xshift=0.15cm]plot[tension=.6] coordinates{(0,0.8) (1.5, -0.3) (3,0.7) (1.5,1.7)}; % Draw omega_1
				
				\draw[thick,draw=none,fill=yellow,yshift=1.2cm,xshift=3.7cm] plot coordinates{(0.5,2) (2.4,1) (1.9,0) (0.5,0)}; % Draw omega_2
				
				\node (A) at (3,0.2) {$\Omega$};
				\node (B) at (2.7,3.3) {$\omega_1$};
				\node (C) at (4.8,3.2) {$\omega_2$};

				\node (a) at (0.77, 2.7) {$\S_1$}; 
				\node (b) at (2.77,2.7) {$\S_2+\S_3$};
				\node (c) at (2.2,1.2) {$2\S_2$};
				\draw[thick,arrows=->] ([xshift =0.5mm]a.east) -- node [above] {\scalebox{.8}[.8]{$k_1(x,t)$}} ([xshift =-0.5mm]b.west);
				\draw[thick,arrows=->] ([yshift=0.5mm]c.west) -- node [left] {\scalebox{.8}[.8]{$k_3(x,t)$}} ([yshift=-0.5mm]a.south);
				\draw[thick,arrows=->] ([xshift =-0.5mm,yshift=-0.5mm]b.south) -- node [right] {\scalebox{.8}[.8]{$k_2(x,t)$}} ([yshift =0.5mm]c.east);
				
				\node (d) at (4.3,2.1) {$2\S_1$};
				\node (e) at (5.8,2.1) {$2\S_3$};
				\draw[thick,arrows=->] ([xshift =0.5mm,yshift=0.4mm]d.east) -- node [above] {\scalebox{.8}[.8]{$k_4(x,t)$}} ([xshift =-0.5mm,yshift=0.4mm]e.west);
				\draw[thick,arrows=->] ([xshift =-0.5mm,yshift=-0.4mm]e.west) -- node [below] {\scalebox{.8}[.8]{$k_5(x,t)$}} ([xshift =0.5mm,yshift=-0.4mm]d.east); 
				%[tension=1] 		coordinates{(0,0) (2,1) (2,0) (1,-1)};
			\end{tikzpicture}
			\begin{tikzpicture}[scale=1.1, smooth cycle]
				\draw[thick] plot[smooth cycle] coordinates{(1,0.5) (3,0) (5.5, 1) (6.5,2) (5,3.5) (3,4) (0,3)}; % Draw Omega
				
				\draw[thick,draw=none,scale=1.2,fill=cyan,yshift=1.1cm,xshift=0.15cm]plot[tension=.6] coordinates{(0,0.8) (1.5, -0.3) (3,0.7) (1.5,1.7)}; % Draw omega_1
				
				\draw[thick,draw=none,fill=yellow,yshift=1.2cm,xshift=3.7cm] plot coordinates{(0.5,2) (2.4,1) (1.9,0) (0.5,0)}; % Draw omega_2
				
				\node (A) at (3,0.2) {$\Omega$};
				\node (B) at (2.7,3.3) {$\omega_1$};
				\node (C) at (4.8,3.2) {$\omega_2$};

				\node (a) at (0.77, 2.2) {$\S_1$}; 
				\node (b) at (2.77,2.2) {$\S_2+\S_4$};
				%\node (c) at (2.2,1.2) {$2\S_2$};
				\draw[thick,arrows=->] ([xshift =0.5mm,yshift=0.4mm]a.east) -- node [above] {\scalebox{.8}[.8]{$k_6(x,t)$}} ([xshift =-0.5mm,yshift=0.4mm]b.west);
				\draw[thick,arrows=->] ([xshift =-0.5mm,yshift=-0.4mm]b.west) -- node [below] {\scalebox{.8}[.8]{$k_7(x,t)$}} ([xshift =0.5mm,yshift=-0.4mm]a.east);
				%\draw[thick,arrows=->] ([yshift=0.5mm]c.west) -- node [left] {\scalebox{.8}[.8]{$k_3(x,t)$}} ([yshift=-0.5mm]a.south);
				%\draw[thick,arrows=->] ([xshift =-0.5mm,yshift=-0.5mm]b.south) -- node [right] {\scalebox{.8}[.8]{$k_2(x,t)$}} ([yshift =0.5mm]c.east);
				
				\node (d) at (4.3,2.1) {$2\S_1$};
				\node (e) at (5.8,2.1) {$2\S_3$};
				\draw[thick,arrows=->] ([xshift =0.5mm,yshift=0.4mm]d.east) -- node [above] {\scalebox{.8}[.8]{$k_4(x,t)$}} ([xshift =-0.5mm,yshift=0.4mm]e.west);
				\draw[thick,arrows=->] ([xshift =-0.5mm,yshift=-0.4mm]e.west) -- node [below] {\scalebox{.8}[.8]{$k_5(x,t)$}} ([xshift =0.5mm,yshift=-0.4mm]d.east); 
				%[tension=1] 		coordinates{(0,0) (2,1) (2,0) (1,-1)};
			\end{tikzpicture}
			%plot               coordinates{(0,0) (1,0) (2,1) (1,2)} -- cycle;
		\end{center}
		\begin{tabular}{cc}
			(a)\hspace*{3.5cm} & \hspace*{3.5cm}(b)
		\end{tabular}
		\caption{Example of a complex balanced network satisfying \eqref{A} and \eqref{B}.\\
			$\bullet$ In (a), there are two strongly connected components $\C_1 = \{\S_1, \S_2 + \S_3, 2\S_2\}$ and $\C_2 = \{2\S_1, 2\S_3\}$ together with corresponding reactions, and the reactions in these components happen in open sets $\omega_1$ and $\omega_2$, respectively. Here for $i\in \{1,2,3\}$,  $k_i(x,t) = \beta_i\alpha_1(x,t)$ with $\alpha_1(x,t)\ge \underline{\alpha}_1 > 0$ in $\omega_1$, and for $j\in\{4,5\}$, $k_j(x,t) = \beta_j\alpha_2(x,t)$ with $\alpha_2(x,t) \ge \underline{\alpha}_2>0$ in $\omega_2$.  		
			Note that the chemicals $\S_1$ and $\S_3$ appear both on $\omega_1$ and $\omega_2$, and therefore the dynamics of the whole system couples the reactions in both of these subdomains.\\
			$\bullet$ In (b) each component has a single reversible reaction. This could be physically relevant, for instance, when the reversible reactions $\S_1 \leftrightarrows \S_2 + \S_4$ and $2\S_1 \leftrightarrows 2\S_3$ require certain catalysts to happen and these catalysts are only present in $\omega_1$ and $\omega_2$ respectively.}\label{fig1} 
	\end{figure}
	%%%%%%%%%%%%%%%%%%%%%%%%%%%%%%%%%%%
	
	\begin{definition}\label{def:equilibrium}
		A spatially homogeneous state $u_\infty = (u_{1,\infty},\ldots, u_{m,\infty})\in \R_+^m$ is called a complex balanced equilibrium (CBE for short) for the system \eqref{Sys} if for any $l=1,\ldots, s$ and any $y\in \C_l$, the following equality holds
		\begin{equation}\label{equilibrium_cond}
			\blue{u_\infty^{y}\underset{y_j=y}{\sum_{L_{l-1}+1\le j\le L_l}}\beta_j =} \underset{y_j=y}{\sum_{L_{l-1}+1\le j\le L_l}}\beta_j u_\infty^{y_j} = \underset{y_k'=y}{\sum_{L_{l-1}+1\le k\le L_l}}\beta_ku_\infty^{y_k}.
		\end{equation}
	\end{definition}
	Thanks to assumptions \eqref{A} and \eqref{B}, it follows that $R_i(u_\infty) = 0$ for all $i=1,\ldots, m$, which means that $u_\infty$ a spatially homogeneous steady state of system \eqref{Sys}.
	\begin{remark}
		Consider the reversible reaction $\S_1 \underset{k_1(x)}{\overset{k_2(x)}{\leftrightarrows}} \S_2$, where the reaction rate coefficients $k_1, k_2$ depend only on $x\in\Omega$, which results in the reaction-diffusion system
		\begin{equation}\label{2x2}
			\begin{cases}
				\partial_t u_1 - \na\cdot(D_1(x,t)\nabla u_1) = -k_1(x)u_1 + k_2(x)u_2,\\
				\partial_t u_2 - \na\cdot(D_2(x,t)\nabla u_2) = k_1(x)u_1 - k_2(x)u_2,\\
				D_i(x,t)\nabla u_i \cdot \nu = 0,\quad i=1,2,\\
				u_{i}(x,0) = u_{i,0}(x),\quad i=1,2.
			\end{cases}
		\end{equation}
		When $k_1$ and $k_2$ are strictly positive constants, then the network is obviously complex balanced and there is a unique positive equilibrium for each positive initial total mass. However, if $\text{supp}(k_1)\cap \text{supp}(k_2) = \emptyset$, then \eqref{B} is violated and the only spatially homogeneous steady state of \eqref{2x2} is the zero state $(0,0)$.
		
		%		\blue{Note for later: maybe it's interesting to have a look at the case where we don't have spatially homogeneous chemical equilibrium, and then investigate its large time behaviour. Probably first look at the numerical simulations!}
	\end{remark}
	It is also remarked that in the case when the functions $k_r$ are constants, the CBE $u_\infty$ in Definition \ref{def:equilibrium} coincides with the classical definition in chemical reaction network theory, see e.g. \cite{feinberg2019foundations}. It can be also seen from Definition \ref{def:equilibrium} that the set of CBE forms a manifold (possibly with singularities) in $\R_+^m$. To uniquely determine $u_\infty$, we need the conservation laws \eqref{conservation_laws}.
	
	\begin{lemma}\cite{feinberg2019foundations}\label{lem:equilibrium}
		Assume assumptions \eqref{A} and \eqref{B}. If there exists a CBE $u_\infty$ as in Definition \ref{def:equilibrium}, then any spatially homogeneous steady state of \eqref{Sys} is complex balanced. Moreover, for any non-negative initial data $u_0\in L^1_+(\Omega)^m$, there exists a unique \textbf{strictly positive} CBE $u_\infty \in (0,\infty)^m$ satisfying \eqref{equilibrium_cond} and the conservation laws
		\begin{equation*}
			q_j\cdot u_\infty = \intO q_j\cdot u_0(x)dx, \quad \forall j=1,\ldots, K
		\end{equation*}
		where $(q_j)_{j=1,\ldots, K}$ is defined in \eqref{conservation_laws}. It is remarked that there \blue{might exist} (possibly infinitely) many \textbf{boundary} CBE which lie on $\partial\R_+^m$.
	\end{lemma}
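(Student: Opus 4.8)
The assertion is the space–dependent version of the Horn--Jackson / Feinberg theory of complex balanced mass-action systems (cf. \cite{feinberg2019foundations}), and I would organize the argument in three steps. \emph{Step 1 (reduction to a constant-coefficient network).} A spatially homogeneous steady state $c=(c_1,\dots,c_m)$ of \eqref{Sys} must satisfy $R_i(x,t,c)=0$ for a.e. $x\in\Omega$ and all $t>0$. Writing $R_i(x,t,c)=\sum_{l=1}^s\alpha_l(x,t)\,F_{l,i}(c)$ with $F_{l,i}(c)=\sum_{j=L_{l-1}+1}^{L_l}\beta_j(y_{j,i}'-y_{j,i})c^{y_j}$, I would use assumption \eqref{B} — the factorization $k_j=\beta_j\alpha_l$ together with $\alpha_l\ge\underline{\alpha}_l>0$ on $\omega_l$ — to deduce that each component flux vanishes separately, $F_l(c)=0$ for every $l=1,\dots,s$. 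Thus $c$ is a steady state of the constant-coefficient mass-action network with rate constants $(\beta_j)$, whose complexes, by \eqref{A}, split into the strongly connected linkage classes $\C_1,\dots,\C_s$; in particular, grouping terms by complex in \eqref{equilibrium_cond} shows that the given CBE $u_\infty$ is itself such a steady state, consistent with the remark after Definition \ref{def:equilibrium}.

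\emph{Step 2 (positive steady states are complex balanced and form an exponential manifold).} Set $F(c):=\sum_{j=1}^R\beta_jc^{y_j}(y_j'-y_j)=\sum_l F_l(c)$, which vanishes by Step 1. With $a_j:=\beta_j u_\infty^{y_j}$ and $W_y:=(c/u_\infty)^y$ for a positive steady state $c$, one has $\ln W_y=y\cdot(\ln c-\ln u_\infty)$ and $\beta_jc^{y_j}=a_jW_{y_j}$, so that, using the elementary inequality $a\ln(b/a)\le b-a$ for $a,b>0$,
\begin{equation*}
(\ln c-\ln u_\infty)\cdot F(c)=\sum_{j}a_jW_{y_j}\ln\frac{W_{y_j'}}{W_{y_j}}\le\sum_{j}a_j\bigl(W_{y_j'}-W_{y_j}\bigr)=\sum_{y\in\V}W_y\Bigl(\sum_{y_k'=y}a_k-\sum_{y_j=y}a_j\Bigr)=0,
\end{equation*}
the last equality being precisely the complex balance \eqref{equilibrium_cond} of $u_\infty$. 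Since $F(c)=0$, the left-hand side vanishes, which forces equality in $a\ln(b/a)\le b-a$ on every edge, i.e. $W_{y_j}=W_{y_j'}$; as each $\C_l$ is strongly connected, $W_y$ is constant on $\C_l$. This yields at once (i) $(y_j'-y_j)\cdot(\ln c-\ln u_\infty)=0$ for every $j$, i.e. $\ln c-\ln u_\infty\in\ker(\W^\top)=\mathrm{span}\{q_1,\dots,q_K\}$, and (ii), inserting that $W_y$ is constant on $\C_l$ into \eqref{equilibrium_cond}, that $c$ is complex balanced. Hence the positive steady states of \eqref{Sys} are exactly $\{\,u_\infty\,e^{\sum_{j=1}^K\lambda_jq_j}:\lambda\in\R^K\,\}$ (componentwise exponential). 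A boundary steady state is handled by passing to the sub-network of species with $c_i>0$ and the complexes they support, on which $c$ is positive, thereby reducing to the previous case (cf. \cite{feinberg2019foundations}).

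\emph{Step 3 (uniqueness of a strictly positive equilibrium per compatibility class).} Fix $u_0\in L^1_+(\Omega)^m$ and set $M_j:=\intO q_j\cdot u_0\,dx$. Using $|\Omega|=1$ and the relative entropy $G(c):=\sum_i\bigl(c_i\ln(c_i/u_{i,\infty})-c_i+u_{i,\infty}\bigr)$, whose Hessian $\mathrm{diag}(1/c_i)$ is positive definite, I would minimize $G$ over the affine slice $\mathcal A:=\{\,c\ge0:\ q_j\cdot c=M_j,\ j=1,\dots,K\,\}$. Since $G$ is coercive and $\ln c-\ln u_\infty$ diverges to $-\infty$ as any $c_i\to0$, a minimizer exists and lies in the interior whenever $\mathcal A\cap(0,\infty)^m\neq\emptyset$, and strict convexity makes it unique. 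At an interior minimizer $c$ the Lagrange condition reads $\nabla G(c)=\ln c-\ln u_\infty\perp\{\,v:q_j\cdot v=0\ \forall j\,\}=\mathrm{range}(\W)$, i.e. $\ln c-\ln u_\infty\in\mathrm{span}\{q_j\}$, so by Step 2 $c$ is a strictly positive CBE lying in $\mathcal A$; conversely any strictly positive CBE in $\mathcal A$ is such a critical point, hence \emph{the} minimizer. Non-emptiness of $\mathcal A\cap(0,\infty)^m$ (and thus the stated conclusion) is Birch's theorem: the toric manifold of Step 2 meets the compatibility class determined by $u_0$ in a single strictly positive point, and this is where the existence of the positive CBE $u_\infty$ is used. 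Finally, the boundary CBE need no further argument — they are the solutions of \eqref{equilibrium_cond} on $\partial\R_+^m$, whose set is network-dependent; e.g. for $2\S_1\leftrightarrows2\S_2$ every state $(c_1,0)$ and $(0,c_2)$ is such a CBE.

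I expect the genuinely delicate points to be: (a) the reduction in Step 1 — that a space-dependent steady state is forced to annihilate each component flux $F_l$ — which is exactly where assumption \eqref{B} (the scaling $k_j=\beta_j\alpha_l$ and positivity on $\omega_l$) is essential; and (b) the non-emptiness/Birch statement underlying Step 3, which is where the hypothesis that a strictly positive CBE exists enters. The Lyapunov computation in Step 2 is classical and presents no difficulty.
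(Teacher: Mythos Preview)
The paper does not prove this lemma; it is stated with the citation \cite{feinberg2019foundations} and immediately used. Your proposal therefore supplies what the paper omits, and the route you take --- the Horn--Jackson Lyapunov computation followed by strict-convex entropy minimization over the compatibility class (Birch's theorem) --- is precisely the classical argument from that reference.

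One imprecision in Step~1: assumption \eqref{B} alone does not force $F_l(c)=0$ separately for each $l$, since it imposes no linear-independence on the functions $\alpha_l$ (if all $\alpha_l\equiv1$ you obtain only $\sum_l F_l(c)=0$). This does no damage, however, because Step~2 only needs $c$ to annihilate a \emph{single} constant-coefficient formation-rate vector for which $u_\infty$ is complex balanced. Fix any $(x_0,t_0)$ and set $\tilde\beta_j:=\alpha_{l(j)}(x_0,t_0)\beta_j$; then $0=R(x_0,t_0,c)=\sum_j\tilde\beta_j(y_j'-y_j)c^{y_j}$, and since \eqref{equilibrium_cond} is invariant under multiplying all $\beta_j$ within a component $\C_l$ by a common positive constant, $u_\infty$ is complex balanced for the rates $\tilde\beta_j$ as well. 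Your computation in Step~2 then runs verbatim with $a_j:=\tilde\beta_j u_\infty^{y_j}$; varying $(x_0,t_0)$ over the $\omega_l$ recovers constancy of $W$ on every $\C_l$. Your identification of Birch's theorem as the genuine content of Step~3 is accurate --- in particular, the one-line claim that the entropy minimizer lies in the interior (``$\ln c-\ln u_\infty$ diverges'') is really a placeholder for that nontrivial fact, since $G$ itself stays finite as $c_i\to0$.
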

	Due to Lemma \ref{lem:equilibrium} we will refer to the strictly positive CBE simply as CBE, and the boundary CBE as boundary equilibria. Our first main result of this paper is the exponential convergence to equilibrium for the system \eqref{Sys} under assumptions \eqref{A}, \eqref{B}, and the fact that there are no boundary equilibria.
	
	\begin{theorem}\label{thm:main1}
		Assume the following
		\begin{itemize}
			\item[(i)] \eqref{A} and \eqref{B};
			\item[(ii)] the diffusion matrices are symmetric and bounded, i.e. $D_i\in L^{\infty}_{\text{loc}}(\R_+;L^{\infty}(\Omega; \R_{\text{\normalfont sym}}^{n\times n}))$, and 
			\begin{equation*}
				\xi^\top D_i(x,t)\xi \ge \underline{D}_i|\xi|^2, \quad \forall \xi \in \R^m, \text{ a.e. } (x,t)\in\Omega\times \R_+
			\end{equation*}
			for some $\underline{D}_i>0$, for all $i=1,\ldots, m$;
			\item[(iii)] there exists a CBE to \eqref{Sys} as defined in Definition \ref{def:equilibrium};
			\item[(iv)] there are no boundary equilibria.
		\end{itemize}
		Then for any non-negative initial data $u_0 \in L^1_+(\Omega)^m$ such that $\sum_{i=1}^{m}\intO u_{i,0}|\log u_{i,0}|dx < +\infty$, there exists a global renormalised solution to \eqref{Sys} as in Definition \ref{def:renormalised_solution} below. Moreover, all renormalised solutions converge exponentially to CBE with an exponential rate, i.e.
		\begin{equation*}
			\sum_{i=1}^m\|u_i(t) - u_{i,\infty}\|_{L^1(\Omega)} \le Ce^{-\lambda t}, \quad \forall t\ge 0.
		\end{equation*}
	\end{theorem}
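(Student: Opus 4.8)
The plan is to run the entropy method with the relative free energy
\[
\mathcal{E}[u\,|\,u_\infty] \;=\; \sum_{i=1}^m\intO\Big(u_i\log\frac{u_i}{u_{i,\infty}} - u_i + u_{i,\infty}\Big)\,dx ,
\]
which is finite at $t=0$ under the assumed bound on $u_0$. Along renormalised solutions one has the weak entropy--dissipation inequality
\[
\mathcal{E}[u(t)\,|\,u_\infty] + \int_s^t\mathcal{D}[u(\tau)]\,d\tau \;\le\; \mathcal{E}[u(s)\,|\,u_\infty]\qquad\text{for a.e. } s\ge0,\ \text{including }s=0,
\]
where $\mathcal{D}[u] = \sum_{i}\intO D_i\frac{|\na u_i|^2}{u_i}\,dx + \sum_{r}\intO k_r\,u^{y_r}\log\frac{u^{y_r}/u_\infty^{y_r}}{u^{y_r'}/u_\infty^{y_r'}}\,dx$. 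The heart of the matter is the entropy--entropy-dissipation (EED) functional inequality $\mathcal{D}[u]\ge\lambda\,\mathcal{E}[u\,|\,u_\infty]$ for a fixed $\lambda>0$, valid for \emph{every} nonnegative $u$ with finite entropy satisfying the conservation laws \eqref{conservation_laws} with data $\intO q_j\cdot u_0\,dx = q_j\cdot u_\infty$. Granting it, Gr\"onwall's lemma gives $\mathcal{E}[u(t)\,|\,u_\infty]\le e^{-\lambda t}\,\mathcal{E}[u_0\,|\,u_\infty]$, and the Csisz\'ar--Kullback--Pinsker inequality yields $\sum_i\|u_i(t)-u_{i,\infty}\|_{L^1(\Omega)}^2 \lesssim \mathcal{E}[u(t)\,|\,u_\infty]$, whence the stated exponential convergence (with rate $\lambda/2$). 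Global existence of a renormalised solution obeying this entropy inequality is taken from the literature on reaction--diffusion systems with entropy structure; the novelty is the EED inequality in the degenerate-reaction regime.

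First I would bound $\mathcal{D}$ from below. The diffusion part is $\ge 4\underline{D}\sum_i\intO|\na\sqrt{u_i}|^2\,dx$ with $\underline{D}=\min_i\underline{D}_i$. For the reaction part, assumption \eqref{B} factors $k_j=\beta_j\alpha_l$ on the $l$-th strongly connected component $\C_l$ and permits restricting integration to $\omega_l$, where $\alpha_l\ge\underline{\alpha}_l$; the decisive observation is that, since the complex-balance relation \eqref{equilibrium_cond} holds componentwise, each $\C_l$ considered in isolation is a complex balanced network with the same equilibrium $u_\infty$, so its dissipation density
\[
D_l(u) \;:=\; \sum_{L_{l-1}<j\le L_l}\beta_j\,u^{y_j}\log\frac{u^{y_j}/u_\infty^{y_j}}{u^{y_j'}/u_\infty^{y_j'}}
\]
is nonnegative \emph{pointwise} (by $a\log(a/b)\ge a-b$ and the complex-balance cancellation $\sum_j\beta_j u_\infty^{y_j}\big(u^{y_j}/u_\infty^{y_j}-u^{y_j'}/u_\infty^{y_j'}\big)=0$). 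Hence $\mathcal{D}[u]\ge\mathcal{D}_{\mathrm{low}}[u]:=4\underline{D}\sum_i\intO|\na\sqrt{u_i}|^2\,dx + \sum_{l=1}^s\underline{\alpha}_l\int_{\omega_l}D_l(u)\,dx$, and it is enough to prove $\mathcal{D}_{\mathrm{low}}\ge\lambda\,\mathcal{E}$.

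For this I would split $\mathcal{E}[u\,|\,u_\infty]=\mathcal{E}_1+\mathcal{E}_2$, with $\mathcal{E}_1=\sum_i\intO u_i\log\frac{u_i}{\bar u_i}\,dx$ ($\bar u_i:=\intO u_i\,dx$, using $|\Omega|=1$) the spatial entropy and $\mathcal{E}_2=\sum_i\big(\bar u_i\log\frac{\bar u_i}{u_{i,\infty}}-\bar u_i+u_{i,\infty}\big)$ the finite-dimensional relative entropy of the averages; the splitting is exact since $\intO u_i\,dx=\bar u_i$. The spatial part is absorbed by the diffusion dissipation through the logarithmic Sobolev inequality on the bounded Lipschitz domain $\Omega$: $\mathcal{E}_1\lesssim\sum_i\intO|\na\sqrt{u_i}|^2\,dx$. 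The finite-dimensional part $\mathcal{E}_2$ depends only on $\bar u$, which satisfies the conservation laws; by the finite-dimensional complex-balanced EED inequality (cf. \cite{desvillettes2017trend}) combined with hypothesis (iv) --- no boundary equilibria, which makes $u_\infty$ the unique positive state with $\sum_l D_l=0$ compatible with the conservation laws (cf. Lemma \ref{lem:equilibrium}) --- one has $\mathcal{E}_2\lesssim\sum_l D_l(\bar u)$, so it remains to bound $\sum_l D_l(\bar u)\lesssim\mathcal{D}_{\mathrm{low}}[u]$. I would do this by chaining: (i) the Poincar\'e inequality on $\Omega$, which keeps $u_i$ close to $\bar u_i$ at the cost of the diffusion dissipation, so in particular $\frac{1}{|\omega_l|}\int_{\omega_l}u_i\,dx$ is close to $\bar u_i$; (ii) the Poincar\'e inequality on $\omega_l$ (assumption \eqref{P}), keeping $u_i$ close on $\omega_l$ to its $\omega_l$-average; (iii) the smallness of $\int_{\omega_l}D_l(u)\,dx$, which via $a\log(a/b)-a+b\ge(\sqrt a-\sqrt b)^2$ forces the $\omega_l$-averaged state to be nearly a reaction equilibrium of $\C_l$.

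The genuine difficulty lies in steps (i)--(iii) in the absence of any $L^\infty$ bound on $u$: one must pass between the nonlinear densities $u^{y_j}$ and the powers $\bar u^{y_j}$ of the averages knowing only that $u$ has finite entropy. I would handle this by the standard truncation dichotomy --- on $\{u_i\le L\}$ the logarithm and $D_l$ are Lipschitz in $u$, so linearisation around $\bar u$ is quantitatively valid, while on $\{u_i>L\}$ the measure is small by Chebyshev and the entropy-weighted contribution is controlled by the finite-entropy bound --- optimising $L$ at the end; an alternative is a compactness--contradiction argument normalising $\mathcal{E}[u\,|\,u_\infty]=1$. I expect this nonlinear-to-average transfer, together with correctly propagating the near-equilibrium information out of the small reaction sets $\omega_l$ onto all of $\Omega$ through the diffusion dissipation, to be the main obstacle; the remaining steps adapt by-now classical entropy-method arguments to the degenerate geometry.
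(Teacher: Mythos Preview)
Your proposal is correct and follows essentially the same route as the paper: weak entropy--dissipation inequality for renormalised solutions, the additive splitting $\E=\E_1+\E_2$, LSI for $\E_1$, a finite-dimensional complex-balanced inequality for $\E_2$ (the paper invokes \cite{fellner2018convergence}), and a Poincar\'e-plus-truncation argument to transfer the reaction control from each $\omega_l$ to $\Omega$. The one refinement worth flagging is that the paper runs the whole chaining at the level of $U_i=\sqrt{u_i}$ rather than $u_i$: Poincar\'e on $\Omega$ and on $\omega_l$ controls $U_i-[U_i]_\Omega$ and $U_i-[U_i]_{\omega_l}$, the reaction term is bounded below by $(U^{y_j}/U_\infty^{y_j}-U^{y_j'}/U_\infty^{y_j'})^2$, and the truncation is on $\{|U_i-[U_i]_{\omega_l}|\le\mathfrak m\}$ rather than on $\{u_i\le L\}$; since the averages $[U_i]$ are automatically bounded by the conserved $L^1$ mass, this cleanly bypasses the difficulty you identify of comparing $u^{y_j}$ with $\bar u^{y_j}$ without an $L^\infty$ bound.
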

	
	It is emphasised that, in general, the global existence theory for \eqref{Sys} is highly non-trivial due to the possible arbitrarily high orders of the nonlinearities, see e.g. \cite{pierre2010global} for an extensive survey. If \eqref{Sys} possesses an entropic dissipation structure, which is a consequence of having a CBE, the only known concept of global solution to \eqref{Sys} is \textit{renormalised solutions}, see e.g. \cite{fischer2015global}, which has minimal regularity, which in turns makes the study of their dynamics highly challenging. In order to prove Theorem \ref{thm:main1}, we use the \blue{entropy method, which was widely used in kinetic theory  and other fields in the 90s (cf. for example \cite{CV} ), and later extended to chemical reaction-diffusion systems \cite{Desvillettes2006,desvillettes2007entropy,Mielke2015,mielke2018convergence,Haskovec2018}. An important feature of this method is that it} relies on functional inequalities and consequently requires minimal regularity of solutions, see e.g. \cite{fellner2018convergence}. This is in contrast to that of \cite{desvillettes2021exponential} and therefore it allows us to show the equilibration of \textit{all} renormalised solutions. More precisely, we consider the following relative entropy
	\begin{equation*}
		\E(u|\uinf)= \sum_{i=1}^{m}\int_{\Omega}\bra{u_i\log\frac{u_i}{u_{i,\infty}} - u_i + u_{i,\infty}}dx,
	\end{equation*}
	and the corresponding entropy dissipation
	\begin{equation*}
		\D(u) = \sum_{i=1}^{m}\int_{\Omega}D_i(x,t)\nabla u_i\cdot\frac{\nabla u_i}{u_i}dx + \sum_{r=1}^{R}\int_{\Omega}k_r(x,t)\uinf^{y_{r}}\Psi\bra{\frac{u^{y_r}}{\uinf^{y_r}}; \frac{u^{y_r'}}{\uinf^{y_r'}}}dx,
	\end{equation*}
	where the function $\Psi$ is defined as
	\begin{equation*}
		\Psi(w;z) = w\log\frac wz - w + z.
	\end{equation*}
	\textit{Formally}, one expects the entropy-entropy dissipation law, see \cite[Proposition 2.1]{desvillettes2017trend}
	\begin{equation}\label{eedl}
		\frac{d}{dt}\E(u|u_\infty) = -\D(u).
	\end{equation}
	The cornerstone of the entropy method is to show the following functional inequality
	\begin{equation*} 
		\boxed{\D(u)\gtrsim \E(u|u_\infty)} \quad \forall u: \Omega \to \R_+^m \text{ satisfying the conservation laws } \eqref{conservation_laws}.
	\end{equation*}
	To overcome the difficulty stemming from degeneracy of the reactions, 
	our key idea is to control the reaction terms in the entropy dissipation by their partial averages in corresponding subdomains where reactions happen, and then to estimate the differences by using the diffusion of all species.

	\subsection{A specific case of even more degenerate situations}
	
	In the proof of Theorem \ref{thm:main1}, it is of importance that the diffusion is non-degenerate and the reaction happens in a subdomain which has certain regularity, e.g. Lipschitz boundary, or contains an open domain. The latter allows us to apply the Poincar\'e inequality \eqref{P} in (a subset of) the subdomain, which then combines with the reaction to drive the trajectory eventually to the spatially homogeneous equilibrium. Due to the low regularity of renormalised solutions, relaxing or weakening these assumptions seem difficult. However, if the solution is known to be bounded uniformly in time, it might be possible to handle degenerate diffusion as well as reaction in much rougher subdomains, namely subsets of $\Omega$ which are only {\it measurable} with positive measure. A key idea in these situations is that using the uniform boundedness of solutions, we can estimate many quantities, e.g. the relative entropy, \textit{pointwise} rather just through integrals. We illustrate this by studying the following reversible reactions
	\begin{equation*} 
		\S_1 \underset{k_1(x,t)}{\overset{k_1(x,t)}{\leftrightarrows}} 2\S_2, \quad \S_2 \underset{k_2(x,t)}{\overset{k_2(x,t)}{\leftrightarrows}} 2\S_3,
	\end{equation*}
	which result in the reaction-diffusion system
	\begin{equation}\label{special_sys}
		\begin{cases}
			{\partial}_{t}u_{1}-\div(d_1(x,t)\na u_1)=k_{1}(x,t)\left(  u_{2}^{2}-u_{1}\right)
			\text{ ,} & \text{in}~\Omega\times\R_+  \text{,}\\
			{\partial}_{t}u_{2}-\div(d_2(x,t)\na u_2)=-2k_{1}(x,t)\left(  u_{2}^{2}-u_{1}\right)
			+k_{2}(x,t)\left(  u_{3}^{2}-u_{2}\right), & \text{in}~\Omega
			\times\R_+,\\
			{\partial}_{t}u_{3}-\div\left(d_{3}(x,t)\nabla u_{3}\right)  =-2k_{2}(x,t)%
			\left(  u_{3}^{2}-u_{2}\right), & \text{in}~\Omega\times\R_+,\\
			d_1(x,t)\nabla u_1 \cdot \nu = d_2(x,t)\nabla u_2 \cdot \nu = d_3(x,t)\nabla u_3 \cdot \nu = 0, & \text{on}~\partial\Omega\times\R_+,\\
			u_{1}\left(  \cdot,0\right)  =u_{1,0}, \quad u_{2}\left(  \cdot,0\right)
			=u_{2,0},\quad  u_{3}\left(  \cdot,0\right)  =u_{3,0}, & \text{in}~\Omega.%
		\end{cases}
	\end{equation}
	Here $0\le k_{1}, k_{2}\in
	L^{\infty}\left(  \Omega\times\left(  0,+\infty\right)  \right) $ are reaction rate coefficients. It is easy to check that solutions to \eqref{special_sys} satisfy the conservation law
	\begin{equation}\label{conservation_special}
		\int_{\Omega}(4u_1(x,t) + 2u_2(x,t) + u_3(x,t))dx = \int_{\Omega}(4u_{1,0}(x) + 2u_{2,0}(x) + u_{3,0}(x))dx =: M,
	\end{equation}
	for all $t$ where the solutions exist. It is easy to see that for any positive initial mass $M$ defined in \eqref{conservation_special} there exists a unique strictly positive equilibrium $u_{\infty} = (u_{1,\infty}, u_{2,\infty}, u_{3,\infty})$ which solves
	\begin{equation}\label{equi_sys_special}
		\begin{cases}
			u_{2,\infty}^2 = u_{1,\infty}, \\
			u_{3,\infty}^2 = u_{2,\infty},\\
			4u_{1,\infty} + 2u_{2,\infty} + u_{3,\infty} = M,
		\end{cases}
	\end{equation}
	since $x \mapsto 4x^4 + 2 x^2 + x$ is strictly increasing on $\R_+$.
	
	\medskip
	Let $\omega_1$ and $\omega_2$ be non-empty subsets of $\Omega$. 
	To study the convergence to equilibrium for \eqref{special_sys}, we assume that there is some $\kappa>0$ such that
	\begin{equation}\label{k1k2}
		\begin{aligned}
			k_{1}\left(  x,t\right)  \geq \kappa \quad \forall (x,t)\in\omega_{1}\times\R_+,\\
			k_{2}\left(  x,t\right)  \geq \kappa \quad \forall (x,t)\in\omega_{2}\times\R_+.
		\end{aligned}
	\end{equation}
	For the (scalar) diffusion coefficients $d_i$, we assume that
	\begin{equation}\label{D1D2D3}
		d_i \in L_{\text{loc}}^\infty(\R_+;L^{\infty}(\Omega)), \quad i=1,2,3,
	\end{equation}
	and that there is some $\delta>0$ such that
	\begin{equation}\label{D1D2}
		d_1(x,t) \ge \delta \quad \text{and} \quad d_2(x,t) \ge \delta, \quad \forall (x,t)\in\Omega\times\R_+.
	\end{equation}
	We also assume that there exists $\delta_0\ge 0$ with
	\begin{equation}\label{D3}
		\begin{aligned}
			d_3(x,t) &\geq \delta_0, && \text{a.e. } (x,t)\in\Omega\times\R_+.
			%		d_3(x,t) &= 0, && \forall (x,t)\in \Omega\backslash\varpi\times \R_+.
		\end{aligned}
	\end{equation}
	
	In the first case, we consider $\delta_0>0$, meaning that $\S_1, \S_2, \S_3$ have full diffusion, but the sets $\omega_1$ and $\omega_2$ where reactions happen are \textit{only measurable} with positive measures.
	
	\begin{theorem}\label{thm:measurable}
		Assume \eqref{k1k2}, \eqref{D1D2D3}, \eqref{D1D2} and \eqref{D3} with $\delta_0>0$ and $\omega_1, \omega_2$ are {\normalfont measurable sets} with positive measures. Then for any non-negative, bounded initial data $u_{0} \in L^{\infty}_+(\Omega)^3$, there exists a unique non-negative, weak solution to \eqref{special_sys}, which is bounded uniformly in time, i.e.
		\begin{equation}\label{sup-norm-bound-1}
			\sup_{t\ge 0}\sup_{i=1,\ldots, 3}\|u_i(t)\|_{L^{\infty}(\Omega)} \le \mathcal{C}_0<+\infty.
		\end{equation}
		Moreover, this solution converges exponentially fast, i.e. there are {\normalfont explicitly computable} constants $C, \lambda>0$ such that
		\begin{equation*} 
			\sum_{i=1}^3\|u_i(t) - u_{i,\infty}\|_{\LO{1}}^2 \le Ce^{-\lambda t}, \quad \forall t>0,
		\end{equation*} 
		where $u_\infty$ solves \eqref{equi_sys_special}.
% with $M$ is in \eqref{conservation_special}.
	\end{theorem}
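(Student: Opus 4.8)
The plan is to combine a classical well-posedness/boundedness argument for the special system \eqref{special_sys} with an entropy-type functional inequality adapted to the \emph{measurable} (rather than open) reaction subdomains, exploiting the uniform $L^\infty$ bound to work pointwise. First, I would establish global existence, uniqueness and the uniform-in-time bound \eqref{sup-norm-bound-1}. Since the reaction has a triangular/mass-control structure — the nonlinearities are quadratic and the conservation law \eqref{conservation_special} gives an $L^1$ bound, while the entropy law \eqref{eedl} provides an $L^1\log L^1$ estimate — one can bootstrap via the duality method of Pierre (and the $L^p$-energy method, cf. \cite{pierre2010global}) together with the fact that the right-hand sides are bounded by $C(1+u_1+u_2^2+u_3^2)$ and the system dissipates a Lyapunov functional; for bounded initial data this yields bounded weak solutions. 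The key point for the boundedness constant $\mathcal C_0$ is that it depends only on $M$, the $L^\infty$-norm of the data, the $L^\infty$-norms of $k_i,d_i$, and $\delta$: once the solution is bounded one is in the classical setting and uniqueness follows from Lipschitz estimates on a bounded invariant region.

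Next, the heart of the proof is the entropy--entropy-dissipation estimate $\D(u)\ge c\,\E(u|u_\infty)$, but now $\D(u)$ only controls $\int_{\omega_1}k_1(u_2^2-u_1)^2$-type terms and $\int_{\omega_2}k_2(u_3^2-u_2)^2$-type terms on the measurable sets $\omega_1,\omega_2$. I would proceed as in the strategy sketched after Theorem \ref{thm:main1}: (1) use \eqref{D1D2} and the full Poincaré inequality on $\Omega$ to bound $\|\na u_i\|_{L^2(\Omega)}^2$ from below by $\|u_i-\bar u_i\|_{L^2(\Omega)}^2$ for $i=1,2$ (and, using $\delta_0>0$, also for $i=3$), where $\bar u_i=\into u_i$; (2) using the uniform bound $\mathcal C_0$, replace the ``reaction distance'' $\Psi$-terms by the algebraic squares $(u_2^2-u_1)^2$ and $(u_3^2-u_2)^2$ up to multiplicative constants depending on $\mathcal C_0$ and $u_\infty$; (3) transfer the reaction estimate from $\omega_i$ to all of $\Omega$: for a.e.\ $x\in\omega_1$ one has $|u_2(x)^2-u_1(x)|$ controlled, and for general $x\in\Omega$ one writes $u_i(x)=\bar u_i+(u_i(x)-\bar u_i)$, so
\begin{equation*}
\into_{\omega_1}(u_2^2-u_1)^2\,dx \ge |\omega_1|\,(\bar u_2^{\,2}-\bar u_1)^2 - C(\mathcal C_0)\big(\|u_1-\bar u_1\|_{L^2(\Omega)}^2+\|u_2-\bar u_2\|_{L^2(\Omega)}^2\big),
\end{equation*}
the last two terms being absorbed by the diffusion terms from step (1); (4) the resulting lower bound is $c\big(\|u_1-\bar u_1\|^2+\|u_2-\bar u_2\|^2+\|u_3-\bar u_3\|^2 + (\bar u_2^{\,2}-\bar u_1)^2+(\bar u_3^{\,2}-\bar u_2)^2\big)$, and it remains a finite-dimensional inequality — comparing the averaged vector $(\bar u_1,\bar u_2,\bar u_3)$, which satisfies the conservation law \eqref{conservation_special}, to the unique equilibrium $u_\infty$ solving \eqref{equi_sys_special} — together with a Csiszár–Kullback–Pinsker-type inequality to close $\E(u|u_\infty)\le C\big(\sum_i\|u_i-\bar u_i\|^2 + |\bar u-u_\infty|^2\big)$. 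This finite-dimensional step uses that $x\mapsto 4x^4+2x^2+x$ is strictly increasing (as already noted) so that $u_\infty$ is the unique zero of the relevant polynomial constraint, and a compactness/continuity argument on the compact set $\{|\bar u|\le 3\mathcal C_0\}$.

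Finally, I would conclude by the standard Gronwall argument: \eqref{eedl} gives $\tfrac{d}{dt}\E(u|u_\infty)=-\D(u)\le -c\,\E(u|u_\infty)$, hence $\E(u(t)|u_\infty)\le e^{-ct}\E(u_0|u_\infty)$, and then the Csiszár–Kullback–Pinsker inequality $\sum_i\|u_i-u_{i,\infty}\|_{L^1(\Omega)}^2\le C_{\mathrm{CPK}}\,\E(u|u_\infty)$ yields the claimed exponential decay with $\lambda=c$ and $C$ explicit in terms of $\mathcal C_0, M, \kappa,\delta,\delta_0$ and $|\omega_1|,|\omega_2|$. I expect the main obstacle to be step (3): on a merely measurable set $\omega_i$ one cannot use a Poincaré inequality on $\omega_i$ itself, so the transfer of information from $\omega_i$ to $\Omega$ must go entirely through the decomposition into average plus oscillation, and the oscillation part must be controlled solely by the \emph{global} diffusion terms — this forces the use of $\delta_0>0$ for the third species and of the uniform $L^\infty$ bound to linearise the quadratic reaction terms with a constant that does not blow up. Making the constants genuinely explicit (as claimed in the statement) also requires tracking the dependence through the boundedness constant $\mathcal C_0$, which is the one nontrivial place where explicitness is not automatic.
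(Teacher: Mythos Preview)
Your proposal is correct in outline, but it takes a genuinely different route from the paper's Section~3 proof, and there is one point where your argument falls short of the theorem's ``explicitly computable'' claim.

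The paper does not reduce to spatial averages $\bar u_i=[u_i]_\Omega$ followed by a finite-dimensional inequality. Instead it works \emph{pointwise with the equilibrium}: Lemma~\ref{lem8} uses the strict monotonicity of $z\mapsto 4z^2+2z+\sqrt z$ together with the conservation law to get, for a.e.\ $x$,
\[
|u_2(x)-u_{2,\infty}|^2 \lesssim \sum_{j}|u_j(x)-[u_j]_\Omega|^2 + |u_2(x)-\sqrt{u_1(x)}|^2 + |u_3(x)-\sqrt{u_2(x)}|^2,
\]
and similar bounds for $u_1,u_3$. These are integrated over $\omega_1$ only (Lemma~\ref{lem10}), which produces an unwanted term $\int_{\omega_1}|u_3-\sqrt{u_2}|^2$; the heart of the paper's proof is the transfer Lemma~\ref{lem11}, a chain of triangle inequalities through the averages $[u_3]_\Omega$, $[u_3]_{\omega_2}$, $[\sqrt{u_2}]_{\omega_2}$, $[\sqrt{u_2}]_\Omega$ that moves this term to $\omega_2$, where it is controlled by $\D(u)$. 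Your approach sidesteps this transfer entirely: by expanding $(u_2^2-u_1)$ and $(u_3^2-u_2)$ around $\bar u$ you extract $(\bar u_2^{\,2}-\bar u_1)^2$ from $\omega_1$ and $(\bar u_3^{\,2}-\bar u_2)^2$ from $\omega_2$ independently, and the two sets never need to talk to each other. This is cleaner and closer in spirit to the Section~2 machinery (Lemmas~\ref{lem2}--\ref{lem5}), adapted via the $L^\infty$ bound to avoid Poincar\'e on $\omega_i$.

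What the paper's route buys is full explicitness: every step in Lemmas~\ref{lem8}--\ref{lem11} is a concrete algebraic or triangle inequality, which is how the paper obtains the stated rate $\lambda^{-1}\sim |\omega_1|^{-1}+|\omega_2|^{-1}$. Your step~(4) relies on a compactness/continuity argument on $\{|\bar u|\le 3\mathcal C_0\}$ to get $(\bar u_2^{\,2}-\bar u_1)^2+(\bar u_3^{\,2}-\bar u_2)^2\gtrsim|\bar u-u_\infty|^2$; this is correct but not constructive, so as written you do not yet deliver the ``explicitly computable'' constants promised in the statement. You would need to replace that compactness step by an explicit algebraic argument (which is doable here, e.g.\ by mimicking the monotone-function trick of Lemma~\ref{lem8} at the level of averages). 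Also, for existence and the uniform bound, the paper simply invokes \cite{fitzgibbon2021reaction} rather than a duality bootstrap; your route works too but is more laborious.
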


	\begin{remark}\hfill
		\begin{itemize}
			\item The convergence rate to equilibrium depends on $\omega_1$ and $\omega_2$ in the following way
			\begin{equation*} 
				\frac 1\lambda =C\bra{\frac{ 1}{|\omega_1|}+\frac{ 1}{|\omega_2|}},
			\end{equation*}
			where the constant $C>0$ is independent of $\omega_1$ and $\omega_2$. It is clear that with this relation, $\lambda\to 0$ if either $|\omega_1| \to 0$ or $|\omega_2| \to 0$. 
			\item By interpolating the exponential convergence with the $\LO{\infty}$-bound \eqref{sup-norm-bound-1}, one can immediately get exponential convergence to equilibrium in $\LO{p}$ for any $1<p<\infty$. Convergence in stronger norms is possible to obtain when the functions $d_i$ and $k_i$ are sufficiently smooth.
		\end{itemize}
	\end{remark}

	\medskip
	%In case, $|\Omega\backslash\varpi|>0$, we have a reaction-diffusion system with degenerate diffusion. 
	If the diffusion of $\S_3$ is completely degenerate, i.e. $d_3(x,t) = 0$ for all $(x,t)\in \Omega\times\R_+$, the global existence of solutions can be obtained with bounded initial data, see e.g. \cite{einav2020indirect} or \cite{braukhoff2022quantitative}. However, if $\{(x,t): d_3(x,t) = 0\}$ has strictly positive measure but is not zero on $\Omega\times\R_+$, the global existence of solutions to \eqref{special_sys} is nontrivial, see e.g. \cite{desvillettes2007global}. In our second example, we prove the global existence and convergence to equilibrium in the case when the diffusion of $\S_3$ is not ``too'' degenerate, i.e. $d_3$ depends only on $x\in\Omega$ and vanishes only on a zero measure set, that is
	\begin{equation}\label{D3'}
		\begin{aligned}
			|\{x\in\Omega: d_3(x) = 0\}| = 0.
		\end{aligned}
	\end{equation}	
	%	Obviously, as $\eps\to 0$, one recovers the degenerate diffusion condition \eqref{D3}. In the following, we show that the solution to \eqref{special_sys} converges to equilibrium \underline{independent of $\eps>0$}.
	\begin{theorem}\label{thm:main3}
		Suppose that \eqref{k1k2}, \eqref{D1D2D3}, \eqref{D1D2} and \eqref{D3'} hold. Moreover, assume that $d_3 \in W^{1,q}(\Omega)$ for some $q>\max(n,2)$.	Then for any non-negative, bounded initial data $u_{0} \in L^{\infty}_+(\Omega)^3$, there exists a unique global non-negative weak solution to \eqref{special_sys}, which is bounded uniformly in time, i.e.
		\begin{equation}\label{sup-norm-bound}
			\sup_{t\ge 0}\sup_{i=1,\ldots, 3}\|u_i(t)\|_{L^{\infty}(\Omega)} \le \mathcal{C}_0<+\infty.
		\end{equation}
		%where $\mathcal{C}_0$ is \underline{independent of $\eps>0$}.
		
		\medskip
		Moreover, assume that $\omega_1$ is measurable with $|\omega_1|>0$, $\omega_2\subset \Omega$ is open with Lipschitz boundary, and
		\begin{equation}\label{assumption} 
			\{x\in\overline{\Omega}: d_3(x) =0\} \subset \omega_2.
		\end{equation} 
		Then the weak solution to \eqref{special_sys} converges to the equilibrium exponentially fast, i.e. there exist {\normalfont explicitly computable} strictly positive constants $C, \lambda>0$ such that
		\begin{equation}\label{exp_convergence}
			\sum_{i=1}^3\|u_i(t) - u_{i,\infty}\|_{L^{1}(\Omega)} \le Ce^{-\lambda t}, \quad \forall t\ge 0,
		\end{equation}
		where $\uinf$ solves \eqref{equi_sys_special}.
% with $M$ is in \eqref{conservation_special}.
	\end{theorem}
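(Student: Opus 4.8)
The plan is to split the argument into a global existence/boundedness part and a convergence part, the latter being the analogue of the entropy approach used in Theorem \ref{thm:main1} but carried out pointwise, exploiting the $L^\infty$-bound, in the spirit of Theorem \ref{thm:measurable}. For global existence with $d_3 = d_3(x)$ vanishing only on a null set: since the nonlinearity has a quasi-positive, mass-controlled structure (indeed \eqref{conservation_special} gives an $L^1$ bound) and since $u_3$ solves a scalar equation with a source that is an affine function of $u_2$ with $x$-dependent, time-independent but degenerate diffusion, I would first establish $L^\infty$-bounds by a duality / bootstrap argument. Concretely, $u_2$ is governed by an equation with uniformly parabolic diffusion ($d_2 \ge \delta$) and a reaction bounded by $C(u_2^2 + u_3^2 + 1)$; using the conservation law and the $W^{1,q}$-regularity of $d_3$ with $q > \max(n,2)$ (so that $d_3^{1/2}$ and its gradient are controlled, allowing a change of unknown $v_3 = d_3 u_3$ or a weighted energy estimate), one can propagate boundedness of $u_1, u_2, u_3$ from bounded initial data. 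The key structural facts I would use are: (a) the triangular/mass structure making $4u_1 + 2u_2 + u_3$ conserved; (b) the entropy dissipation \eqref{eedl}, which holds here since the network $\S_1 \leftrightarrows 2\S_2$, $\S_2 \leftrightarrows 2\S_3$ is complex (in fact detailed) balanced, giving a priori control of $\int_0^\infty \D(u)\,dt < \infty$ even when $d_3$ degenerates, because the two reaction terms and the $d_1, d_2$ diffusion terms are non-degenerate.

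For the convergence part, I would prove the functional inequality $\D(u) \gtrsim \E(u|u_\infty)$ for bounded $u$ satisfying \eqref{conservation_special}, then conclude by Gronwall as in Theorem \ref{thm:main1}. Write $U_i = \sqrt{u_i}$ or work with $\mu_i := u_i - u_{i,\infty}$; using the $L^\infty$-bound $\mathcal C_0$, the relative entropy is equivalent to $\sum_i \|u_i - u_{i,\infty}\|_{L^2(\Omega)}^2$, and by interpolation with the $L^\infty$-bound, to $(\sum_i \|u_i - u_{i,\infty}\|_{L^1(\Omega)})^2$, so it suffices to control $\sum_i \|u_i - u_{i,\infty}\|_{L^2(\Omega)}^2$ from below by $\D(u)$. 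The diffusion part of $\D(u)$ controls $\|\nabla u_1\|_{L^2(\Omega)}^2 + \|\nabla u_2\|_{L^2(\Omega)}^2$ (from $d_1, d_2 \ge \delta$) and $\int_\Omega d_3 |\nabla u_3|^2/u_3\,dx \gtrsim \int_\Omega d_3 |\nabla \sqrt{u_3}|^2\,dx$, which is weighted but, on $\omega_2$ (open Lipschitz), since $\{d_3 = 0\} \cap \overline\Omega \subset \omega_2$ by \eqref{assumption}, on $\Omega \setminus \omega_2$ we have $d_3 \ge c > 0$, hence full gradient control of $u_3$ there. This lets me: (i) use Poincaré–Wirtinger on $\Omega$ for $u_1, u_2$ to compare them to their averages; (ii) use the reaction term $k_1(u_2^2 - u_1)$ which is $\ge \kappa$ on $\omega_1$ together with the pointwise elementary inequality $\Psi(w;z) \gtrsim |\sqrt w - \sqrt z|^2$ (valid on bounded ranges) to get $\int_{\omega_1} |u_2^2 - u_1|^2\,dx \lesssim \D(u)$, and similarly $\int_{\omega_2} |u_3^2 - u_2|^2\,dx \lesssim \D(u)$; (iii) transfer the $\omega_1, \omega_2$ information to all of $\Omega$ via the gradient bounds: for $u_2$ this is immediate, for $u_3$ one uses the gradient bound of $u_3$ on $\Omega \setminus \omega_2$ plus the $\omega_2$-reaction estimate and a Poincaré-type inequality on $\omega_2$ to reconstruct the deviation of $u_3$ from its mean on all of $\Omega$; (iv) finally convert the averaged/reaction information into deviations from $u_\infty$ using the conservation law \eqref{conservation_special} and the strict monotonicity of $x \mapsto 4x^4 + 2x^2 + x$, exactly as in \eqref{equi_sys_special}. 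Combining, $\D(u) \gtrsim \sum_i \|u_i - u_{i,\infty}\|_{L^2(\Omega)}^2 \gtrsim \E(u|u_\infty)$, and then $\frac{d}{dt}\E(u|u_\infty) = -\D(u) \le -\lambda \E(u|u_\infty)$ gives exponential decay of the entropy, hence of $\sum_i \|u_i - u_{i,\infty}\|_{L^1(\Omega)}^2$ by the Csiszár–Kullback–Pinsker inequality, yielding \eqref{exp_convergence}.

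The main obstacle I expect is step (iii) for $u_3$: handling the degenerate-diffusion species near $\{d_3 = 0\}$. The degeneracy set is contained in $\omega_2$, where I do \emph{not} have gradient control of $u_3$, only the reaction-dissipation bound $\int_{\omega_2}|u_3^2 - u_2|^2\,dx \lesssim \D(u)$; outside $\omega_2$ I have $\int_{\Omega\setminus\omega_2}|\nabla u_3|^2\,dx \lesssim \D(u)$, but $\Omega\setminus\omega_2$ need not be connected and need not satisfy a Poincaré inequality by itself. The fix is to use that $u_2$ \emph{is} controlled on all of $\Omega$ (full diffusion), so $\|u_2 - [u_2]_\Omega\|_{L^2(\Omega)}^2 \lesssim \D(u)$, and then on $\omega_2$ the reaction term forces $u_3^2 \approx u_2 \approx [u_2]_\Omega$, pinning $u_3$ near a constant on $\omega_2$; combined with the gradient bound of $u_3$ on the open Lipschitz complement and a trace/extension argument across $\partial\omega_2$ (using that $\omega_2$ is open with Lipschitz boundary), one propagates near-constancy of $u_3$ to a neighborhood and then, if necessary by a covering/chaining argument exploiting connectedness of $\Omega$, to all of $\Omega$. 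Making this chaining quantitative — so that the resulting constant is explicitly computable and the rate scales like $1/\lambda = C(1/|\omega_1| + 1/|\omega_2|)$ — is the technically delicate point; the rest is a routine assembly of Poincaré inequalities, elementary convexity estimates for $\Psi$, the conservation law, and Gronwall's lemma.
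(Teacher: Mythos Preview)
Your convergence strategy is essentially the paper's, and you have correctly located the crux in step (iii). However, the ``trace/extension across $\partial\omega_2$'' and ``covering/chaining'' machinery you propose is heavier than what is needed and would be awkward to make quantitative. The paper's device is simpler: since $\{d_3=0\}\cap\overline\Omega$ is compact and contained in the open set $\omega_2$, one inserts an intermediate open $C^1$ set $B$ with $\{d_3=0\}\subset B\Subset\omega_2$, chosen so that $B^c=\Omega\setminus B$ supports Poincar\'e--Wirtinger. Then $d_3\ge\varrho>0$ on $B^c$, giving $\int_{B^c}|\nabla u_3|^2\,dx\lesssim\D(u)$ and hence control of $\|u_3-[u_3]_{B^c}\|_{L^2(B^c)}$. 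The overlap $\vartheta:=\omega_2\setminus B$ is nonempty and lies in both $B^c$ (diffusion control) and $\omega_2$ (reaction control $\int_{\omega_2}|u_3-\sqrt{u_2}|^2\,dx\lesssim\D(u)$), so the averages $[u_3]_{B^c}$ and $[u_3]_{\omega_2}$ are linked by integrating over $\vartheta$ alone---no traces, no chaining. From there $\|u_3-[u_3]_\Omega\|_{L^2(\Omega)}^2\lesssim\D(u)$ follows by splitting $\Omega=\omega_2\cup\omega_2^c$ and comparing averages via elementary triangle and Cauchy--Schwarz inequalities. Your worry that $\Omega\setminus\omega_2$ may fail Poincar\'e is resolved precisely by working on $B^c$ instead.

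For existence and the uniform $L^\infty$ bound, your duality/bootstrap sketch is a genuine gap. Duality gives $L^2$-type bounds, not $L^\infty$, and the degenerate diffusion of $u_3$ obstructs any direct maximum-principle upgrade. The paper instead exploits a tailored Lyapunov structure: for each $p\in\mathbb N$, the functional
\[
\mathcal H_p[u]=\int_\Omega\Bigl(\tfrac{4}{p+1}u_1^{\,p+1}+\tfrac{2}{2p+1}u_2^{\,2p+1}+\tfrac{1}{4p+1}u_3^{\,4p+1}\Bigr)dx
\]
satisfies $\frac{d}{dt}\mathcal H_p\le 0$ along solutions of the $(d_3+\eps)$-regularised system, because the exponents are chosen so that the reaction contributions $(u_2^2-u_1)(u_2^{2p}-u_1^p)$ and $(u_3^2-u_2)(u_3^{4p}-u_2^{2p})$ are nonnegative. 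Taking $p$-th roots and sending $p\to\infty$ yields an $\eps$-independent $L^\infty$ bound for all times. The compactness step then uses $d_3\in W^{1,q}$ exactly as you guessed, via $d_3u_{\eps3}$ and Aubin--Lions, together with $|\{d_3=0\}|=0$ to recover a.e.\ convergence of $u_{\eps3}$ itself. Without the $\mathcal H_p$ trick (or something equivalent), your existence argument does not close.
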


	\begin{remark}\hfill
		\begin{itemize}
			\item Assumption \eqref{assumption} and the fact that $\omega_2$ is open imply that there is an open set where a strictly positive diffusion of $\S_3$ and the reaction $\S_2 \leftrightarrows 2\S_3$ are both present. This will be used crucially in our proof.
			\item When \eqref{D3'} is not satisfied, i.e. $d_3$ can be zero on a set of positive measure, the global existence of solutions to \eqref{special_sys} is unclear, see e.g. \cite{desvillettes2007global}. Nevertheless, by replacing $d_3$ by $d_3 + \eps$, we can use the same arguments in the proof of Theorem \ref{thm:main3} to show that the solution to \eqref{special_sys} (with $d_3$ replaced by $d_3 +\eps$) converges to equilibrium exponentially with rates and constants {\normalfont independent of $\eps>0$}. The global existence of solutions and convergence to equilibrium for \eqref{special_sys} in case $|\{x\in\Omega: d_3(x) = 0\}| > 0$ remains as an interesting open problem.
		\end{itemize}	
	\end{remark}	
	%	We emphasize the following differences of Theorems \ref{thm:measurable} and \ref{thm:main3} comparing to Theorem \ref{thm:main1}:
	%	\begin{itemize}
		%		\item in Theorem \ref{thm:measurable}, the subsets $\omega_1$ and $\omega_2$ need only to be \textit{measurable}, and therefore not necessarily satisfy the Poincar\'e inequality \eqref{P};
		%		\item Theorem \ref{thm:main3} shows that even the diffusion of $\S_3$ can be degenerate in the sense that the corresponding diffusion coefficient is positive only in proper subset of $\Omega$.
		%	\end{itemize}

	%	\medskip
	%	The following conditional result is a direct consequence of Theorem \ref{thm:main3}.
	%	\begin{corollary}
		%		Assume \eqref{k1k2}, \eqref{D1D2D3}, \eqref{D1D2} and \eqref{D3}, and either {\normalfont (i)} and {\normalfont (ii)} in Theorem \ref{thm:main3} hold. Suppose that $u = (u_1,u_2,u_3)$ be a global solution to \eqref{special_sys} such that \eqref{entropy_special} and \eqref{entropy_dissipation_special} are well defined for all $t>0$. Then this solution converges to equilibrium exponentially, i.e.
		%		\begin{equation}\label{exp_convergence_1}
			%			\sum_{i=1}^3\|u_i(t) - u_{i,\infty}\|_{L^{1}(\Omega)} \le Ce^{-\lambda t}, \quad \forall t\ge 0.
			%		\end{equation}		
		%	\end{corollary}
	
	\medskip
	\textbf{Notation}. We use the following notation in this paper:
	\begin{itemize}
		\item for a measurable set $A$ with positive measure, $[u]_A$ denotes the spatial average of $u$ over $A$, 
		\begin{equation*}
			[u]_A: = \frac{1}{|A|}\int_{A}u(x)dx;
		\end{equation*}
		\item we use capital letters to denote the square roots of the corresponding letters, e.g.
		\begin{equation*}
			U_i = \sqrt{u_i}, \quad U_{i,\infty} = \sqrt{u_{i,\infty}};
		\end{equation*}
		\item the notation $X\lesssim Y$ means that there exists $C>0$ independent of $X$ and $Y$ such that $X \le CY$. Occasionally, we write $X\lesssim_{\alpha,\beta,\ldots} Y$ to emphasise the dependence of the inequality on the parameters $\alpha, \beta, \ldots$
		\item for a positive vector $u\in (0,\infty)^m$ and $y \in \R^m$,
		\begin{equation*}
			u^y:= \prod_{i=1}^mu_i^{y_i}.
		\end{equation*}
	\end{itemize}
	
	\medskip
	\textbf{Organization of the paper.} In the next section, we will prove Theorem \ref{thm:main1} for degenerate reaction. The convergence to equilibrium with reactions happening  in measurable sets (Theorem \ref{thm:measurable}) will be shown in Section \ref{sec3}. Finally, Section \ref{sec4} considers \eqref{special_sys} with both degenerate diffusion and reactions as stated in Theorem \ref{thm:main3}.
	%%%%%%%%%%%%%%%%%%%%%%%%%%%%%%%%%%%%%%%%%%%%%%%%%%%%%%%%%%%%%%%%%%%%%%%%%%%%%%%%%%%%%%%%%%%%%%%%%%%%%%%%%%%%
	\section{Degenerate reactions - Proof of Theorem \ref{thm:main1}}\label{sec2}
	%%%%%%%%%%%%%%%%%%%%%%%%%%%%%%%%%%%%%%%%%%%%%%%%%%%%%%%%%%%%%%%%%%%%%%%%%%%%%%%%%%%%%%%%%%%%%%%%%%%%%%%%%%%%
	
	We start with the definition of renormalised solutions to \eqref{Sys}.
	\begin{definition}\label{def:renormalised_solution}
		A vector of concentration $u: \Omega\times \R_+ \to \R_+^m$ is called a global renormalised solution to \eqref{Sys}  if for any $T>0$, $u_i\log u_i \in L^\infty(0,T;L^1(\Omega))$, $\sqrt{u_i} \in L^2(0,T;H^1(\Omega))$ and for any smooth function $\xi: \R_+^m\to \R$ with compactly supported derivative $\na \xi$ and every $\psi \in C^\infty(\overline{\Omega}\times \R_+)$, there holds
		\begin{align*}
			&\intO \xi(u(\cdot, T))\psi(\cdot,T)dx - \intO\xi(u_0)\psi(\cdot,0)dx - \int_0^T\intO \xi(u)\partial_t\psi dxdt\\
			&= -\sum_{i,j=1}^m \int_0^T\intO \psi \partial_i\partial_j \xi(u)(D_i(x,t)\na u_i)\cdot \na u_jdxdt\\
			&\quad - \sum_{i=1}^m\int_0^T\intO \partial_i\xi(u)(D_i(x,t)\na u_i)\cdot \na \psi dxdt + \sum_{i=1}^m\int_0^T\intO \partial_i\xi(u)R_i(x,t,u)\psi dxdt.
		\end{align*}	
	\end{definition}

	\begin{proposition}\label{pro:existence}
		Assume (i)--(iv) in Theorem \ref{thm:main1}. Then for any non-negative initial data $(u_{i,0})\in L^1_+(\Omega)^m$ with $\sum_{i=1}^m\intO u_{i,0}\log u_{i,0}dx < +\infty$, there exists a global renormalised solution to \eqref{Sys}.
	\end{proposition}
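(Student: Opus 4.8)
\textbf{Proof strategy for Proposition \ref{pro:existence}.} The plan is to follow the now-standard construction of renormalised solutions for reaction-diffusion systems possessing an entropy (free energy) dissipation structure, as developed by Fischer \cite{fischer2015global} and adapted to the complex balanced setting (see also \cite{fellner2018convergence}). The crucial structural input that assumptions (i)--(iv) provide, via Lemma \ref{lem:equilibrium}, is the existence of a strictly positive CBE $u_\infty$, which yields the formal entropy-entropy dissipation law \eqref{eedl}: differentiating $\E(u|u_\infty)$ along solutions gives $-\D(u)\le 0$, where $\D(u)$ controls both $\sum_i \underline D_i\,\|\na\sqrt{u_i}\|_{L^2}^2$ (using ellipticity in (ii)) and the nonnegative reaction contributions built from $\Psi(w;z)=w\log(w/z)-w+z\ge 0$. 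This a priori bound is what makes the whole construction possible despite the arbitrarily high polynomial order of the $R_i$.

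First I would set up a regularised/truncated system: replace the reaction rates $R_i(x,t,u)$ by bounded, globally Lipschitz approximations $R_i^{(k)}$ (e.g. truncating the monomials $u^{y_r}$ at level $k$, keeping the same sign structure so that the entropy inequality and the conservation laws \eqref{conservation_laws} are preserved), and solve the approximate problem by standard parabolic theory, obtaining nonnegative bounded solutions $u^{(k)}$ on $\Omega\times\R_+$ — nonnegativity follows from the quasi-positivity of the reaction terms ($R_i\ge 0$ whenever $u_i=0$), and the conservation laws give a uniform $L^\infty_{\mathrm{loc}}(\R_+;L^1(\Omega))$ bound. Then I would derive the entropy estimate for $u^{(k)}$: $\E(u^{(k)}(t)|u_\infty) + \int_0^t \D(u^{(k)})\,ds \le \E(u_0|u_\infty) < \infty$ (finite by the hypothesis $\sum_i\int_\Omega u_{i,0}\log u_{i,0}\,dx<\infty$ together with the $L^1$ bound). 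This gives, uniformly in $k$: (a) $u_i^{(k)}\log u_i^{(k)}$ bounded in $L^\infty(0,T;L^1(\Omega))$; (b) $\sqrt{u_i^{(k)}}$ bounded in $L^2(0,T;H^1(\Omega))$; (c) a bound on the space-time integral of $\sum_r k_r u_\infty^{y_r}\Psi(\cdots)$, which in particular yields an $L^1(\Omega\times(0,T))$ bound on the nonlinearities (this is the point where the $\Psi\log$ structure is used to control $u^{y_r}\log u^{y_r}$, hence $u^{y_r}$, in $L^1$).

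Next I would extract weakly/strongly convergent subsequences. From (a)--(b) and a de la Vallée-Poussin/Dunford-Pettis argument one gets $u_i^{(k)}\to u_i$ weakly in $L^1$ (in fact with equi-integrability), and using the $H^1$ bound on the square roots plus an Aubin-Lions-type compactness argument applied to $\sqrt{u_i^{(k)}}$ (the time derivative being controlled in a negative Sobolev space via the equation and the $L^1$ reaction bound) one upgrades this to a.e. convergence $u_i^{(k)}\to u_i$ on $\Omega\times(0,T)$. A.e. convergence is exactly what is needed to pass to the limit in the renormalised formulation: for a fixed smooth $\xi$ with compactly supported gradient, all terms $\xi(u^{(k)})$, $\partial_i\xi(u^{(k)})$, $\partial_i\partial_j\xi(u^{(k)})$ are bounded and converge a.e., so by dominated convergence they converge strongly; the diffusion terms $(D_i\na u_i^{(k)})\cdot\na u_j^{(k)}$ are handled by writing $\na u_i^{(k)} = 2\sqrt{u_i^{(k)}}\,\na\sqrt{u_i^{(k)}}$ and combining weak $L^2$ convergence of $\na\sqrt{u_i^{(k)}}$ with strong convergence of the bounded factors $\sqrt{u_i^{(k)}}\partial_i\partial_j\xi(u^{(k)})$; the reaction term $\partial_i\xi(u^{(k)})R_i^{(k)}(u^{(k)})\psi$ converges because $\partial_i\xi$ has compact support (so the dangerous high-order growth is cut off) and $R_i^{(k)}\to R_i$ a.e. on that support. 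Finally one checks that the limit $u$ inherits the regularity $u_i\log u_i\in L^\infty(0,T;L^1(\Omega))$, $\sqrt{u_i}\in L^2(0,T;H^1(\Omega))$ from lower semicontinuity, so $u$ is a global renormalised solution in the sense of Definition \ref{def:renormalised_solution}.

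The main obstacle, as usual in this circle of ideas, is obtaining a.e. (or strong $L^1$) convergence of the $u_i^{(k)}$ rather than merely weak convergence, since only then can one pass to the limit in the nonlinear reaction terms inside the renormalised formulation; this is where the compactness coming from the $L^2(0,T;H^1(\Omega))$ bound on $\sqrt{u_i^{(k)}}$ (from the diffusion part of $\D$) is essential, and one must be slightly careful that it is the square roots, not the $u_i^{(k)}$ themselves, that enjoy the Sobolev bound. A secondary technical point is to ensure the approximation scheme is chosen so that the discrete analogue of the entropy identity \eqref{eedl} holds exactly (or with a favorable sign), so that the uniform entropy-dissipation bound survives the limit; this is routine but must be done with care given the dependence of $k_r$ and $D_i$ on $(x,t)$. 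I would remark that, since this construction is essentially contained in \cite{fischer2015global,fellner2018convergence}, a detailed proof can be omitted or only sketched, with precise references given. \hfill$\square$
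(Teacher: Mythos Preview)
Your proposal is correct, but it takes a different route from the paper. The paper's proof is essentially a one-line appeal to \cite[Theorem 1]{fischer2015global}: rather than re-sketching the regularisation/compactness/limit machinery for renormalised solutions, it identifies the single structural hypothesis of Fischer's theorem that must be checked, namely
\[
\sum_{i=1}^m R_i(x,t,u)\,\log\frac{u_i}{u_{i,\infty}} \;\le\; 0,
\]
and then devotes the entire proof to verifying this inequality by an explicit computation. The computation splits the sum over reactions by strongly connected component $\C_l$, uses assumption \eqref{B} to factor $k_j(x,t)=\beta_j\alpha_l(x,t)$ within each component, and then invokes the complex balanced condition \eqref{equilibrium_cond} to show that a certain residual sum vanishes identically; what remains is $-\sum_r k_r(x,t)u_\infty^{y_r}\Psi(u^{y_r}/u_\infty^{y_r};u^{y_r'}/u_\infty^{y_r'})\le 0$.

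By contrast, you take the entropy dissipation inequality as a known consequence of complex balance (``differentiating $\E(u|u_\infty)$ along solutions gives $-\D(u)\le 0$'') and instead spend your effort re-outlining Fischer's construction. This is fine as a strategy and you correctly flag the main technical issues (a.e.\ convergence via compactness on $\sqrt{u_i}$, compact support of $\nabla\xi$ to handle the reaction term). But the paper's choice is more economical, and arguably more to the point here: in the degenerate setting with $(x,t)$-dependent rate constants satisfying \eqref{A}--\eqref{B}, the fact that the entropy inequality still holds is not entirely automatic and deserves to be checked---it is precisely where the assumption $k_j=\beta_j\alpha_l$ within each linkage class is used. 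Your sketch would be strengthened by isolating and verifying this condition explicitly, after which the rest can indeed be deferred to \cite{fischer2015global}.
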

	\begin{proof}
		We will apply \cite[Theorem 1]{fischer2015global}. In order to do that, it is sufficient to check that 
		\begin{equation*}
			\sum_{i=1}^mR_i(x,t,u)\log\frac{u_i}{u_{i,\infty}} \le 0.
		\end{equation*}
		We use the ideas in \cite[Proposition 2.1]{desvillettes2017trend}. We rewrite, by using $R(x,t,u) := (R_i(x,t,u))$ and $\log(u/u_\infty) := (\log(u_i/u_{i,\infty}))$ for $i=1,..,m$,
		\begin{align*}
			&\sum_{i=1}^mR_i(x,t,u)\log\frac{u_i}{u_{i,\infty}} = R(x,t,u)\cdot \log\frac{u}{u_\infty}= \sum_{r=1}^Rk_r(x,t)u^{y_r}(y_r'-y_r)\cdot \log\frac{u}{u_\infty}\\
			&= -\sum_{r=1}^Rk_r(x,t)u^{y_r}\log\frac{u^{y_r-y_r'}}{u_\infty^{y_r-y_r'}}\\
			&= -\sum_{r=1}^Rk_r(x,t)u_\infty^{y_r}\left[\frac{u^{y_r}}{\uinf^{y_r}}\log\bra{\frac{u^{y_r}}{\uinf^{y_r}}\bigg/\frac{u^{y_r'}}{\uinf^{y_r'}}} - \frac{u^{y_r}}{\uinf^{y_r}} + \frac{u^{y_r'}}{\uinf^{y_r'}} \right]- \sum_{r=1}^R\bra{k_r(x,t)u^{y_r} - k_r(x,t)u^{y_r'}\frac{\uinf^{y_r}}{\uinf^{y_r'}}}\\
			&\le -\sum_{r=1}^R\bra{k_r(x,t)u^{y_r} - k_r(x,t)u^{y_r'}\frac{\uinf^{y_r}}{\uinf^{y_r'}}}.
		\end{align*}
		It remains to show that the last sum vanishes. Using assumption \eqref{A}, we can write
		\begin{align*}
			\sum_{r=1}^R\bra{k_r(x,t)u^{y_r} - k_r(x,t)u^{y_r'}\frac{\uinf^{y_r}}{\uinf^{y_r'}}} &= \sum_{l=1}^s\sum_{j=L_{l-1}+1}^{L_l}\bra{k_j(x,t)u^{y_j} - k_j(x,t)u^{y_j'}\frac{\uinf^{y_j}}{\uinf^{y_j'}}}.
		\end{align*}
		Now thanks to assumption \eqref{B}, for each $l\in\{1,\ldots, s\}$,
		\begin{align*}
			\sum_{j=L_{l-1}+1}^{L_l}&\bra{k_j(x,t)u^{y_j} - k_j(x,t)u^{y_j'}\frac{\uinf^{y_j}}{\uinf^{y_j'}}} = \alpha_l(x,t)\sum_{j=L_{l-1}+1}^{L_l}\bra{\beta_ju^{y_j} - \beta_ju^{y_j'}\frac{\uinf^{y_j}}{\uinf^{y_j'}}}\\
			&= \alpha_l(x,t)\sum_{y\in \C_l}\left[\underset{y_j=y}{\sum_{L_{l-1}+1\le j\le L_l}}\beta_ju^{y_j} - \underset{y_k'=y}{\sum_{L_{l-1}+1\le k\le L_l}}\beta_ku^{y_k'}\frac{\uinf^{y_k}}{\uinf^{y_k'}} \right]\\
			&= \alpha_l(x,t)\sum_{y\in \C_l}\left[u^{y}\underset{y_j=y}{\sum_{L_{l-1}+1\le j\le L_l}}\beta_j -  \frac{u^y}{\uinf^y}\underset{y_k'=y}{\sum_{L_{l-1}+1\le k\le L_l}}\beta_k \uinf^{y_k} \right]\\
			&= \alpha_l(x,t)\sum_{y\in \C_l}\frac{u^y}{\uinf^y}\left[\underset{y_j=y}{\sum_{L_{l-1}+1\le j\le L_l}}\beta_j\uinf^{y_j} - \underset{y_k'=y}{\sum_{L_{l-1}+1\le k\le L_l}}\beta_k \uinf^{y_k} \right]\\
			&= 0,
		\end{align*}
		thanks to the definition of $\uinf$ in Definition \ref{def:equilibrium}.
	\end{proof}
	Due to the low regularity of renormalised solution, we can only prove a weak version of the entropy-entropy dissipation relation \eqref{eedl}. Moreover, it can also be shown that renormalised solutions satisfy the conservation laws \eqref{conservation_laws}. 
	\begin{lemma}\label{lem0}
		Any renormalised solution to \eqref{Sys} satisfies the following weak entropy-entropy dissipation law
		\begin{equation*}
			\E(u(s)|u_\infty)\bigg|_{s=\tau}^{s=T} + \int_\tau^T \D(u(s))ds \le 0, \quad \forall 0\le \tau < T,
		\end{equation*}
		and the conservation laws \eqref{conservation_laws}, i.e.
		\begin{equation*}
			\intO q_j\cdot u(x,t)dx = \intO q_j\cdot u_0(x)dx, \quad \forall j=1,\ldots, K, \; \forall t\ge 0.
		\end{equation*}
		Consequently, there is $M_0>0$ depending on $\E(u_0|u_\infty)$ such that
		\begin{equation}\label{L1_bound} 
			\sup_{t\ge 0}\sup_{i=1,\ldots,m}\|u_i(t)\|_{\LO{1}} \le M_0.
		\end{equation}
	\end{lemma}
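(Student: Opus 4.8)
The plan is to derive the three assertions — the weak entropy-entropy dissipation inequality, the conservation laws, and the resulting $L^1$-bound \eqref{L1_bound} — in that order, in each case by choosing appropriate test functions in the renormalised formulation of Definition \ref{def:renormalised_solution} and passing to a limit. For the conservation laws, I would first take $\xi(u) = q_j \cdot u$ (after a standard truncation to make $\na\xi$ compactly supported, then removing the truncation using $u_i \log u_i \in L^\infty(0,T;L^1(\Omega))$, which controls $\int_\Omega u_i\,dx$ uniformly and lets one pass to the limit by dominated convergence) together with $\psi \equiv 1$. Since $\xi$ is affine, the second-order term on the right-hand side of the renormalised identity vanishes, the term with $\na\psi$ vanishes, and the reaction term contributes $\int_0^t \int_\Omega \sum_i (q_j)_i R_i(x,s,u)\,\psi\,dx\,ds$; because $q_j \in \ker(\W^\top)$ one has $\sum_i (q_j)_i R_i(x,s,u) = \sum_r k_r(x,s)\,(q_j \cdot (y_r' - y_r))\, u^{y_r} = 0$ pointwise (this is exactly the formal computation already recorded in the introduction), so the reaction term disappears and we are left with $\int_\Omega q_j \cdot u(\cdot,t)\,dx = \int_\Omega q_j \cdot u_0\,dx$. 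The $L^1$-bound \eqref{L1_bound} is then immediate: the positive linear functionals $u \mapsto \int_\Omega q_j \cdot u$ span enough directions (or one combines them with the entropy control $\int_\Omega (u_i \log u_i - u_i + 1)\,dx \le \E(u_0|u_\infty) + \text{const}$, valid once the entropy inequality below is established) to bound each $\|u_i(t)\|_{L^1(\Omega)}$ by a constant $M_0$ depending only on the conserved masses and $\E(u_0|u_\infty)$.

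For the weak entropy-entropy dissipation law I would approximate the entropy density. Fix $\delta > 0$ and let $\xi_\delta(u) = \sum_{i=1}^m \phi_\delta(u_i)$ where $\phi_\delta$ is a smooth, bounded modification of $v \mapsto v\log(v/u_{i,\infty}) - v + u_{i,\infty}$ whose derivative is compactly supported (e.g. $\phi_\delta'(v)$ coincides with $\log(v/u_{i,\infty})$ on $[\delta, 1/\delta]$ and is cut off smoothly outside). With $\psi \equiv 1$ the renormalised identity reads
\begin{equation*}
\intO \xi_\delta(u(\cdot,T))\,dx - \intO \xi_\delta(u(\cdot,\tau))\,dx = -\sum_{i=1}^m \int_\tau^T \intO \phi_\delta''(u_i)\,(D_i(x,t)\na u_i)\cdot\na u_i\,dx\,dt + \sum_{i=1}^m \int_\tau^T \intO \phi_\delta'(u_i)\,R_i(x,t,u)\,dx\,dt,
\end{equation*}
where I have used the semigroup/time-shift so as to start at time $\tau$ rather than $0$ (or, alternatively, I would first write the identity on $(0,T)$ and on $(0,\tau)$ and subtract, which is licit since renormalised solutions satisfy the formulation with arbitrary initial time — this should be justified by a short density/regularisation remark). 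On the region $\{\delta \le u_i \le 1/\delta\}$ one has $\phi_\delta''(u_i) = 1/u_i$, so the diffusion term converges, as $\delta \to 0$, to $\sum_i \int_\tau^T \int_\Omega D_i \na u_i \cdot \na u_i / u_i\,dx\,dt$ by monotone convergence (the integrand is nonnegative by ellipticity (ii), and it is finite because $\sqrt{u_i} \in L^2(0,T;H^1(\Omega))$ gives $\int \int |\na u_i|^2/u_i < \infty$). For the reaction term, the pointwise algebraic identity already displayed in the proof of Proposition \ref{pro:existence} shows that $\sum_i \phi'(u_i) R_i(x,t,u)$ equals $-\sum_r k_r(x,t) u_\infty^{y_r}\Psi(u^{y_r}/u_\infty^{y_r}; u^{y_r'}/u_\infty^{y_r'})$, i.e. precisely the negative of the reaction part of $\D(u)$, which is $\le 0$; after the cutoff this produces, in the limit, $\ge -\int_\tau^T(\text{reaction part of }\D)\,ds$ by Fatou (the terms are nonpositive). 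Combining, the left-hand side converges to $\E(u(T)|u_\infty) - \E(u(\tau)|u_\infty)$ (again by monotone/dominated convergence using $u_i\log u_i \in L^\infty(0,T;L^1)$), and one arrives at $\E(u(T)|u_\infty) - \E(u(\tau)|u_\infty) + \int_\tau^T \D(u(s))\,ds \le 0$.

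The main obstacle I expect is the limiting argument in the reaction term, because $R_i$ involves the high-order monomials $u^{y_r}$ which are \emph{not} known a priori to be integrable — renormalised solutions carry only $u_i\log u_i \in L^\infty_t L^1_x$ and $\sqrt{u_i}\in L^2_t H^1_x$, nowhere near enough to make sense of $u^{y_r}$ in $L^1$. The resolution is that I never need to pass to a limit in $\int \int k_r u^{y_r}$ as a convergent quantity; instead I keep the cutoff $\phi_\delta'$ in place, note that the cutoff reaction contribution is bounded \emph{above} by $0$ uniformly in $\delta$ (by the same entropy-dissipation sign computation applied to the truncated test function, which is exactly the structural inequality used in Proposition \ref{pro:existence}), and use Fatou's lemma on the nonpositive integrands to get the one-sided inequality — a two-sided identity is neither available nor needed. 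A secondary technical point is legitimising the "start at time $\tau$" step and the removal of the $\na\xi$-compact-support restriction; both are handled by the standard approximation scheme under which renormalised solutions are constructed in \cite{fischer2015global}, so I would invoke that and keep the discussion brief.
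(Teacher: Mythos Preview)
Your outline is essentially correct and is, in fact, a sketch of what the paper defers to: the paper's own proof of this lemma consists of recalling the algebraic identity from Proposition~\ref{pro:existence} and then citing \cite[Propositions~5 and~6]{fischer2017weak} for both the weak entropy--entropy dissipation law and the conservation laws, after which the $L^1$-bound \eqref{L1_bound} is read off from $\E(u(t)|u_\infty)\le\E(u_0|u_\infty)$ via the elementary inequality $u_i\le u_i\log(u_i/u_{i,\infty})+C$. So your proposal is not a different route but an unpacking of the reference that the paper invokes.

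One point deserves sharpening. For the entropy inequality your handling of the reaction term is exactly right: the sign from the computation in Proposition~\ref{pro:existence} lets you use Fatou on the truncated reaction contribution, and no integrability of $u^{y_r}$ is required. For the \emph{conservation laws}, however, the sentence ``removing the truncation using $u_i\log u_i\in L^\infty(0,T;L^1(\Omega))$ \ldots\ by dominated convergence'' only treats the left-hand side $\intO\xi_M(u)\,dx\to\intO q_j\cdot u\,dx$. The reaction term with the truncation in place is $\sum_i(q_j)_i\,\partial_i\xi_M(u)\,R_i(x,t,u)$, which does \emph{not} vanish pointwise before the limit, and there is no sign to exploit; dominated convergence fails here because $u^{y_r}$ is not known to be integrable. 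The actual argument in \cite{fischer2017weak} either goes back to the approximating (smooth) solutions --- where the conservation laws hold exactly and one passes to the limit using the uniform $L\log L$ bound for equi-integrability --- or uses the already-established entropy dissipation bound to gain the needed control. Your closing remark about invoking the approximation scheme from \cite{fischer2015global} is the correct way to close this, but the intermediate sentence as written would not stand on its own.
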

	\begin{proof}
		From the proof of Proposition \ref{pro:existence}, we have
		\begin{equation*}
			-\sum_{i=1}^mR_i(x,t,u)\log\frac{u_i}{u_{i,\infty}} = \sum_{r=1}^{R}\int_{\Omega}k_r(x,t)\uinf^{y_{r}}\Psi\bra{\frac{u^{y_r}}{\uinf^{y_r}}; \frac{u^{y_r'}}{\uinf^{y_r'}}}dx.
		\end{equation*}
		The weak entropy-entropy dissipation law and the conservation laws then follow from \cite[Propositions 5 and 6]{fischer2017weak}. To show \eqref{L1_bound} we first note that by choosing $\tau = 0$ in the weak entropy-entropy dissipation law, we have in particular
		\begin{equation*} 
			\E(u(t)|u_\infty)\le \E(u_0|u_{\infty}) \quad \forall t\ge 0.
		\end{equation*}
		Using the elementary inequality $u_i\le u_i\log(u_i/u_{i,\infty}) + C$,
		for a constant $C$ depending only on $u_{i,\infty}$, we get \eqref{L1_bound} immediately.
	\end{proof}
	
	The following Csisz\'ar-Kullback-Pinsker type inequality shows that a decay to zero of the relative entropy implies the convergence to equilibrium for solutions in $L^1(\Omega)$-norm.
	\begin{lemma}\label{lem:CSK}
		There exists a constant $C_{\text{\normalfont CSK}}>0$ depending on $M_0$ (see Lemma \ref{lem0}), the domain $\Omega$, and the equilibrium $u_\infty$,  such that 
		the following inequality holds for any renormalised solution to \eqref{Sys}
		\begin{equation*} 
			\E(u|u_\infty) \ge C_{\text{\normalfont CSK}}\sum_{i=1}^m\|u_i - u_{i,\infty}\|_{L^1(\Omega)}^2.
		\end{equation*}
	\end{lemma}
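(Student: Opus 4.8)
The plan is to prove the Csisz\'ar--Kullback--Pinsker type inequality in two stages: first reduce the full entropy to the case of each species being compared to its own spatial average, and then handle the finite-dimensional comparison between the averages and the equilibrium using the conservation laws together with the uniform $L^1$ bound \eqref{L1_bound}.

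\textbf{Step 1: Reduction to spatial averages.} First I would split the relative entropy as
\begin{equation*}
	\E(u|u_\infty) = \sum_{i=1}^m\intO\bra{u_i\log\frac{u_i}{[u_i]_\Omega} - u_i + [u_i]_\Omega}dx + \sum_{i=1}^m\bra{[u_i]_\Omega\log\frac{[u_i]_\Omega}{u_{i,\infty}} - [u_i]_\Omega + u_{i,\infty}},
\end{equation*}
which is valid because $|\Omega| = 1$ and $\intO(u_i - [u_i]_\Omega)dx = 0$. The first group of terms is the relative entropy of $u_i$ with respect to its mean, for which the classical Csisz\'ar--Kullback--Pinsker inequality on a probability space gives
\begin{equation*}
	\intO\bra{u_i\log\frac{u_i}{[u_i]_\Omega} - u_i + [u_i]_\Omega}dx \ge \frac{1}{2[u_i]_\Omega + \tfrac23\|u_i - [u_i]_\Omega\|_{L^1(\Omega)}}\|u_i - [u_i]_\Omega\|_{L^1(\Omega)}^2 \gtrsim_{M_0} \|u_i - [u_i]_\Omega\|_{L^1(\Omega)}^2,
\end{equation*}
where the last step uses $[u_i]_\Omega \le M_0$ and $\|u_i - [u_i]_\Omega\|_{L^1(\Omega)} \le 2M_0$ from \eqref{L1_bound}.

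\textbf{Step 2: The finite-dimensional estimate.} It remains to bound the second group, a sum of one-dimensional relative entropies $\Psi([u_i]_\Omega; u_{i,\infty})$ in the notation $\Psi(w;z) = w\log(w/z) - w + z$, from below by $\sum_i |[u_i]_\Omega - u_{i,\infty}|^2$. Set $a_i := [u_i]_\Omega \in [0, M_0]$ and $a_\infty := u_\infty \in (0,\infty)^m$. The key structural fact is that $a$ and $a_\infty$ satisfy the \emph{same} conservation laws: integrating \eqref{conservation_laws} over $\Omega$ (again using $|\Omega| = 1$) gives $q_j\cdot a = \intO q_j\cdot u_0\,dx = q_j\cdot a_\infty$ for all $j = 1,\dots,K$, so $a - a_\infty \in \ker(\W^\top)^\perp = \mathrm{range}(\W)$. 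One then argues as in the standard finite-dimensional entropy estimates for complex balanced systems (cf.\ \cite{desvillettes2017trend, fellner2018convergence}): on the compact set $\{a \in [0,M_0]^m : q_j\cdot a = q_j\cdot a_\infty \ \forall j\}$, the function $a \mapsto \sum_i\Psi(a_i; a_{i,\infty})$ is nonnegative, strictly convex along the affine conservation manifold, and vanishes only at $a = a_\infty$ (by strict convexity of $\Psi(\cdot; z)$ and uniqueness of the positive equilibrium on the conservation manifold, Lemma \ref{lem:equilibrium}); hence by a Taylor expansion near $a_\infty$ together with a compactness/continuity argument away from $a_\infty$, there is $c>0$, depending only on $M_0$, $\Omega$ and $u_\infty$, with
\begin{equation*}
	\sum_{i=1}^m\Psi(a_i; a_{i,\infty}) \ge c\,|a - a_\infty|^2 \gtrsim \sum_{i=1}^m|[u_i]_\Omega - u_{i,\infty}|^2.
\end{equation*}

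\textbf{Conclusion.} Combining the two steps and using $\|u_i - u_{i,\infty}\|_{L^1(\Omega)}^2 \le 2\|u_i - [u_i]_\Omega\|_{L^1(\Omega)}^2 + 2|[u_i]_\Omega - u_{i,\infty}|^2$ (the latter since $\|[u_i]_\Omega - u_{i,\infty}\|_{L^1(\Omega)} = |[u_i]_\Omega - u_{i,\infty}|$ as $|\Omega| = 1$), we obtain the claimed inequality with $C_{\text{CSK}}$ depending on $M_0$, $\Omega$, and $u_\infty$. The main obstacle is Step 2: one must make sure the finite-dimensional lower bound is uniform, which requires the compactness coming from the \emph{a priori} bound $[u_i]_\Omega \le M_0$ (without it the constant could degenerate as the averages run off to infinity) and the fact that $u_\infty$ is the \emph{unique} positive equilibrium on the conservation manifold, so that the only zero of the entropy on the relevant compact set is $a_\infty$ itself. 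I would either invoke the abstract lemma from \cite[Lemma 2.5]{desvillettes2017trend} directly with the stoichiometric subspace $\mathrm{range}(\W)$, or, if a self-contained argument is preferred, carry out the convexity-plus-compactness argument sketched above.
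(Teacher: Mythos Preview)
Your proof is correct and shares the same overall structure as the paper's: both split $\E(u|u_\infty) = \E(u|[u]_\Omega) + \E([u]_\Omega|u_\infty)$ and handle the two pieces separately, with Step~1 essentially identical to the paper's treatment.

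The genuine difference is in Step~2. The paper handles $\E([u]_\Omega|u_\infty)$ by a direct, term-by-term elementary inequality: using $x\log(x/y) - x + y \ge (\sqrt{x} - \sqrt{y})^2$ and then $|\sqrt{[u_i]_\Omega} - \sqrt{u_{i,\infty}}|^2 \ge (\sqrt{M_0} + \sqrt{u_{i,\infty}})^{-2}|[u_i]_\Omega - u_{i,\infty}|^2$, one immediately obtains $\E([u]_\Omega|u_\infty) \ge C_2\sum_i\|[u_i]_\Omega - u_{i,\infty}\|_{L^1(\Omega)}^2$ with an explicit $C_2$. Your route instead invokes the conservation laws, the uniqueness of the positive equilibrium on the conservation manifold, and a convexity--compactness argument. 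This works, but it is more machinery than needed: since each $\Psi(a_i; a_{i,\infty}) \ge 0$ vanishes if and only if $a_i = a_{i,\infty}$, the sum $\sum_i\Psi(a_i; a_{i,\infty})$ already vanishes only at $a = a_\infty$ on the full box $[0,M_0]^m$, with no need for the stoichiometric constraint or Lemma~\ref{lem:equilibrium}. The paper's approach is shorter, fully explicit, and self-contained; your approach, while correct, imports structural facts about complex balanced systems that are irrelevant to this particular inequality.
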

	\begin{proof}
		It is straightforward to check that the relative entropy satisfies the additivity
		\begin{equation}\label{entropy_additive}
			\E(u|u_\infty) = \E(u|\avO{u}) + \E(\avO{u}|u_\infty),
		\end{equation}
		where 
		\begin{equation}\label{defel}
			\E(\avO{u}|u_\infty) := \sum_i  \bigg( [u_i]_{\Omega}\, \log \bigg( \frac{ [u_i]_{\Omega} }{ u_{i,\infty} } \bigg) - 
			[u_i]_{\Omega} +  u_{i,\infty}  \bigg).
		\end{equation}
		By applying Csisz\'ar-Kullback-Pinsker's inequality for bounded domains, see e.g. \cite[Proposition]{Fellner2016a}, we have
		\begin{equation*}
			\E(u|\avO{u}) = \sum_{i=1}^{m}\int_{\Omega}u_i\log\frac{u_i}{\avO{u_i}}dx\ge C_1\sum_{i=1}^m\|u_i - \avO{u_i}\|_{\LO{1}}^2 ,
		\end{equation*}
		where $C_1$ depends only on $\Omega$ and the spatial dimension $n$. On the other hand, using the elementary inequality $x\log(x/y)-x+y \ge (\sqrt x - \sqrt y)^2$ and the $L^1$-bound \eqref{L1_bound}, we can estimate
		\begin{align*} 
			\E(\avO{u}|u_\infty) \ge \sum_{i=1}^m|\sqrt{\avO{u_i}} - \sqrt{u_{i,\infty}}|^2 &\ge \sum_{i=1}^m\frac{1}{(\sqrt{M_0}+\sqrt{u_{i,\infty}})^2}|\avO{u_i} - u_{i,\infty}|^2\\
			&\ge C_2\sum_{i=1}^m\|\avO{u_i} - u_{i,\infty}\|_{L^1(\Omega)}^2.
		\end{align*}
		The proof of Lemma \ref{lem:CSK} is then completed with $C_{\text{\normalfont CSK}} := \min\{C_1,C_2\}/2$.
	\end{proof}
	
	A crucial tool for proving Theorem \ref{thm:main1} is the following functional inequality.
	\begin{proposition}\label{pro:funtional_inequality}
		Assume \text{\normalfont (i)}--\text{\normalfont (iv)} in Theorem \ref{thm:main1}. Then there exists a constant $\lambda>0$ depending on $\Omega, \uinf, L, y_r, y_r',\beta_j, |\omega_l|$ (see assumption \eqref{B}), such that 
		\begin{align*}
			\D(u) \ge \lambda \, \E(u|\uinf)
		\end{align*}
		for any non-negative function vector $u: \Omega \to \R_+^m$ satisfying $\sum_{i=1}^{m}\intO u_i\log u_i \le L < +\infty$ and the conservation laws \eqref{conservation_laws}.
	\end{proposition}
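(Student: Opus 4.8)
The plan is to use the additivity \eqref{entropy_additive}, $\E(u|\uinf)=\E(u|\avO{u})+\E(\avO{u}|\uinf)$, and to bound each summand by $\D(u)$; one may assume $\D(u)<+\infty$, and the constraint $\sum_i\intO u_i\ln u_i\le L$ supplies an $L^1$-bound $\norm{u_i}_{L^1(\Omega)}\le M_0=M_0(L)$, hence also $\avO{u_i}\le M_0$. For the \emph{inhomogeneous} part $\E(u|\avO{u})=\sum_i\intO u_i\ln\frac{u_i}{\avO{u_i}}\,dx$, I would use hypothesis (ii): with $U_i=\sqrt{u_i}$ the diffusive part of $\D(u)$ is at least $4\sum_i\underline{D}_i\norm{\na U_i}_{L^2(\Omega)}^2$, and a logarithmic Sobolev inequality on the bounded Lipschitz domain $\Omega$, with constant $\clsi$, gives $\E(u|\avO{u})\le\clsi\sum_i\norm{\na U_i}_{L^2(\Omega)}^2\le C\,\D(u)$. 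It then remains to bound the finite-dimensional quantity $\E(\avO{u}|\uinf)$, see \eqref{defel}, by $\D(u)$.

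To this end I would first reduce the reaction part of $\D(u)$ to a spatially homogeneous expression. Since $\Psi\ge0$, each spatial integral in the reaction part may be restricted to the set $\omega_l$ attached, through assumptions \eqref{A} and \eqref{B}, to the strongly connected component $\C_l$ containing reaction $r$, on which $k_r(x,t)\ge\beta_r\underline{\alpha}_l$. The joint convexity of $(w,z)\mapsto\Psi(w;z)$ and Jensen's inequality over $\omega_l$ then give
\begin{equation*}
	\int_{\omega_l}\Psi\bra{\frac{u^{y_r}}{\uinf^{y_r}};\frac{u^{y_r'}}{\uinf^{y_r'}}}\,dx\geq|\omega_l|\,\Psi\bra{\frac{\avo{u^{y_r}}{l}}{\uinf^{y_r}};\frac{\avo{u^{y_r'}}{l}}{\uinf^{y_r'}}}.
\end{equation*}
The crucial point is to replace the $\omega_l$-averages $\avo{u^{y_r}}{l}$ by the powers $\avO{u}^{y_r}$ of the $\Omega$-average, up to errors that $\D(u)$ can absorb. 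I would split $\abs{\avo{u^{y_r}}{l}-\avO{u}^{y_r}}$ into a ``nonlinearity defect'' $\abs{\avo{u^{y_r}}{l}-\avo{u}{l}^{y_r}}$ and an ``averaging defect'' $\abs{\avo{u}{l}^{y_r}-\avO{u}^{y_r}}$. The averaging defect is controlled by $\sum_i\abs{\avo{u_i}{l}-\avO{u_i}}$, hence, through the Poincar\'e--Wirtinger inequality \eqref{P} on $\omega_l$ together with the Poincar\'e inequality on $\Omega$, by $\sum_i\norm{\na U_i}_{L^2(\Omega)}^2$ up to a constant depending on $L$ and $|\omega_l|$. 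The nonlinearity defect requires estimating $\avo{u^{y_r}}{l}-\avo{u}{l}^{y_r}$, for possibly high powers $y_{r,i}$, using only $U_i\in H^1(\Omega)$ and the $L\ln L$-bound; via \eqref{P} on $\omega_l$ and the Sobolev embedding of $U_i$ this should again be controlled by a (possibly sublinear) power of $\sum_i\norm{\na U_i}_{L^2(\Omega)}^2$. Since $\Psi$ is locally Lipschitz on the relevant bounded range, a Young-type argument should then absorb these errors into the remaining diffusion and a small fraction of $\E(\avO{u}|\uinf)$, leaving, for arbitrarily small $\eps>0$,
\begin{equation*}
	\D(u)\gtrsim\sum_{l=1}^s\sum_{r=L_{l-1}+1}^{L_l}\beta_r\,\uinf^{y_r}\,\Psi\bra{\frac{\avO{u}^{y_r}}{\uinf^{y_r}};\frac{\avO{u}^{y_r'}}{\uinf^{y_r'}}}-\eps\,\E(\avO{u}|\uinf).
\end{equation*}

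Finally, since $|\Omega|=1$ the conservation laws \eqref{conservation_laws} force $q_j\cdot\avO{u}=q_j\cdot\uinf$ for all $j$, so $v:=\avO{u}$ lies in the same stoichiometric compatibility class as $\uinf$ and has components $v_i\le M_0$. Under hypotheses (iii) and, crucially, (iv), the classical spatially homogeneous entropy--entropy dissipation estimate for complex balanced reaction networks (see \cite{desvillettes2017trend,fellner2018convergence}; the cancellation of the ``extra sum'' is precisely the computation in Proposition \ref{pro:existence}) provides a constant $c_0>0$, depending only on $\uinf$, the stoichiometric data and $M_0$, such that $\sum_{l}\sum_{r}\beta_r\uinf^{y_r}\Psi\bra{\frac{v^{y_r}}{\uinf^{y_r}};\frac{v^{y_r'}}{\uinf^{y_r'}}}\ge c_0\,\E(v|\uinf)$ for every such $v$. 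Applying this with $v=\avO{u}$, choosing $\eps<c_0/2$ above, and combining with the bound for $\E(u|\avO{u})$, yields $\D(u)\ge\lambda\,\E(u|\uinf)$ with $\lambda$ of the claimed form. I expect the main obstacle to be the nonlinearity-defect estimate: quantitatively comparing $\avo{u^{y_r}}{l}$ and $\avo{u}{l}^{y_r}$ when no $L^\infty$ control is available — only $U_i\in H^1$ and the $L\ln L$-bound — and checking that the a priori sublinear-in-the-diffusion errors it generates can genuinely be absorbed, keeping the final estimate linear in $\E(u|\uinf)$.
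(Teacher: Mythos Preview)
Your overall architecture matches the paper's: split via \eqref{entropy_additive}, handle $\E(u|\avO{u})$ by the log-Sobolev inequality, and reduce the reactive part to a finite-dimensional estimate on the compatibility class of $\uinf$. The genuine gap is exactly where you flag it: the ``nonlinearity defect'' $\abs{\avo{u^{y_r}}{l}-\avo{u}{l}^{y_r}}$. With only $u_i\in L\ln L$ and $U_i\in H^1$, there is no reason for $u^{y_r}=\prod_i u_i^{y_{r,i}}$ even to be integrable when the $y_{r,i}$ are large (Sobolev on $U_i$ gives at best $u_i\in L^{n/(n-2)}$ for $n\ge3$), so your Jensen step may produce $+\infty$ and the subsequent ``$\Psi$ is locally Lipschitz on the relevant bounded range'' claim is unjustified --- you have no lower bound on $\avo{u^{y_r}}{l}$ keeping you away from the logarithmic singularity, and no upper bound either. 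The hoped-for control by a sublinear power of $\sum_i\norm{\na U_i}_{L^2}^2$ does not follow from the available information.

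The paper sidesteps this by never forming $\avo{u^{y_r}}{l}$. Instead it first replaces $\Psi(w;z)$ by the quadratic lower bound $(\sqrt{w}-\sqrt{z})^2$, so the reactive integrand becomes $\bigl(U^{y_r}/\Uinf^{y_r}-U^{y_r'}/\Uinf^{y_r'}\bigr)^2$ with $U_i=\sqrt{u_i}$. The point is that now the basic objects are the $U_i$, which lie in $H^1(\omega_l)$ with $\norm{U_i}_{L^2}^2=\norm{u_i}_{L^1}\le M_0$, so all averages $\avo{U_i}{l}$ are bounded. To pass from $U^{y_r}$ to $\avo{U}{l}^{y_r}$ the paper writes $U_i=\avo{U_i}{l}+\eta_i$ and Taylor-expands, but since $\eta_i$ is only in $L^2$ the remainder $y_{r,i}(\theta\avo{U_i}{l}+(1-\theta)\eta_i)^{y_{r,i}-1}$ is not uniformly bounded. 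This is handled by a domain decomposition (Lemma~\ref{lem2}): on $\Upsilon_l=\{\,|\eta_i|\le\mathfrak m\ \forall i\,\}$ the Taylor remainder is bounded and the expansion goes through, contributing an error $\lesssim\sum_i\int_{\Upsilon_l}\eta_i^2$; on the complement $\Upsilon_l^c$ one simply uses that $\sum_i\eta_i^2\ge\mathfrak m^2$ pointwise, so $\norm{\eta}_{L^2(\omega_l)}^2\gtrsim|\Upsilon_l^c|$, which already dominates the bounded target $\bigl(\avo{U/\Uinf}{l}^{y_r}-\avo{U/\Uinf}{l}^{y_r'}\bigr)^2$. Choosing the weight on the reaction term small then absorbs the $\eta$-error into the diffusion. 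The passage from $\avo{U_i}{l}$ to $\avO{U_i}$ and then to $\sqrt{\avO{u_i}}$ (Lemmas~\ref{lem3}--\ref{lem5} and \eqref{h3_1}--\eqref{h6}) is elementary because all these quantities are bounded by $M_0^{1/2}$. The finite-dimensional step is then the quadratic inequality \eqref{h8} from \cite{fellner2018convergence}, not a $\Psi$-based one.

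In short: keep your global plan, but replace the Jensen-on-$\Psi$ route by the quadratic lower bound on $\Psi$, work throughout with the square roots $U_i$ and their $\omega_l$-averages, and use the domain-decomposition argument of Lemma~\ref{lem2} to tame the Taylor remainder. That is the missing idea.
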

	To prove Proposition \ref{pro:funtional_inequality}, we start with some preliminary results. Recall the notation,
	\begin{equation*}
		U_i = \sqrt{u_i}, \; U = (U_1, \ldots, U_m), \quad U_{i,\infty} = \sqrt{u_{i,\infty}}, \quad \Uinf = (U_{1,\infty}, \ldots, U_{m,\infty}),
	\end{equation*}
	and for any measurable set $A$, 
	\begin{equation*}
		\left[f\right]_{A}:= \frac{1}{|A|}\int_{A}f(x)dx.
	\end{equation*}
	\begin{lemma}\label{lem1}
		We have the following bounds
		\begin{equation*}
			\avO{U_i^2} + \avo{U_i^2}{l} + \avO{U_i} + \avo{U_i}{l} \lesssim 1 ,
		\end{equation*}
		for all $i = 1,\ldots, m$ and all $l = 1,\ldots, s$.
	\end{lemma}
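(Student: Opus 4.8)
The plan is to extract these $L^1$-type bounds directly from the two pieces of information we are handed: the global entropy bound $\sum_i \intO u_i \log u_i \le L$ together with the nonnegativity of $u$, and the conservation laws \eqref{conservation_laws} (which, as in Lemma \ref{lem0}, imply a uniform $L^1(\Omega)$ bound $\|u_i\|_{L^1(\Omega)} \le M_0$). Since $|\Omega| = 1$, the quantity $\avO{U_i^2} = \avO{u_i} = \intO u_i\,dx = \|u_i\|_{L^1(\Omega)}$, so the bound on the first term is immediate from the conservation laws. For the partial average $\avo{U_i^2}{l} = \frac{1}{|\omega_l|}\int_{\omega_l} u_i\,dx \le \frac{1}{|\omega_l|}\|u_i\|_{L^1(\Omega)} \le \frac{M_0}{|\omega_l|}$, which is bounded since $|\omega_l| > 0$ is a fixed positive constant (and the implied constant in $\lesssim$ is allowed to depend on $|\omega_l|$, cf. the statement of Proposition \ref{pro:funtional_inequality}).

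For the square-root averages, I would use Jensen's inequality applied to the concave function $x \mapsto \sqrt{x}$: since $|\Omega| = 1$,
\begin{equation*}
	\avO{U_i} = \intO \sqrt{u_i}\,dx \le \sqrt{\intO u_i\,dx} = \sqrt{\avO{u_i}} \lesssim 1,
\end{equation*}
and similarly, writing the average over $\omega_l$ as an average against the probability measure $\frac{1}{|\omega_l|}\mathbf{1}_{\omega_l}\,dx$,
\begin{equation*}
	\avo{U_i}{l} = \frac{1}{|\omega_l|}\int_{\omega_l}\sqrt{u_i}\,dx \le \sqrt{\frac{1}{|\omega_l|}\int_{\omega_l} u_i\,dx} = \sqrt{\avo{u_i}{l}} \le \sqrt{\frac{M_0}{|\omega_l|}} \lesssim 1.
\end{equation*}
Adding the four estimates gives the claim, with the implied constant depending only on $M_0$ and $\min_l |\omega_l|$.

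I do not expect any genuine obstacle here; this is a bookkeeping lemma. The only point requiring a moment's care is that the entropy bound $L$ is not actually needed for these particular inequalities — the conservation-law-induced $L^1$ bound suffices — so one should simply be careful to invoke \eqref{L1_bound} (or re-derive the $L^1$ bound from \eqref{conservation_laws} under the hypotheses of Proposition \ref{pro:funtional_inequality}) rather than the entropy hypothesis. The entropy bound $L$ will instead be used later, to prevent the concentrations from degenerating in the reaction-dissipation estimates; it plays no role in Lemma \ref{lem1}.
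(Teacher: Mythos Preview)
Your proof is correct and follows essentially the same route as the paper: both derive the $L^1$ bounds on $U_i^2 = u_i$ directly from \eqref{L1_bound}, and then control the square-root averages by comparing $\avO{U_i}$ with $\sqrt{\avO{U_i^2}}$. You phrase this last step as Jensen's inequality for the concave function $\sqrt{\cdot}$, while the paper phrases it as Cauchy--Schwarz; these are of course the same inequality in this context.
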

	\begin{proof}
		The estimates
		\begin{equation*}
			\avO{U_i^2} + \avo{U_i^2}{l} \lesssim 1
		\end{equation*}
		follow directly from \eqref{L1_bound}. By Cauchy-Schwarz  inequality
		\begin{equation*}
			\avO{U_i} = \int_{\Omega}U_i(x)dx \lesssim \bra{\int_{\Omega}U_i^2(x)dx}^{1/2} \lesssim 1 ,
		\end{equation*}
		and the estimate $\avo{U_i}{l} \lesssim 1$ can be obtained similarly.
	\end{proof}
	An immediate estimate is
	\begin{equation}\label{h1}
		\begin{aligned}
			&\sum_{i=1}^{m}\int_{\Omega}D_i(x,t)\nabla u_i\cdot \frac{\nabla u_i}{u_i}dx + \sum_{r=1}^{R}\int_{\Omega}k_r(x,t)\uinf^{y_{r}}\Psi\bra{\frac{u^{y_r}}{\uinf^{y_r}}; \frac{u^{y_r'}}{\uinf^{y_r'}}}dx\\
			&\quad = \sum_{i=1}^{m}\int_{\Omega}D_i(x,t)\nabla u_i\cdot \frac{\nabla u_i}{u_i}dx + \sum_{l=1}^{s}\sum_{j=L_{l-1}+1}^{L_l}\int_{\Omega}k_j(x,t)\uinf^{y_{j}}\Psi\bra{\frac{u^{y_j}}{\uinf^{y_j}}; \frac{u^{y_j'}}{\uinf^{y_j'}}}dx\\
			&\quad \gtrsim \sum_{i=1}^{m}\int_{\Omega}\frac{|\nabla u_i|^2}{u_i}dx + \sum_{l=1}^{s}\sum_{j=L_{l-1}+1}^{L_l}\int_{\omega_l}\uinf^{y_{j}}\Psi\bra{\frac{u^{y_j}}{\uinf^{y_j}}; \frac{u^{y_j'}}{\uinf^{y_j'}}}dx ,
		\end{aligned}
	\end{equation}
	where we use the assumptions \eqref{B}, (ii) in Theorem \ref{thm:main1}, and the rewriting
	\begin{equation*}
		D_i(x,t)\nabla u_i\cdot\frac{\nabla u_i}{u_i} = 4\nabla\sqrt{u_i}^{\top} D_i(x,t) \nabla \sqrt{u_i} \ge 4\underline{D}_i|\nabla \sqrt{u_i}|^2 = \underline{D}_i\frac{|\nabla u_i|^2}{u_i}.
	\end{equation*}
	\begin{lemma}\label{lem2}
		For each $l\in \{1,\ldots, s\}$, it holds
		\begin{equation*}
			\begin{aligned}
				\sum_{i=1}^m\intO\frac{|\na u_i|^2}{u_i}dx +  \sum_{j=L_{l-1}+1}^{L_l}\int_{\omega_l}\uinf^{y_{j}}\Psi\bra{\frac{u^{y_j}}{\uinf^{y_j}}; \frac{u^{y_j'}}{\uinf^{y_j'}}}dx\\
				\gtrsim\sum_{j=L_{l-1}+1}^{L_l} \left(\avo{\frac{U}{\Uinf}}{l}^{y_j} - \avo{\frac{U}{\Uinf}}{l}^{y_j'}\right)^2.
			\end{aligned}
		\end{equation*}
		Here we recall the notation
		\begin{equation*}
			\avo{\frac{U}{\Uinf}}{l}^{y_j} = \prod_{i=1}^m\avo{\frac{U_i}{U_{i,\infty}}}{l}^{y_{j,i}} = \prod_{i=1}^m\frac{\avo{U_i}{l}^{y_{j,i}}}{U_{i,\infty}^{y_{j,i}}}.
		\end{equation*}
	\end{lemma}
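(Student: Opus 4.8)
The plan is to bound the "reaction distance" $\avo{\frac{U}{\Uinf}}{l}^{y_j} - \avo{\frac{U}{\Uinf}}{l}^{y_j'}$ by two contributions: one measuring how far $u$ is from being spatially constant on $\omega_l$ (controlled by the diffusion/gradient terms), and one measuring how far the spatial average satisfies the reaction balance on $\omega_l$ (controlled by the $\Psi$-terms). Concretely, write
\begin{align*}
	\avo{\frac{U}{\Uinf}}{l}^{y_j} - \avo{\frac{U}{\Uinf}}{l}^{y_j'}
	&= \left(\avo{\frac{U}{\Uinf}}{l}^{y_j} - \avo{\frac{u^{y_j}}{\uinf^{y_j}}}{l}\right)
	+ \left(\avo{\frac{u^{y_j}}{\uinf^{y_j}}}{l} - \avo{\frac{u^{y_j'}}{\uinf^{y_j'}}}{l}\right)\\
	&\quad + \left(\avo{\frac{u^{y_j'}}{\uinf^{y_j'}}}{l} - \avo{\frac{U}{\Uinf}}{l}^{y_j'}\right),
\end{align*}
and estimate each of the three terms on the right. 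The outer two terms compare the average of a monomial $\prod_i U_i^{2y_{j,i}}$ (up to the constant $\uinf^{-y_j}$) with the product of the averages $\prod_i \avo{U_i}{l}^{2y_{j,i}}$; both are controlled by the oscillation of the $U_i$ over $\omega_l$. The middle term is exactly $\avo{\frac{u^{y_j}}{\uinf^{y_j}} - \frac{u^{y_j'}}{\uinf^{y_j'}}}{l}$, whose square should be dominated by the $\Psi$-term $\int_{\omega_l}\uinf^{y_j}\Psi\!\left(\frac{u^{y_j}}{\uinf^{y_j}};\frac{u^{y_j'}}{\uinf^{y_j'}}\right)dx$.

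First I would handle the oscillation terms. Using the algebraic identity $\prod a_i - \prod b_i = \sum_k \big(\prod_{i<k}a_i\big)(a_k-b_k)\big(\prod_{i>k}b_i\big)$ with $a_i$ pointwise powers of $U_i/U_{i,\infty}$ and $b_i$ their $\omega_l$-averages, together with the uniform bounds $\avo{U_i^2}{l}\lesssim 1$ and $\avo{U_i}{l}\lesssim 1$ from Lemma \ref{lem1} (and the lower bound $U_{i,\infty}>0$ from strict positivity of the CBE), the oscillation term reduces to a finite sum of quantities of the form $\avo{\prod_i U_i^{p_i}\,(U_k^{q}-\avo{U_k}{l}^{q})}{l}$ for bounded exponents; Cauchy--Schwarz plus Hölder, and the elementary bound $|x^q - y^q|\lesssim |x-y|$ for $x,y$ in a bounded set (with the exponents $y_{j,i}$ possibly non-integer, so one uses $|x^q-y^q| \le q\max(x,y)^{q-1}|x-y|$ for $q\ge1$, and the pointwise control of the averaged powers for $q<1$), bound everything by $\sum_i \avo{(U_i - \avo{U_i}{l})^2}{l}^{1/2}$ up to constants. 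Then the Poincaré--Wirtinger inequality \eqref{P} on $\omega_l$ — available by assumption \eqref{B} — gives $\avo{(U_i - \avo{U_i}{l})^2}{l} \lesssim \int_{\omega_l}|\na U_i|^2 dx \lesssim \int_\Omega \frac{|\na u_i|^2}{u_i}dx$, since $|\na U_i|^2 = \frac{|\na u_i|^2}{4u_i}$.

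For the middle term, I would use the pointwise inequality $\Psi(w;z) = w\log(w/z) - w + z \ge c\,(\sqrt w - \sqrt z)^2 \ge c'\,\frac{(w-z)^2}{w+z}$, and then, since $w = u^{y_j}/\uinf^{y_j}$ and $z = u^{y_j'}/\uinf^{y_j'}$ are both controlled in $L^1(\omega_l)$ by Lemma \ref{lem1} (the monomials $u^{y_j}$ are products of powers of $u_i$ with $\sum_i 2 y_{j,i}$ possibly large, so one needs $u_i^{2y_{j,i}}\in L^1$ — this follows from the entropy bound $\sum_i\int u_i\log u_i\le L$ only after an additional argument). This is where I expect the main obstacle. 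The clean Jensen/Cauchy--Schwarz step $\avo{(w-z)^2}{l} \le \avo{(w-z)^2(w+z)}{l}\,\avo{(w+z)^{-1}}{l}$ would need a reverse control on $\avo{(w+z)^{-1}}{l}$, which is false in general; instead one should split $\omega_l$ into the region where $w+z$ is bounded (direct) and where it is large, and on the latter use that the entropy-type bound on $u^{y_j}\log u^{y_j}$ forces the $L^1$-mass of $u^{y_j}$ on $\{u^{y_j}\text{ large}\}$ to be small, so $\avo{|w-z|}{l}$ — hence its square via the quantitative control — is absorbed. Assembling the three estimates, squaring, and summing over $j$ from $L_{l-1}+1$ to $L_l$ yields the claimed inequality, with the implicit constant depending on $\Omega$, $\uinf$, $L$, the stoichiometric vectors, and $|\omega_l|$ through the Poincaré constant $C_{\omega_l}$.
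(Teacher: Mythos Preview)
Your decomposition introduces quantities that are not known to be finite. The middle term $\avo{\frac{u^{y_j}}{\uinf^{y_j}}}{l} - \avo{\frac{u^{y_j'}}{\uinf^{y_j'}}}{l}$ and the outer oscillation terms all require $u^{y_j}=\prod_i u_i^{y_{j,i}}$ to be in $L^1(\omega_l)$. But under the standing hypotheses you only have $u_i\log u_i\in L^1$ (equivalently $U_i\in L^2$), and since the stoichiometric exponents $y_{j,i}\in\{0\}\cup[1,\infty)$ can be arbitrarily large, there is no reason for $u^{y_j}$ to be integrable. You flag this yourself (``this is where I expect the main obstacle''), but the proposed fix --- splitting into $\{w+z\text{ bounded}\}$ and $\{w+z\text{ large}\}$ and invoking an entropy-type bound on $u^{y_j}\log u^{y_j}$ --- does not work: no such bound is available, and even the Cauchy--Schwarz step $\avo{w-z}{l}^2 \le \avo{\frac{(w-z)^2}{w+z}}{l}\,\avo{w+z}{l}$ already needs $\avo{w+z}{l}<\infty$, which is exactly what fails.

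The paper bypasses the integrability problem by never forming $\avo{u^{y_j}}{l}$. After applying $\Psi(w;z)\ge(\sqrt w-\sqrt z)^2$, it works pointwise with $U^{y_j}/\Uinf^{y_j}-U^{y_j'}/\Uinf^{y_j'}$ and introduces a domain decomposition of $\omega_l$: on the ``good'' set $\Upsilon_l=\{x\in\omega_l:\ |U_i(x)-\avo{U_i}{l}|\le\mathfrak m\ \forall i\}$ every $U_i$ is pointwise bounded (since $\avo{U_i}{l}\lesssim 1$ by Lemma~\ref{lem1}), so a first-order Taylor expansion of $U_i^{y_{j,i}}$ around $\avo{U_i}{l}$ has uniformly bounded remainder, and the elementary inequality $(a-b)^2\ge \tfrac12 a^2-b^2$ extracts $|\Upsilon_l|$ times the target square minus an absorbable error $\sum_i\int_{\Upsilon_l}|\eta_i|^2$. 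On the ``bad'' set $\Upsilon_l^c$ one simply uses $|\Upsilon_l^c|\le \mathfrak m^{-2}\sum_i\|\eta_i\|_{L^2(\omega_l)}^2$ together with the fact that the target square is uniformly bounded by Lemma~\ref{lem1}. A $\delta$-weighting then kills the error term. This domain splitting is the key idea you are missing; without it the high-order monomials cannot be handled under mere $L\log L$ control.
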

	\begin{proof}
		By Poincar\'e-Wirtinger inequality \blue{and assumption \eqref{B}}, we have
		\begin{equation*}
			\int_{\Omega}\frac{|\na u_i|^2}{u_i}dx = 4\intO |\na U_i|^2dx \gtrsim \int_{\omega_l}|\na U_i|^2dx \gtrsim \|U_i - \avo{U_i}{l}\|_{L^2(\omega_l)}^2.
		\end{equation*}
		By using the elementary inequality $\Psi(x;y) = x\log(x/y) - x + y \ge (\sqrt x - \sqrt y)^2$ we have
		\begin{equation}\label{f0}
			\begin{aligned}
				\sum_{i=1}^m\|U_i - \avo{U_i}{l}\|_{L^2(\omega_l)}^2 +  \sum_{j=L_{l-1}+1}^{L_l}\int_{\omega_l}\uinf^{y_{j}}\Psi\bra{\frac{u^{y_j}}{\uinf^{y_j}}; \frac{u^{y_j'}}{\uinf^{y_j'}}}dx\\
				\gtrsim\sum_{i=1}^m\|U_i - \avo{U_i}{l}\|_{L^2(\omega_l)}^2 + \sum_{j=L_{l-1}+1}^{L_l}\int_{\omega_l} \left(\frac{U^{y_j}}{\Uinf^{y_j}} - \frac{U^{y_j'}}{\Uinf^{y_j'}}\right)^2dx,
			\end{aligned}
		\end{equation}	
		\blue{where we used the strict positivity of $\uinf$.}
		For any $i = L_{l-1}+1, \ldots, L_l$, we use the notation
		\begin{equation*}
			\eta_i(x):= U_i(x) - \avo{U_{i}}{l}, \; x\in\omega_l.
		\end{equation*}
		Fix a constant $\mathfrak{m}>0$, we consider the domain decomposition
		\begin{equation*}
			\blue{\omega_l = \Upsilon_l\cap \Upsilon^{c}_l}
		\end{equation*}
		where \blue{$\Upsilon_{l}:= \{x\in\omega_l: |\eta_i(x)| \le \mathfrak{m} \text{ for all } i\in \{1,\ldots, m\}\}$},
		and \blue{$\Upsilon^{c}_l = \omega_l\backslash\Upsilon_l$}. By Taylor's expansion
		\begin{equation*}
			U_i(x)^{y_{j,i}} = \bra{\avo{U_i}{l} + \eta_i(x)}^{y_{j,i}} = \avo{U_i}{l}^{y_{j,i}} + \wt{R}_i\eta_i(x),
		\end{equation*}
		with
		\begin{equation*}
			\wt{R}_i(x) = y_{j,i}(\theta\avo{U_i}{l} + (1-\theta)\eta_i(x))^{y_{j,i}-1} \text{ for some } \blue{\theta = \theta(i,j,l,x)\in (0,1)}.
		\end{equation*}
		Thanks to Lemma \ref{lem1} and the definition of $\Upsilon_l$, it holds that 
		\begin{equation}\label{f1}
			|\wt{R}_i(x)| \lesssim_{\mathfrak{m}} 1, \quad \forall x\in \Upsilon_l.
		\end{equation}
		Therefore, using $|\eta_i(x)| \le \mathfrak{m}$ in $\Upsilon_l$, \eqref{f1}, and the elementary inequality $(x-y)^2 \ge x^2/2 - y^2$, we can estimate
		\begin{equation}\label{f2}
			\begin{aligned}
				&\sum_{j=L_{l-1}+1}^{L_l}\int_{\Upsilon_l} \left(\frac{U^{y_j}}{\Uinf^{y_j}} - \frac{U^{y_j'}}{\Uinf^{y_j'}}\right)^2dx\\
				&= \sum_{j=L_{l-1}+1}^{L_l}\int_{\Upsilon_l}\bra{\prod_{i=1}^{m}\frac{\avo{U_i}{l}^{y_{j,i}} + \wt{R}_i\eta_i(x)}{U_{i,\infty}^{y_{j,i}}} - \prod_{i=1}^{m}\frac{\avo{U_i}{l}^{y_{j,i}'} + \wt{R}_i\eta_i(x)}{U_{i,\infty}^{y_{j,i}'}}}^2   \, dx\\
				&\ge \frac 12|\Upsilon_l| \sum_{j=L_{l-1}+1}^{L_l}\bra{\avo{\frac{U}{U_\infty}}{l}^{y_j} - \avo{\frac{U}{U_\infty}}{l}^{y_j'}}^2 - \mathcal{C}\sum_{i=1}^{m}\int_{\Upsilon_l}|\eta_i(x)|^2dx
			\end{aligned}
		\end{equation}
		where $\mathcal{C} = \mathcal{C}(\mathfrak{m})$. On the other hand, on $\Upsilon^c_l$ we know that there exists \blue{$i_0\in \{1,\ldots, m\}$} such that $|\eta_{i_0}(x)|\ge \mathfrak{m}$. Thus, we estimate (using first Cauchy-Schwarz inequality, and then Lemma \ref{lem1}) 
		\begin{equation}\label{f3}
			\begin{aligned}
				\sum_{i=1}^{m}\|U_i - \avo{U_i}{l}\|_{L^2(\omega_l)}^2 &= \int_{\omega_l}\sum_{i=1}^m|\eta_i(x)|^2dx\\
				&\ge \frac{1}{m}\int_{\Upsilon^c_l}\bra{\sum_{i=1}^m|\eta_i(x)|}^2dx\\
				&\ge \frac{\mathfrak{m}^2}{m}|\Upsilon^c_l|			 \gtrsim_{u_\infty,\mathfrak{m}}|\Upsilon^c_l|\sum_{j=L_{l-1}+1}^{L_l}\bra{\avo{\frac{U}{U_\infty}}{l}^{y_j} - \avo{\frac{U}{U_\infty}}{l}^{y_j'}}^2 .
			\end{aligned}
		\end{equation}
		From \eqref{f1} and \eqref{f2}, we can estimate for any $\delta\in (0,1)$, recalling that $\eta_i(x):= U_i(x) - \avo{U_{i}}{l}$,
		\begin{equation}\label{f4}
			\begin{aligned}
				\text{RHS of (\ref{f0})}&\ge \frac 12\sum_{i=1}^m\|\eta_i\|_{L^2(\omega_l)}^2 + \frac 12\sum_{i=1}^m\|\eta_i\|_{L^2(\omega_l)}^2 + \delta\sum_{j=L_{l-1}+1}^{L_l}\int_{\Upsilon_l} \left(\frac{U^{y_j}}{\Uinf^{y_j}} - \frac{U^{y_j'}}{\Uinf^{y_j'}}\right)^2dx\\
				& \gtrsim_{\mathfrak{m}  } \frac 12\sum_{i=1}^m\|\eta_i\|_{L^2(\omega_l)}^2 +  \bra{|\Upsilon^c_l| + \frac{\delta}{2}|\Upsilon_l|}\sum_{j=L_{l-1}+1}^{L_l}\bra{\avo{\frac{U}{U_\infty}}{l}^{y_j} - \avo{\frac{U}{U_\infty}}{l}^{y_j'}}^2\\
				&\qquad - \delta \mathcal{C}\sum_{i=1}^m\int_{\Upsilon_l}|\eta_i|^2dx\\
				&\gtrsim_{\mathfrak{m} }|\omega_l|\sum_{j=L_{l-1}+1}^{L_l}\bra{\avo{\frac{U}{U_\infty}}{l}^{y_j} - \avo{\frac{U}{U_\infty}}{l}^{y_j'}}^2,
			\end{aligned}
		\end{equation}
		where we choose $\delta$ small enough depending on $\mathcal{C}$, and consequently on $\mathfrak{m}$. It is remarked that the last inequality is {\it not dependent on $\Upsilon_l$}. This last estimate and \eqref{f0} allow to conclude the proof of Lemma \ref{lem2}.
	\end{proof}
	\begin{remark}
		Clearly $\mathfrak{m}$ can be arbitrary in the proof of Lemma \ref{lem2}, and one can choose, for instance, $\mathfrak{m} = 1$. We chose to write $\mathfrak{m}$ as a constant to leave the room for optimising constants in the desired inequality.
	\end{remark}
	\begin{lemma}\label{lem3}
		For each $l \in \{1,\ldots, s\}$, it holds
		\begin{equation*}
			\sum_{j=L_{l-1}+1}^{L_l} \left(\avo{\frac{U}{\Uinf}}{l}^{y_j} - \avo{\frac{U}{\Uinf}}{l}^{y_j'}\right)^2 \gtrsim \sum_{j=L_{l-1}+1}^{L_l} \left(\avO{\frac{U}{\Uinf}}^{y_j} - \avO{\frac{U}{\Uinf}}^{y_j'}\right)^2 - \sum_{i=1}^m\abs{\avO{U_i} - \avo{U_i}{l}}^2.
		\end{equation*}
	\end{lemma}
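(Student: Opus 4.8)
The plan is to compare the averages over the small subdomain $\omega_l$ with the averages over the whole domain $\Omega$, term by term in each monomial $\left(\frac{U}{\Uinf}\right)^{y_j}$. The natural strategy is to show that for each $l$ and each reaction $j$ in the $l$-th component, the quantity $\avo{\frac{U}{\Uinf}}{l}^{y_j}$ differs from $\avO{\frac{U}{\Uinf}}^{y_j}$ by a controlled error, namely something bounded (up to a constant) by $\sum_i \abs{\avO{U_i} - \avo{U_i}{l}}$. Indeed, since $y_j,y_j'$ are fixed finite exponents and the bases $\avo{U_i}{l}$, $\avO{U_i}$ are all bounded by Lemma \ref{lem1}, one can pass from a product $\prod_i c_i^{y_{j,i}}$ to $\prod_i d_i^{y_{j,i}}$ by a telescoping sum, each factor of which is Lipschitz in $c_i$ (with constant depending on the uniform bounds and on the exponents), so
\begin{equation*}
	\abs{\avo{\frac{U}{\Uinf}}{l}^{y_j} - \avO{\frac{U}{\Uinf}}^{y_j}} \lesssim \sum_{i=1}^m \abs{\avo{U_i}{l} - \avO{U_i}}.
\end{equation*}
The same holds with $y_j'$ in place of $y_j$.

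Given this, I would write
\begin{equation*}
	\avo{\frac{U}{\Uinf}}{l}^{y_j} - \avo{\frac{U}{\Uinf}}{l}^{y_j'} = \left(\avO{\frac{U}{\Uinf}}^{y_j} - \avO{\frac{U}{\Uinf}}^{y_j'}\right) + E_j,
\end{equation*}
where the error $E_j$ satisfies $\abs{E_j} \lesssim \sum_{i=1}^m \abs{\avo{U_i}{l} - \avO{U_i}}$. Squaring and using the elementary inequality $(a+b)^2 \ge \tfrac12 a^2 - b^2$ gives
\begin{equation*}
	\left(\avo{\frac{U}{\Uinf}}{l}^{y_j} - \avo{\frac{U}{\Uinf}}{l}^{y_j'}\right)^2 \ge \frac12 \left(\avO{\frac{U}{\Uinf}}^{y_j} - \avO{\frac{U}{\Uinf}}^{y_j'}\right)^2 - \abs{E_j}^2,
\end{equation*}
and summing over $j$ from $L_{l-1}+1$ to $L_l$, then absorbing the constant $\tfrac12$ and the number of reactions into the $\gtrsim$, yields the claimed inequality once we note $\abs{E_j}^2 \lesssim \sum_{i=1}^m \abs{\avo{U_i}{l} - \avO{U_i}}^2$ by Cauchy–Schwarz.

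The main obstacle is the Lipschitz estimate on the monomials when some exponent $y_{j,i}$ lies in $(0,1)$: the map $c \mapsto c^{y_{j,i}}$ is \emph{not} Lipschitz near $c=0$. This is exactly the same difficulty handled in the proof of Lemma \ref{lem2} via the domain decomposition into $\Upsilon_l$ and $\Upsilon_l^c$, and I expect one must argue similarly here, or — more simply — observe that the relevant bases $\avo{U_i}{l}$ and $\avO{U_i}$ are \emph{averages} of $U_i$, so the telescoping should be done at the level of the averaged quantities directly, writing each monomial difference as a sum over $i$ of terms of the form $\big(\prod_{i'<i}(\cdot)\big)\big(\avo{U_i}{l}^{y_{j,i}} - \avO{U_i}^{y_{j,i}}\big)\big(\prod_{i'>i}(\cdot)\big)$ and then using that $\abs{a^{y} - b^{y}} \le \abs{a-b}$ when $y \le 1$ together with a straightforward bound when $y \ge 1$ via the uniform boundedness from Lemma \ref{lem1}. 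If a genuinely problematic case remains (a fractional exponent with both bases possibly small), one falls back on the $\mathfrak m$-decomposition argument of Lemma \ref{lem2}; but since here we compare two averaged quantities rather than pointwise values, I expect the elementary monomial bound to suffice without reintroducing the decomposition.
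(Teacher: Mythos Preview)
Your proposal is correct and follows essentially the same approach as the paper: write the $\omega_l$-averaged monomial difference as the $\Omega$-averaged one plus an error controlled by $\sum_i|\avO{U_i}-\avo{U_i}{l}|$, then apply $(a+b)^2\ge\tfrac12 a^2-b^2$ and sum over $j$. The paper obtains the error bound via a first-order Taylor expansion of each factor $(\avO{U_i}-\gamma_{i,l})^{y_{j,i}}$ (with $\gamma_{i,l}:=\avO{U_i}-\avo{U_i}{l}$ bounded by Lemma~\ref{lem1}), whereas you use a telescoping product; these are interchangeable ways of extracting the same Lipschitz estimate.

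Your worry about exponents in $(0,1)$ is unnecessary here: the paper assumes $y_{r,i},y_{r,i}'\in\{0\}\cup[1,\infty)$, so every nontrivial factor $c\mapsto c^{y_{j,i}}$ has exponent at least $1$ and is Lipschitz on the bounded range guaranteed by Lemma~\ref{lem1}. There is no need to invoke the $\mathfrak m$-decomposition from Lemma~\ref{lem2}.
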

	\begin{proof}
		Denote by $\gamma_{i,l} = \avO{U_i} - \avo{U_i}{l}$. It follows from Lemma \ref{lem1} that
		\begin{equation*}
			|\gamma_{i,l}| \lesssim 1 \quad \forall i=1,\ldots, m, \quad \forall l=1,\ldots, s.
		\end{equation*}
		By Taylor's expansion,
		\begin{align*}
			\avo{\frac{U}{\Uinf}}{l}^{y_j} - \avo{\frac{U}{\Uinf}}{l}^{y_j'} &= \prod_{i=1}^{m}\frac{\bra{\avO{U_i} - \gamma_{i,l}}^{y_{j,i}}}{U_{i,\infty}^{y_{j,i}}} - \prod_{i=1}^{m}\frac{\bra{\avO{U_i} - \gamma_{i,l}}^{y_{j,i}'}}{U_{i,\infty}^{y_{j,i}'}}\\
			&= \prod_{i=1}^{m}\frac{\avO{U_i}^{y_{j,i}} - \gamma_{i,l}\RR(\avO{U_i},\gamma_{i,l},y_j)}{U_{i,\infty}^{y_{j,i}}} - \prod_{i=1}^{m}\frac{\avO{U_i}^{y_{j,i}'} - \gamma_{i,l}\RR(\avO{U_i},\gamma_{i,l},y_j')}{U_{i,\infty}^{y_{j,i}'}}\\
			&= \avO{\frac{U}{\Uinf}}^{y_j} - \avO{\frac{U}{\Uinf}}^{y_j'} - \wt{\RR}(\avO{U}, \gamma_{i,l}, y_j,y_j',\Uinf) \sum_{i=1}^m|\gamma_{i,l}| ,
		\end{align*}
		where \blue{$\RR(\cdot)$ denote the rest terms from Taylor expansions and $\tilde \RR$ is computed from $\RR$}. Thanks to Lemma \ref{lem1} and the bounds of $\gamma_{i,l}$,
		\begin{equation*}
			|\RR(\avO{U_i},\gamma_{i,l},y_j)| + |\RR(\avO{U_i},\gamma_{i,l},y_j')| +  |\wt{\RR}(\avO{U}, \gamma_{i,l}, y_j,y_j',\Uinf)| \lesssim 1.
		\end{equation*}
		Therefore, by using the elementary inequality $(x + y)^2 \ge \frac 12x^2 - y^2$, we have
		\begin{align*}
			\bra{\avo{\frac{U}{\Uinf}}{l}^{y_j} - \avo{\frac{U}{\Uinf}}{l}^{y_j'}}^2 &\gtrsim \frac 12 \bra{\avO{\frac{U}{\Uinf}}^{y_j} - \avO{\frac{U}{\Uinf}}^{y_j'}}^2 - \sum_{i=1}^m|\gamma_{i,l}|^2 .
		\end{align*}
		By summing for $j=L_{l-1}+1, \ldots, L_{l}$, we can conclude the proof of Lemma \ref{lem3}.
	\end{proof}
	\begin{lemma}\label{lem4}
		It holds that
		\begin{equation*}
			\sum_{i=1}^m\intO \frac{|\nabla u_i|^2}{u_i}dx \gtrsim \sum_{l=1}^{s}\sum_{i=1}^{m}\abs{\avO{U_i} - \avo{U_i}{l}}^2.
		\end{equation*}
	\end{lemma}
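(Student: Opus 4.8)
The plan is to reduce this to the Poincar\'e--Wirtinger inequality on $\Omega$ together with Cauchy--Schwarz, using that each $\omega_l\subset\Omega$ and that $|\Omega|=1$. Writing $u_i=U_i^2$ we have $\intO\frac{|\na u_i|^2}{u_i}dx=4\intO|\na U_i|^2dx$, so the left-hand side equals $4\sum_{i=1}^m\|\na U_i\|_{L^2(\Omega)}^2$; if this is infinite there is nothing to prove, and otherwise $U_i\in H^1(\Omega)$ (indeed $U_i\in L^2(\Omega)$ by Lemma~\ref{lem1}), so the inequality \eqref{P} is available on $\Omega$, which is legitimate since $\Omega$ is bounded with Lipschitz boundary and hence satisfies \eqref{P}.

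Fix $i\in\{1,\dots,m\}$ and $l\in\{1,\dots,s\}$. Since $|\Omega|=1$, $\avO{U_i}=\intO U_i\,dx$, and subtracting this constant inside the average over $\omega_l$ gives
\[
\avO{U_i}-\avo{U_i}{l}=\frac{1}{|\omega_l|}\int_{\omega_l}\bigl(\avO{U_i}-U_i(x)\bigr)dx.
\]
By Cauchy--Schwarz and $\omega_l\subset\Omega$, this is bounded in absolute value by
\[
\frac{1}{|\omega_l|^{1/2}}\bigl\|U_i-\avO{U_i}\bigr\|_{L^2(\omega_l)}\le\frac{1}{|\omega_l|^{1/2}}\bigl\|U_i-\avO{U_i}\bigr\|_{L^2(\Omega)},
\]
and then \eqref{P} on $\Omega$ gives $\|U_i-\avO{U_i}\|_{L^2(\Omega)}^2\le C_\Omega^{-1}\|\na U_i\|_{L^2(\Omega)}^2$. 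Hence
\[
\bigl|\avO{U_i}-\avo{U_i}{l}\bigr|^2\le\frac{1}{C_\Omega\,|\omega_l|}\intO|\na U_i|^2dx.
\]

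Summing over all $i$ and $l$ and recalling $\intO|\na U_i|^2dx=\frac14\intO\frac{|\na u_i|^2}{u_i}dx$ yields
\[
\sum_{l=1}^s\sum_{i=1}^m\bigl|\avO{U_i}-\avo{U_i}{l}\bigr|^2\le\frac{1}{4C_\Omega}\Bigl(\sum_{l=1}^s\frac{1}{|\omega_l|}\Bigr)\sum_{i=1}^m\intO\frac{|\na u_i|^2}{u_i}dx,
\]
which is the assertion, with implied constant $\frac{1}{4C_\Omega}\sum_{l=1}^s|\omega_l|^{-1}$ depending only on $\Omega$ and the measures $|\omega_l|$. I do not expect any genuine obstacle: the only points needing care are that the Poincar\'e inequality is applied on $\Omega$ (not on the possibly rough sets $\omega_l$), which is exactly where the Lipschitz regularity of $\partial\Omega$ enters, and the trivial observation that the statement is vacuous when the left-hand side is $+\infty$.
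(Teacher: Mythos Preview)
Your proof is correct and takes a slightly different, in fact simpler, route than the paper. The paper bounds $\bigl|\avO{U_i}-\avo{U_i}{l}\bigr|^2$ by first controlling \emph{both} $\|U_i-\avO{U_i}\|_{L^2(\omega_l)}^2$ (via Poincar\'e--Wirtinger on $\Omega$ and restriction to $\omega_l$) and $\|U_i-\avo{U_i}{l}\|_{L^2(\omega_l)}^2$ (via Poincar\'e--Wirtinger on $\omega_l$, which is available by assumption \eqref{B}), and then using the pointwise inequality $|U_i-a|^2+|U_i-b|^2\ge\frac12|a-b|^2$ integrated over $\omega_l$. You instead write $\avO{U_i}-\avo{U_i}{l}$ directly as the $\omega_l$-average of $\avO{U_i}-U_i$ and apply Cauchy--Schwarz, which needs only Poincar\'e on $\Omega$ and the fact that $|\omega_l|>0$. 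Your argument therefore avoids invoking \eqref{P} on $\omega_l$ altogether for this lemma (though that hypothesis is still used elsewhere, in Lemma~\ref{lem2}); the resulting constants have the same $|\omega_l|^{-1}$ dependence in both approaches.
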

	\begin{proof}
		For any $i\in \{1,\ldots m\}$ and any $l\in \{1,\ldots, s\}$, we use Poincar\'e-Wirtinger inequality to estimate
		\begin{equation*}
			\intO\frac{|\na u_i|^2}{u_i}dx = 4\|\na U_i\|_{L^2(\Omega)}^2 \gtrsim \|U_i - \avO{U_i}\|_{L^2(\Omega)}^2 \ge \|U_i - \avO{U_i}\|_{L^2(\omega_l)}^2
		\end{equation*}
		and
		\begin{equation*}
			\intO\frac{|\na u_i|^2}{u_i}dx = 4\|\na U_i\|_{L^2(\Omega)}^2 \gtrsim \|\na U_i\|_{L^2(\omega_l)}^2 \gtrsim \|U_i - \avo{U_i}{l}\|_{L^2(\omega_l)}^2.
		\end{equation*}
		Therefore,
		\begin{equation*}
			\intO\frac{|\na u_i|^2}{u_i}dx \gtrsim \int_{\omega_l}\bra{\abs{U_i - \avO{U_i}}^2 + \abs{U_i - \avo{U_i}{l}}^2}dx \gtrsim |\omega_l|\abs{\avO{U_i} - \avo{U_i}{l}}^2
		\end{equation*}
		which concludes the proof of Lemma \ref{lem4}.
	\end{proof}
	\begin{lemma}\label{lem5}
		It holds that
		\begin{equation*}
			\D(u) \gtrsim \sum_{r=1}^{R}\bra{\avO{\frac{U}{\Uinf}}^{y_r} - \avO{\frac{U}{\Uinf}}^{y_r'}}^2.
		\end{equation*}
	\end{lemma}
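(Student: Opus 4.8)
The plan is to derive the estimate by simply \emph{chaining} Lemmas \ref{lem2}, \ref{lem3} and \ref{lem4} with the elementary bound \eqref{h1}; the only genuine point is a careful distribution of the diffusion term so that the errors produced when passing from partial to global averages can be absorbed. Abbreviate the total diffusive dissipation by $A := \sum_{i=1}^m\intO\frac{|\na u_i|^2}{u_i}\,dx\ge 0$ and, for each $l\in\{1,\ldots,s\}$, the reaction contribution of the $l$-th strongly connected component by $B_l := \sum_{j=L_{l-1}+1}^{L_l}\int_{\omega_l}\uinf^{y_j}\Psi\bra{\frac{u^{y_j}}{\uinf^{y_j}};\frac{u^{y_j'}}{\uinf^{y_j'}}}\,dx\ge 0$, so that \eqref{h1} reads $\D(u)\gtrsim A+\sum_{l=1}^s B_l$. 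First I would fix a parameter $\theta\in(0,1)$, to be pinned down only at the very end, and split $A=(1-\theta)A+\frac{\theta}{s}\sum_{l=1}^s A$, keeping the summand $(1-\theta)A$ in reserve:
\begin{equation*}
\D(u)\gtrsim(1-\theta)A+\sum_{l=1}^s\bra{\tfrac{\theta}{s}A+B_l}.
\end{equation*}

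Next, for each fixed $l$, since $B_l\ge 0$ and $\theta/s<1$ one has $\tfrac{\theta}{s}A+B_l\ge\tfrac{\theta}{s}(A+B_l)$, and applying Lemma \ref{lem2} to $A+B_l$, then Lemma \ref{lem3}, gives
\begin{align*}
\tfrac{\theta}{s}A+B_l&\gtrsim\frac{\theta}{s}\sum_{j=L_{l-1}+1}^{L_l}\bra{\avo{\frac{U}{\Uinf}}{l}^{y_j}-\avo{\frac{U}{\Uinf}}{l}^{y_j'}}^2\\
&\gtrsim\frac{\theta}{s}\bra{\sum_{j=L_{l-1}+1}^{L_l}\bra{\avO{\frac{U}{\Uinf}}^{y_j}-\avO{\frac{U}{\Uinf}}^{y_j'}}^2-\sum_{i=1}^m\abs{\avO{U_i}-\avo{U_i}{l}}^2}.
\end{align*}
Summing over $l$ and using that the blocks $\{L_{l-1}+1,\ldots,L_l\}$, $l=1,\ldots,s$, partition $\{1,\ldots,R\}$ (so that $\sum_{l=1}^s\sum_{j=L_{l-1}+1}^{L_l}=\sum_{r=1}^R$), I obtain constants $\kappa_1,\kappa_2,\kappa_3>0$ coming from \eqref{h1} and Lemmas \ref{lem2}, \ref{lem3} — and crucially \emph{independent of $\theta$} — with
\begin{align*}
\D(u)&\ge\kappa_1(1-\theta)A+\frac{\theta}{s}\kappa_2\sum_{r=1}^R\bra{\avO{\frac{U}{\Uinf}}^{y_r}-\avO{\frac{U}{\Uinf}}^{y_r'}}^2\\
&\quad-\frac{\theta}{s}\kappa_3\sum_{l=1}^s\sum_{i=1}^m\abs{\avO{U_i}-\avo{U_i}{l}}^2.
\end{align*}

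To close the argument I would invoke Lemma \ref{lem4}, which furnishes $\kappa_4>0$ with $A\ge\kappa_4\sum_{l=1}^s\sum_{i=1}^m\abs{\avO{U_i}-\avo{U_i}{l}}^2$, and then \emph{choose} $\theta\in(0,1)$ so small that $\kappa_1(1-\theta)\kappa_4\ge\frac{\theta}{s}\kappa_3$ — a condition on the fixed quantities $\kappa_1,\kappa_3,\kappa_4,s$ only. For such $\theta$, the last term is absorbed into $\kappa_1(1-\theta)A$, and discarding the remaining nonnegative multiple of $A$ leaves exactly $\D(u)\gtrsim\sum_{r=1}^R\bra{\avO{\frac{U}{\Uinf}}^{y_r}-\avO{\frac{U}{\Uinf}}^{y_r'}}^2$, which is the assertion of Lemma \ref{lem5}. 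I expect the only real obstacle to be this constant bookkeeping: Lemma \ref{lem2} consumes the \emph{entire} diffusion integral, so a fixed, solution-independent fraction of it must be set aside \emph{before} applying Lemma \ref{lem2}, precisely so that Lemma \ref{lem4} can later swallow the errors $\sum_{l,i}\abs{\avO{U_i}-\avo{U_i}{l}}^2$ created when Lemma \ref{lem3} trades the $\omega_l$-averages for $\Omega$-averages; the splitting parameter $\theta$ is the device that makes this work, and the rest is mechanical.
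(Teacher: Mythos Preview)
Your proof is correct and follows essentially the same route as the paper's: both split off a $\theta$-fraction of $\D(u)$, push it through \eqref{h1} and Lemmas \ref{lem2}--\ref{lem3} to produce the desired $\Omega$-average terms minus the error $\sum_{l,i}\abs{\avO{U_i}-\avo{U_i}{l}}^2$, then use Lemma \ref{lem4} on the remaining $(1-\theta)$-fraction to absorb that error for $\theta$ small. Your version is simply more explicit about the constants and the $1/s$ splitting among the connected components, which the paper leaves implicit in the $\gtrsim$ notation.
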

	\begin{proof}
		Let $\theta\in (0,1)$ to be chosen later. It follows from \eqref{h1}, Lemmas \ref{lem2} and \ref{lem3} that
		\begin{align*}
			\theta \D(u) \gtrsim \theta \sum_{r=1}^{R}\bra{\avO{\frac{U}{\Uinf}}^{y_r} - \avO{\frac{U}{\Uinf}}^{y_r'}}^2 - \theta\sum_{l=1}^{s}\sum_{i=1}^{m}\abs{\avO{U_i} - \avo{U_i}{l}}^2.
		\end{align*}
		On the other hand, by Lemma \ref{lem4},
		\begin{equation*}
			(1-\theta)\D(u) \gtrsim (1-\theta)\sum_{l=1}^{s}\sum_{i=1}^{m}\abs{\avO{U_i} - \avo{U_i}{l}}^2.
		\end{equation*}
		Thus, by choosing $\theta$ small enough we get the desired estimate in Lemma \ref{lem5}.
	\end{proof}
	
	\medskip
	\noindent{\bf Proof of Proposition \ref{pro:funtional_inequality}.}
	Proposition \ref{pro:funtional_inequality} now follows immediately from Lemma \ref{lem5} and \cite[Lemmas 2.7 and 2.8]{fellner2018convergence}. For the convenience of the reader, we nevertheless provide the proof. First, from \eqref{h1} and Lemma \ref{lem5}, we have
	\begin{equation}\label{h2}
		\D(u) \gtrsim \sum_{i=1}^m\int_{\Omega}\frac{|\na u_i|^2}{u_i}dx + \sum_{r=1}^{R}\bra{\avO{\frac{U}{\Uinf}}^{y_r} - \avO{\frac{U}{\Uinf}}^{y_r'}}^2.
	\end{equation}
	Thanks to the Logarithmic Sobolev inequality in bounded Lipschitz domains, see e.g. \cite{desvillettes2014exponential}, we have
	\begin{equation}\label{h3}
		\D(u) \gtrsim \sum_{i=1}^{m}\int_{\Omega}u_i\log\frac{u_i}{\avO{u_i}}dx = \E(u|\avO{u}),
	\end{equation}
	with $\E(u|\avO{u})$ appearing in \eqref{entropy_additive}. From Poincar\'e-Wirtinger's inequality 
	\begin{equation}\label{h3_1}
		\int_{\Omega}\frac{|\na u_i|^2}{u_i}dx = 4\int_{\Omega}|\na U_i|^2dx \gtrsim_{\Omega} \|U_i - \avO{U_i}\|_{\LO{2}}^2 =: \|\mu_i\|_{\LO{2}}^2 ,
	\end{equation}
	where we denote by $\mu_i(x):= U_i(x) - \avO{U_i}$ for $x\in\Omega$, $i=1,\ldots, m$. Thanks to Lemma \ref{lem1},
	\begin{equation}\label{h4}
		\|\mu_i\|_{\LO{2}} \lesssim 1.
	\end{equation}
	From $\|\mu_i\|_{\LO{2}}^2 = \avO{U_i^2} - \avO{U_i}^2$, 
	\begin{equation*}
		\frac{\avO{U_i}}{U_{i,\infty}} = \frac{1}{U_{i,\infty}}\bra{ \sqrt{ \avO{U_i^2} } - \frac{\|\mu_i\|_{\LO{2}}^2}{\sqrt{\avO{U_i^2}}+\avO{U_i}}} = \sqrt{\frac{\avO{u_i}}{u_{i,\infty}}} - \mathfrak{R}(U_i)\|\mu_i\|_{L^2(\Omega)},
	\end{equation*}
	with
	\begin{equation*}
		\mathfrak{R}(U_i):= \frac{\|\mu_i\|_{\LO{2}}}{U_{i,\infty}\bra{\sqrt{\avO{U_i^2}} + \avO{U_i}}} \ge 0
	\end{equation*}
	satisfying
	\begin{equation}\label{h5}
		\mathfrak{R}(U_i)^2 = \frac{\|\mu_i\|_{\LO{2}}^2}{U_{i,\infty}^2\bra{\sqrt{\avO{U_i^2}} + \avO{U_i}}^2} = \frac{\sqrt{\avO{U_i^2}} - \avO{U_i}}{{U_{i,\infty}^2\bra{\sqrt{\avO{U_i^2}} + \avO{U_i}}}} \le \frac{1}{U_{i,\infty}^2}.
	\end{equation}
	Therefore, we can estimate using Taylor's expansion and the bounds \eqref{h4}, \eqref{h5}
	\begin{equation}\label{h6}
		\begin{aligned}
			&\sum_{r=1}^{R}\bra{\avO{\frac{U}{\Uinf}}^{y_r} - \avO{\frac{U}{\Uinf}}^{y_r'}}^2\\
			&= \sum_{r=1}^{R}\left[\prod_{i=1}^m\bra{\sqrt{\frac{\avO{u_i}}{u_{i,\infty}}} - \mathfrak{R}(U_i)\|\mu_i\|_{L^2(\Omega)}}^{y_{r,i}} - \prod_{i=1}^m\bra{\sqrt{\frac{\avO{u_i}}{u_{i,\infty}}} - \mathfrak{R}(U_i)\|\mu_i\|_{L^2(\Omega)}}^{y_{r,i}'} \right]^2\\
			&\gtrsim_{u_\infty} \frac{1}{2}\sum_{r=1}^{R}\left[\sqrt{\frac{\avO{u_i}}{u_{i,\infty}}}^{y_r} - \sqrt{\frac{\avO{u_i}}{u_{i,\infty}}}^{y_r'} \right]^2 - \sum_{i=1}^m\|\mu_i\|_{\LO{2}}^2.
		\end{aligned}
	\end{equation}
	Now, it follows from \eqref{h2}, \eqref{h3_1} and \eqref{h6} that for any $\theta\in (0,1)$
	\begin{equation}
		\begin{aligned}
			\D(u) &\gtrsim \sum_{i=1}^m\int_{\Omega}\frac{|\na u_i|^2}{u_i}dx + \theta \sum_{r=1}^{R}\bra{\avO{\frac{U}{\Uinf}}^{y_r} - \avO{\frac{U}{\Uinf}}^{y_r'}}^2\\
			&\gtrsim \sum_{i=1}^m\|\mu_i\|_{\LO{2}}^2 + \frac{\theta}{2}\sum_{r=1}^{R}\left[\sqrt{\frac{\avO{u_i}}{u_{i,\infty}}}^{y_r} - \sqrt{\frac{\avO{u_i}}{u_{i,\infty}}}^{y_r'} \right]^2 - \theta \sum_{i=1}^m\|\mu_i\|_{\LO{2}}^2\\
			&\gtrsim_{\theta}\sum_{r=1}^{R}\left[\sqrt{\frac{\avO{u_i}}{u_{i,\infty}}}^{y_r} - \sqrt{\frac{\avO{u_i}}{u_{i,\infty}}}^{y_r'} \right]^2,
		\end{aligned}
		\label{h7}
	\end{equation}
	by choosing $\theta$ small enough. Applying then \cite[Inequality (11)]{fellner2018convergence}, we have the following finite dimensional inequality
	\begin{equation}\label{h8}
		\sum_{r=1}^{R}\left[\sqrt{\frac{\avO{u_i}}{u_{i,\infty}}}^{y_r} - \sqrt{\frac{\avO{u_i}}{u_{i,\infty}}}^{y_r'} \right]^2 \gtrsim \sum_{i=1}^m\bra{\sqrt{\frac{\avO{u_i}}{u_{i,\infty}}} - 1}^2.
	\end{equation}
	On the other hand, using the elementary inequality $z\log z - z + 1 \le (z - 1)^2$, we estimate
	\begin{equation}\label{h9}
		\begin{aligned}
			\E(\avO{u}|u_\infty) &= \sum_{i=1}^mu_{i,\infty}\bra{ \frac{\avO{u_i}}{u_{i,\infty}} \log \frac{\avO{u_i}}{u_{i,\infty}} - \frac{\avO{u_i}}{u_{i,\infty}} + 1}\\
			&\le \sum_{i=1}^mu_{i,\infty}\bra{\frac{\avO{u_i}}{u_{i,\infty}}-1}^2\\
			&= \sum_{i=1}^mu_{i,\infty}\bra{\sqrt{\frac{\avO{u_i}}{u_{i,\infty}}}+1}^2\bra{\sqrt{\frac{\avO{u_i}}{u_{i,\infty}}}-1}^2\\
			&\lesssim \sum_{i=1}^m\bra{\sqrt{\frac{\avO{u_i}}{u_{i,\infty}}}-1}^2.
		\end{aligned}
	\end{equation}
	Combining \eqref{h7}, \eqref{h8} and \eqref{h9} yields
	\begin{equation*}
		\D(u) \gtrsim \E(\avO{u}|u_\infty).
	\end{equation*}
	From this, \eqref{h3} and \eqref{entropy_additive}, we get the proof of Proposition \ref{pro:funtional_inequality}.
	
	\hfill $\square$
	
	\medskip
	We are now ready to prove the first main result.
	
	\noindent{\bf Proof of Theorem \ref{thm:main1}.}
	Thanks to Lemma \ref{lem0}, any renormalised solution satisfies for $0\le \tau<T$
	\begin{equation}\label{e1}
		\E(u(s)|\uinf)\bigg|_{s=\tau}^{s=T} + \int_{\tau}^{T} \D(u(s))ds \le 0.
	\end{equation}
	%	This implies in particular that
	%	\begin{equation*}
		%		\E(u(t)|\uinf) \le \E(u_0|\uinf) \quad \forall t\ge 0.
		%	\end{equation*}
	%	It follows that 
	%	\begin{equation*}
		%		\sum_{i=1}^m\intO u_i(x,t)dx \log\bra{\intO u_i(x,t)dx} \lesssim 1 \quad \forall t\ge 0.
		%	\end{equation*}
	Moreover, still thanks to Lemma \ref{lem0}, all renormalised solutions satisfy the conservation laws \eqref{conservation_laws}. Note that \eqref{e1} also implies 
	\begin{equation*} 
		\sum_{i=1}^m\int_{\Omega}u_i(x,t)\log u_i(x,t)dx \le L
	\end{equation*}
	for some $L$ depending on $\E(u_0|u_\infty)$ and $u_\infty$. Therefore, we can apply Proposition \ref{pro:funtional_inequality} to get
	\begin{equation*}
		\D(u(s)) \ge \lambda \E(u(s)|\uinf),\quad \forall s\ge 0.
	\end{equation*}
	Inserting this into \eqref{e1} gives for all $0\le \tau < T$
	\begin{equation*}
		\E(u(s)|\uinf)\bigg|_{s=\tau}^{s=T} + \lambda\int_{\tau}^{T} \E(u(s)|\uinf)ds \le 0.
	\end{equation*}
	Defining
	\begin{equation*}
		\varphi(\tau) = \int_{\tau}^{T}\E(u(s)|\uinf)ds ,
	\end{equation*}		
	we see that
	\begin{equation*}
		\varphi'(\tau) = -\E(u(\tau)|\uinf) \le -\E(u(T)|\uinf) -\lambda\int_{\tau}^{T}\E(u(s)|\uinf)ds = -\E(u(T)|\uinf) -\lambda \varphi(\tau).
	\end{equation*}
	Gronwall's lemma implies then that
	\begin{equation*}
		e^{\lambda T}\varphi(T) + \E(u(T)|\uinf)\frac{e^{\lambda T}-e^{\lambda\tau}}{\lambda} \le e^{\lambda \tau}\varphi(\tau).
	\end{equation*}
	Using $\varphi(T)=0$ and $\lambda\varphi(\tau)\le \E(u(\tau)|\uinf) - \E(u(T)|\uinf)$ leads to
	\begin{equation*}
		\E(u(T)|\uinf)(e^{\lambda T} - e^{\lambda\tau}) \le e^{\lambda \tau}(\E(u(\tau)|\uinf) - \E(u(T)|\uinf),
	\end{equation*}
	and thus to
	\begin{equation*}
		\E(u(T)|\uinf) \le e^{-\lambda(T-\tau)}\E(u(\tau)|\uinf) \quad \forall 0\le \tau < T.
	\end{equation*}
	Setting $\tau = 0$ entails the global exponential decay of the relative entropy, which finally yields the decay of the solution towards equilibrium, thanks to Lemma \ref{lem:CSK}.
	\hfill $\square$
	%%%%%%%%%%%%%%%%%%%%%%%%%%%%%%%%%%%%%%%%%%%%%%%%
	
	\section{Reactions in measurable sets - Proof of Theorem \ref{thm:measurable}}\label{sec3}
	
	\subsection{Preliminary estimates}
	%We start with the global existence and boundedness of solutions to \eqref{special_sys}. Then we gather some useful estimates for later analysis with an emphasis that all these estimates are true under either assumption (i), (ii) or (iii) in Theorem \ref{thm:main3}. %(\blue{The appendix just for the specific system that we are consider. The general case is left for future research!})
	\begin{proposition}\label{pro1}
		Assume the assumptions in Theorem \ref{thm:measurable}. Then, for any bounded and non-negative initial data $u_0 \in L^{\infty}_+(\Omega)^3$, there exists a unique global non-negative weak solution to \eqref{special_sys}, which is bounded uniformly in time, i.e.
		\begin{equation}\label{Linf-bound}
			\sup_{t\ge 0}\sup_{i=1,2,3}\|u_i(t)\|_{L^{\infty}(\Omega)} < +\infty.
		\end{equation}
	\end{proposition}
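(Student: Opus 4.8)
The statement combines global existence with a uniform‑in‑time $L^\infty$ bound for the quadratic system \eqref{special_sys} in the regime where all three diffusivities are uniformly elliptic ($d_1,d_2\ge\delta$ and $d_3\ge\delta_0>0$) and $k_1,k_2$ are bounded. The plan is to follow the by‑now classical programme for quadratic reaction--diffusion systems with controlled mass. \emph{Local solutions and positivity.} Since $k_1,k_2\in L^\infty(\Omega\times\R_+)$ and the right‑hand sides $R_i$ are locally Lipschitz, standard local parabolic theory for the divergence‑form operators $-\div(d_i\na\cdot)$ with Neumann conditions produces, for every $u_0\in L^\infty_+(\Omega)^3$, a unique maximal bounded weak solution on $[0,T_{\max})$ with $u_i\in C([0,T_{\max});L^\infty(\Omega))$, and either $T_{\max}=+\infty$ or $\limsup_{t\uparrow T_{\max}}\max_i\|u_i(t)\|_{L^\infty(\Omega)}=+\infty$. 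The reaction is quasi‑positive — each $R_i\ge 0$ whenever $u_i=0$ — so the solution stays in $\R_+^3$. Thus it is enough to establish an a priori $L^\infty$ bound on each $[0,T]$, and we shall in fact obtain one uniform on $\R_+$. \emph{Mass control.} Combining the first three equations with the weights $(4,2,1)$ and integrating yields the conservation law \eqref{conservation_special}, so $\sup_{t\ge0}\sum_{i=1}^3\|u_i(t)\|_{\LO{1}}\le M$; this is essentially the only coercive a priori estimate, since the dissipative terms $-2k_1u_2^2$ and $-2k_2u_3^2$ act only on $\omega_1$ resp.\ $\omega_2$, not on all of $\Omega$.

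\emph{Propagating $L^1$ to $L^p$.} Using uniform ellipticity of the $d_i$ together with the (at most) quadratic growth of the $R_i$, I would upgrade the mass bound to $\sup_{t\ge0}\|u_i(t)\|_{L^p(\Omega)}<\infty$ for every $p<\infty$. Concretely: first a duality estimate — written entirely in divergence form so that only the ellipticity constants $\delta,\delta_0>0$ (and not any regularity of $x\mapsto d_i(x,t)$) enter — gives $u_i\in L^2(\Omega\times(0,T))$ with a bound depending only on $M,\delta,\delta_0,\|u_0\|_\infty$ and $T$; then a bootstrap through an $L^p$‑energy functional, i.e.\ controlling $\tfrac{d}{dt}\sum_i\theta_i\int_\Omega u_i^{p}\,dx$ for a suitably ordered family of weights $\theta_i>0$ and absorbing the quadratic reaction contributions into the dissipation $\delta\int_\Omega|\na u_i^{p/2}|^2\,dx$ by means of Gagliardo--Nirenberg interpolation and the mass bound. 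Together with the entropy decay (the system possesses the Lyapunov functional $\sum_i\int_\Omega(u_i\log u_i-u_i+1)\,dx$, whose dissipation controls $\int_0^\infty\!\!\int_\Omega|\na\sqrt{u_i}|^2$), this yields the claimed time‑uniform $L^p$ bounds; see \cite{pierre2010global} and the related estimates in \cite{desvillettes2007global}.

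\emph{From $L^p$ to $L^\infty$ and conclusion.} Fixing $p>n+2$, one has $R_i\in L^\infty(\R_+;L^{p/2}(\Omega))$ with $p/2>(n+2)/2$; applying parabolic regularity for divergence‑form operators with bounded measurable, uniformly elliptic coefficients (maximal regularity plus Sobolev embedding, or the De Giorgi--Nash--Moser / Aronson--Serrin estimates) on the sliding windows $[\max(0,t-1),t]$, with the uniform $L^{p}$ bound on the solution at the left endpoint and a finite bootstrap if needed, gives $\sup_{t\ge0}\max_i\|u_i(t)\|_{L^\infty(\Omega)}=:\mathcal{C}_0<+\infty$. By the blow‑up alternative this forces $T_{\max}=+\infty$ and proves \eqref{Linf-bound}. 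For uniqueness on $\R_+$, subtracting the equations for two bounded solutions $u,\tilde u$ with the same data and testing with $u-\tilde u$ gives, using that $R$ is Lipschitz on $[0,\mathcal{C}_0]^3$ and $d_i\ge\delta,\delta_0$,
\[
\tfrac{d}{dt}\sum_{i=1}^3\|u_i-\tilde u_i\|_{\LO{2}}^2 \;\le\; C(\mathcal{C}_0,\|k_1\|_\infty,\|k_2\|_\infty,\delta,\delta_0)\,\sum_{i=1}^3\|u_i-\tilde u_i\|_{\LO{2}}^2,
\]
whence $u\equiv\tilde u$ by Grönwall.

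\emph{Main obstacle.} The genuinely hard part is propagating $L^1$ to $L^p$ for all $p$ \emph{uniformly in time}: the nonlinearity is quadratic, which is exactly the borderline growth at which a one‑shot Sobolev bootstrap from the $L^2$ duality estimate fails in dimensions $n\ge 3$, so one must interleave the duality/$L^p$‑energy machinery with the entropy dissipation and an iteration. Moreover, because $x\mapsto d_i(x,t)$ is only bounded and measurable, every such estimate has to be carried out in divergence form so that only uniform ellipticity — and never regularity — of the diffusion coefficients is used.
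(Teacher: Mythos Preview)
The paper's proof is three lines: verify that the nonlinearities are locally Lipschitz in $u$, quasi-positive, and satisfy $4f_1+2f_2+f_3=0$, then invoke \cite[Theorem~1.1]{fitzgibbon2021reaction}. Your outline amounts to a hand-rolled version of what such a black-box result contains, so the spirit is right, but the soft spot is the uniform-in-time $L^p$ step: the duality estimate you describe produces an $L^2(\Omega\times(0,T))$ bound whose constant grows with $T$, and your appeal to entropy dissipation to repair this (``together with the entropy decay \ldots\ this yields the claimed time-uniform $L^p$ bounds'') is asserted rather than argued. You correctly flag this as the main obstacle, but you do not actually overcome it.

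For this particular system the obstacle is illusory. The functional
\[
\mathcal{H}_p[u]=\int_\Omega\Bigl(\tfrac{4}{p+1}u_1^{\,p+1}+\tfrac{2}{2p+1}u_2^{\,2p+1}+\tfrac{1}{4p+1}u_3^{\,4p+1}\Bigr)dx
\]
is a Lyapunov functional for every $p\in\N$: the diffusion contributes non-positively after integration by parts, and the reaction contribution is $-4\int_\Omega k_1(u_2^2-u_1)(u_2^{2p}-u_1^p)\,dx-2\int_\Omega k_2(u_3^2-u_2)(u_3^{4p}-u_2^{2p})\,dx\le 0$. Hence $\mathcal{H}_p[u(t)]\le\mathcal{H}_p[u_0]$ for all $t\ge 0$, and taking $(p+1)$-th roots and sending $p\to\infty$ gives \eqref{Linf-bound} directly --- no duality, no bootstrap, no sliding windows. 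The paper itself uses exactly this device in the proof of Proposition~\ref{pro2}, where the cited theorem is unavailable because $d_3$ degenerates.
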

	
	% \textcolor{red}{we did not prove yet uniqueness, I don't know if we should keep this uniqueness result in our statement(s). L}
	
	%		\begin{definition}\label{def:weak_solution}
		%		A vector of concentration $u = (u_i)_{i=1,2,3}:\Omega\times \R_+\to \R_+^3$ is called a global weak solution to \eqref{special_sys} if for any $T>0$, it holds
		%		\begin{equation*}
			%			\intO u_i(\cdot,s)\varphi(\cdot,s)dx\bigg|_{s=0}^{s=T} - \int_0^T\intO u_i\partial_t\varphi dxdt + \int_0^T\intO d_i(x,t)\na u_i\cdot \na \varphi dxdt = \int_0^T\intO f_i(u)\varphi dxdt
			%		\end{equation*}
		%		for all test functions $\varphi\in L^2(0,T;H^1(\Omega))\cap H^1(0,T;L^2(\Omega))$, where $f_i(u)\in L^1(\Omega\times(0,T))$ is the nonlinearity of the equation for $u_i$. 
		%	\end{definition}
	
	%	\begin{proposition}\label{pro_chain}
		%		For any non-negative, bounded initial data $u_0 \in L_+^{\infty}(\Omega)^m$, there exists a unique global non-negative, bounded weak solution to \eqref{chain_sys} as in Definition \ref{def:weak_solution}. The solution is bounded uniformly in time, i.e.
		%		\begin{equation*}
			%			\sup_{t\ge 0}\sum_{i=1}^{m}\|u_i(t)\|_{L^\infty(\Omega)} < + \infty.
			%		\end{equation*}
		%	\end{proposition}
	\begin{proof}
		Denote by $f_j(x,t,u)$ the nonlinearity in the equation for $u_j$ in \eqref{special_sys}.
		It is easy to check that these nonlinearities are locally Lipschitz continuous in $u$, uniformly in $(x,t)$, quasi-positive, i.e.
		\begin{equation*} 
			f_j(x,t,u)\ge 0 \quad \text{ for all } (x,t,u)\in \Omega\times\mathbb R_+\times\mathbb R_+^3 \text{ with } u_j = 0,
		\end{equation*}
		and satisfy the following (weighted) conservation of mass condition
		\begin{equation*} 
			4f_1(x,t,u) + 2f_2(x,t,u) + f_3(x,t,u) = 0, \quad \forall (x,t,u)\in \Omega\times\mathbb R_+\times\mathbb R_+^3.
		\end{equation*}
		Therefore, we can apply \cite[Theorem 1.1]{fitzgibbon2021reaction} to obtain the global existence of a unique weak solution, together with the uniform-in-time boundedness in $\LO{\infty}$-norm.
	\end{proof}
	
	In the following, we prove certain estimates linked to the entropy and entropy dissipation of \eqref{special_sys}.
	
	\medskip
	Thanks to the $\LO{\infty}$ bound \eqref{Linf-bound} in Proposition \ref{pro1}, we denote
	% by $C_{0}$ as
	\begin{equation}\label{C0}
		C_{0} :=\sup_{t\ge 0}(\|u_1(t)\|_{\LO{\infty}} + \|u_2(t)\|_{\LO{\infty}} + \|u_3(t)\|_{\LO{\infty}}).
	\end{equation}
	As in the previous section, we consider the relative entropy
	\begin{equation}\label{entropy_special}
		\mathcal{E}(u|u_\infty) =\sum_{j=1}^{3}\int_{\Omega}  \bigg( u_{j}\left(
		\cdot,t\right)  \ln\frac{u_{j}\left(  \cdot,t\right)  }{u_{j,\infty}%
		}-u_{j}\left(  \cdot,t\right)  +u_{j,\infty}  \bigg) \, dx,
	\end{equation}
	where the equilibrium $u_{\infty}$ is defined as in \eqref{equi_sys_special}, and the corresponding entropy dissipation
	\begin{equation}\label{entropy_dissipation_special}
		\mathcal{D}(u) := \sum_{j=1}^3\int_{\Omega}d_{j}\frac{\left\vert \nabla
			u_{j}\right\vert ^{2}}{u_{j}}dx+\int_{\Omega}k_{1}\left(  u_{2}^{2}%
		-u_{1}\right)  \ln\frac{u_{2}^{2}}{u_{1}}dx+\int_{\Omega}k_{2}\left(
		u_{3}^{2}-u_{2}\right)  \ln\frac{u_{3}^{2}}{u_{2}}dx .
	\end{equation}
	
	\begin{lemma}\label{lem6}
		It holds
		\begin{equation*}
			\sum_{j=1}^3\|u_j-u_{j,\infty}\|_{L^1(\Omega)}^2 \lesssim_{u_\infty,C_0} \E(u|u_\infty) \lesssim_{u_\infty} \sum_{j=1}^3\int_{\Omega}|u_j - u_{j,\infty}|^2dx.
		\end{equation*}
	\end{lemma}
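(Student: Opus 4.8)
The plan is to prove both inequalities by reducing them to \emph{pointwise} estimates on the relative entropy integrand and then integrating over $\Omega$. Write $\Phi_j(a) := a\log(a/u_{j,\infty}) - a + u_{j,\infty}$ for $a\ge 0$, so that $\E(u|\uinf) = \sum_{j=1}^3\intO \Phi_j(u_j)\,dx$. Each $\Phi_j$ is smooth and strictly convex on $(0,\infty)$ (indeed $\Phi_j''(a) = 1/a$), attains its unique global minimum $\Phi_j(u_{j,\infty}) = 0$ at $a=u_{j,\infty}$, and is strictly positive elsewhere (in particular $\Phi_j(0) = u_{j,\infty} > 0$). The two inequalities then correspond to a quadratic upper bound and a quadratic lower bound for $\Phi_j$, which behave differently at infinity; this is why they carry different constant dependencies.

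For the upper bound (the right inequality) I would invoke the elementary estimate $z\log z - z + 1 \le (z-1)^2$, valid for all $z\ge 0$, which is already used in \eqref{h9}. Substituting $z = u_j/u_{j,\infty}$ and multiplying by $u_{j,\infty}$ gives the pointwise bound $\Phi_j(u_j) \le (u_j - u_{j,\infty})^2/u_{j,\infty}$. Integrating over $\Omega$ and summing over $j$ yields $\E(u|\uinf)\le \sum_{j=1}^3 u_{j,\infty}^{-1}\|u_j - u_{j,\infty}\|_{\LO{2}}^2 \lesssim_{\uinf}\sum_{j=1}^3\|u_j - u_{j,\infty}\|_{\LO{2}}^2$, the constant depending only on $\min_j u_{j,\infty}$ and hence only on $\uinf$. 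Note that no uniform-in-time bound on the solution is needed here, which is consistent with the stated dependence $\lesssim_{\uinf}$.

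For the lower bound (the left inequality) I would exploit the $\LO{\infty}$-bound \eqref{C0}, which confines each $u_j$ to the compact range $[0,C_0]$. Consider the ratio $g_j(a):=\Phi_j(a)/(a-u_{j,\infty})^2$ for $a\in[0,C_0]$, $a\ne u_{j,\infty}$. By Taylor expansion around $a=u_{j,\infty}$ the singularity is removable, with $g_j(u_{j,\infty}) = \Phi_j''(u_{j,\infty})/2 = 1/(2u_{j,\infty})$, so $g_j$ extends to a continuous and strictly positive function on the compact set $[0,C_0]$; it therefore attains a positive minimum $c_j = c_j(\uinf,C_0) > 0$. This gives $\Phi_j(u_j)\ge c_j\,(u_j - u_{j,\infty})^2$ pointwise a.e. Integrating, summing, and using $\|f\|_{\LO{1}}^2 \le \|f\|_{\LO{2}}^2$ (valid since $|\Omega|=1$, by Cauchy--Schwarz) yields $\E(u|\uinf)\ge (\min_j c_j)\sum_{j=1}^3\|u_j - u_{j,\infty}\|_{\LO{1}}^2$, which is the claimed lower bound with constant depending on $\uinf$ and $C_0$.

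The only delicate point is this lower bound. The quadratic lower bound for $\Phi_j$ \emph{fails} as $a\to\infty$, where $\Phi_j(a)\sim a\log a$ is sub-quadratic and hence $g_j(a)\to 0$; this is exactly why the uniform $\LO{\infty}$-bound \eqref{C0} is indispensable and why the constant must depend on $C_0$. In contrast to Lemma \ref{lem:CSK}, where only an $L^1$-bound was available and one had to split through the spatial average using entropy additivity and a Csisz\'ar--Kullback--Pinsker inequality, the uniform boundedness here allows the entirely pointwise argument above. Everything beyond these two one-variable convexity facts is routine integration.
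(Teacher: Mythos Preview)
Your proof is correct. For the upper bound you use exactly the same elementary inequality as the paper. For the lower bound, however, the paper simply invokes Lemma~\ref{lem:CSK} (the Csisz\'ar--Kullback--Pinsker-type inequality proved via the additivity $\E(u|\uinf)=\E(u|[u]_\Omega)+\E([u]_\Omega|\uinf)$), noting that the $L^1$-bound $M_0$ there is implied by the $L^\infty$-bound $C_0$ here. Your argument takes a different and more self-contained route: you exploit the $L^\infty$-bound directly to obtain a \emph{pointwise} quadratic lower bound $\Phi_j(u_j)\ge c_j(u_j-u_{j,\infty})^2$ via the compactness of $[0,C_0]$, which in fact yields the stronger intermediate estimate $\E(u|\uinf)\gtrsim\sum_j\|u_j-u_{j,\infty}\|_{L^2(\Omega)}^2$ before passing to $L^1$ by Cauchy--Schwarz. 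The trade-off is clear: the paper's approach is more general (it needs only $L^1$-control and reuses an already-established lemma), while yours is entirely elementary and avoids the CKP machinery, at the cost of genuinely requiring the uniform $L^\infty$-bound---which, as you correctly observe, is precisely what is available and what the stated dependence on $C_0$ reflects.
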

	\begin{proof}
		The first estimate is a special case of the Csisz\'ar-Kullback-Pinsker inequality in Lemma \ref{lem:CSK},
		and the second one follows directly form the elementary inequality
		\begin{equation*}
			x\log\frac{x}{y} - x + y \le \frac{1}{y}|x - y|^2.
		\end{equation*}
	\end{proof}
	
	\begin{lemma}\label{lem7}
		For solutions to \eqref{special_sys}, it holds, for $j=1,2,3$,
		\begin{equation}\label{e2}
			\norm{\sqrt{u_j} - \avO{\sqrt{u_j}}}_{L^2(\Omega)}^2 \lesssim_{\Omega}\intO \frac{|\na u_j|^2}{u_j}dx,
		\end{equation}
		and
		\begin{equation}\label{e3}
			\norm{u_j - \avO{u_j}}_{L^2(\Omega)}^2 \lesssim_{\Omega,C_0} \intO \frac{|\na u_j|^2}{u_j}dx,
		\end{equation}
		with $C_0$ defined in \eqref{C0}.
	\end{lemma}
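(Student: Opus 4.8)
The two inequalities are both consequences of the Poincaré--Wirtinger inequality \eqref{P} applied on the full domain $\Omega$ (which has Lipschitz boundary, hence satisfies \eqref{P}), combined with the identity $\na U_j = \frac{\na u_j}{2\sqrt{u_j}}$ where $U_j = \sqrt{u_j}$. For \eqref{e2}, I would simply write
\[
\intO \frac{|\na u_j|^2}{u_j}\,dx = 4\intO |\na U_j|^2\,dx \gtrsim_{\Omega} \norm{U_j - \avO{U_j}}_{L^2(\Omega)}^2,
\]
which is exactly \eqref{e2}; no boundedness of the solution is needed here, only that $\sqrt{u_j}\in H^1(\Omega)$ (which holds for the weak solution constructed in Proposition \ref{pro1}), so that \eqref{P} is applicable.

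For \eqref{e3} the point is to pass from control of $U_j - \avO{U_j}$ in $L^2$ to control of $u_j - \avO{u_j} = U_j^2 - \avO{U_j^2}$ in $L^2$, and this is where the uniform bound $C_0$ from \eqref{C0} enters. I would use the factorisation
\[
u_j(x) - \avO{u_j} = U_j(x)^2 - \avO{U_j^2},
\]
together with $\avO{U_j^2} \ge \avO{U_j}^2$ (Jensen), to bound pointwise
\[
\bigl| U_j(x)^2 - \avO{U_j}^2 \bigr| = \bigl|U_j(x) - \avO{U_j}\bigr|\,\bigl(U_j(x) + \avO{U_j}\bigr) \le 2\sqrt{C_0}\,\bigl|U_j(x) - \avO{U_j}\bigr|,
\]
since $U_j(x) \le \sqrt{C_0}$ and $\avO{U_j}\le \sqrt{\avO{u_j}}\le\sqrt{C_0}$. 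Then I would split $u_j - \avO{u_j} = (U_j^2 - \avO{U_j}^2) + (\avO{U_j}^2 - \avO{U_j^2})$, noting that the second (constant) term has $L^2(\Omega)$-norm equal to $\avO{U_j^2} - \avO{U_j}^2 = \norm{U_j - \avO{U_j}}_{L^2(\Omega)}^2$, which by \eqref{h4}-type bounds is itself $\lesssim_{C_0} \norm{U_j - \avO{U_j}}_{L^2(\Omega)}$; squaring and combining with the pointwise estimate gives
\[
\norm{u_j - \avO{u_j}}_{L^2(\Omega)}^2 \lesssim_{C_0} \norm{U_j - \avO{U_j}}_{L^2(\Omega)}^2 \lesssim_{\Omega} \intO \frac{|\na u_j|^2}{u_j}\,dx,
\]
the last step being \eqref{e2}. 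This yields \eqref{e3}.

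There is no serious obstacle here: the only subtlety is to track that all auxiliary quantities ($U_j$ itself, its average, the variance $\avO{U_j^2} - \avO{U_j}^2$) are bounded by a constant depending only on $C_0$ and $\Omega$, so that the passage from the square-root variable to the original variable costs only a multiplicative constant. An equally short alternative for \eqref{e3} would be to invoke the Poincaré--Wirtinger inequality \eqref{P} directly on $u_j \in H^1(\Omega)$ and then bound $\intO |\na u_j|^2\,dx \le \norm{u_j}_{L^\infty(\Omega)}\intO \frac{|\na u_j|^2}{u_j}\,dx \le C_0 \intO \frac{|\na u_j|^2}{u_j}\,dx$; I would probably present this route since it is the most transparent, with the square-root argument reserved as a remark.
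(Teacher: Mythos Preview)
Your proof is correct and follows essentially the same approach as the paper: Poincar\'e--Wirtinger applied to $\sqrt{u_j}$ for \eqref{e2}, and for \eqref{e3} the $L^\infty$ bound $C_0$ together with the factorisation $u_j-\avO{u_j}=(\sqrt{u_j}-\sqrt{\avO{u_j}})(\sqrt{u_j}+\sqrt{\avO{u_j}})$ to reduce to \eqref{e2}. Your alternative route for \eqref{e3} (Poincar\'e--Wirtinger directly on $u_j$ and then $|\nabla u_j|^2\le C_0\,|\nabla u_j|^2/u_j$) is in fact cleaner than the paper's argument and avoids the auxiliary splitting altogether.
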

	\begin{proof}
		The estimate \eqref{e2} is a consequence of Poincar\'e-Wirtinger's inequality and the fact that $\frac{|\na u_j|^2}{u_j} = 4|\na \sqrt{u_j}|^2$. For \eqref{e3}, we estimate
		\begin{equation*}
			\intO|u_j - \avO{u_j}|^2dx = \intO\abs{\sqrt{u_j} + \sqrt{\avO{u_j}}}^2\abs{\sqrt{u_j} - \avO{\sqrt{u_j}}}^2dx \lesssim_{C_0}\norm{\sqrt{u_j} - \avO{\sqrt{u_j}}}_{L^2(\Omega)}^2,
		\end{equation*}
		hence \eqref{e3} follows from \eqref{e2}.
	\end{proof}
	
	\begin{lemma}\label{lem7_1}
		It holds
		\begin{equation*}
			\D(u) \gtrsim \sum_{j=1}^3\int_{\Omega}d_j\frac{|\na u_j|^2}{u_j}dx + \int_{\omega_1}\abs{u_2 - \sqrt{u_1}}^2dx + \int_{\omega_2}\abs{u_3 - \sqrt{u_2}}^2dx.
		\end{equation*}
	\end{lemma}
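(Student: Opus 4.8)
The plan is essentially to discard nothing from the diffusion part, localize the two reaction integrals to the sets $\omega_1,\omega_2$ where the rate coefficients are bounded below by $\kappa$, and then turn each chemical integrand into a perfect square.

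First I would use that for all $a,b\ge 0$ the quantity $(a-b)\log(a/b)$ is non-negative (with the usual conventions at $0$); hence, discarding the contribution of $k_1$ outside $\omega_1$ and of $k_2$ outside $\omega_2$ and invoking $k_1\ge\kappa$ on $\omega_1$, $k_2\ge\kappa$ on $\omega_2$ from \eqref{k1k2}, we obtain
\begin{equation*}
	\intO k_1\bra{u_2^2-u_1}\ln\frac{u_2^2}{u_1}\,dx \ge \kappa\int_{\omega_1}\bra{u_2^2-u_1}\ln\frac{u_2^2}{u_1}\,dx,
\end{equation*}
together with the analogous bound for the second reaction term over $\omega_2$.

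Next I would invoke the elementary inequality $(a-b)\log(a/b)\ge 2(\sqrt a-\sqrt b)^2$, which follows by writing $(a-b)\log(a/b)=\Psi(a;b)+\Psi(b;a)$ with $\Psi(x;y)=x\log(x/y)-x+y$ and applying twice the bound $\Psi(x;y)\ge(\sqrt x-\sqrt y)^2$ already used several times in this paper. Applying it with $(a,b)=(u_2^2,u_1)$ and with $(a,b)=(u_3^2,u_2)$ gives pointwise $(u_2^2-u_1)\log(u_2^2/u_1)\ge 2\abs{u_2-\sqrt{u_1}}^2$ and $(u_3^2-u_2)\log(u_3^2/u_2)\ge 2\abs{u_3-\sqrt{u_2}}^2$. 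Plugging these into the previous display and keeping the diffusion terms unchanged yields
\begin{equation*}
	\D(u)\ge \sum_{j=1}^3\intO d_j\frac{|\na u_j|^2}{u_j}\,dx + 2\kappa\int_{\omega_1}\abs{u_2-\sqrt{u_1}}^2\,dx + 2\kappa\int_{\omega_2}\abs{u_3-\sqrt{u_2}}^2\,dx,
\end{equation*}
which is the claimed inequality, with implicit constant $\min\{1,2\kappa\}$. I do not expect any genuine obstacle here: the only point requiring a word of care is the behaviour at zeros of the $u_i$, which is taken care of by the monotone-convergence conventions for $x\log x$ and $(a-b)\log(a/b)$.
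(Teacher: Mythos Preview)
Your proof is correct and follows essentially the same approach as the paper: restrict the reaction integrals to $\omega_1,\omega_2$ using \eqref{k1k2}, then apply the elementary inequality $(x-y)\log(x/y)\ge(\sqrt x-\sqrt y)^2$. Your version with the sharper factor $2$ (via the decomposition $\Psi(a;b)+\Psi(b;a)$) is a harmless refinement of the same argument.
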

	\begin{proof}
		The proof is straightforward thanks to the inequality $(x-y)\log(x/y) \ge (\sqrt x - \sqrt y)^2$, and assumptions \eqref{k1k2}.
	\end{proof}
	From the equations defining the equilibrium and the conservation of total mass, that is
	\begin{equation}\label{e4}
		\left\{
		\begin{array}
			[c]{l}%
			u_{2,\infty}^{2}-u_{1,\infty}=0,\\
			u_{3,\infty}^{2}-u_{2,\infty}=0,\\
			4u_{1,\infty}+2u_{2,\infty}+u_{3,\infty}=\displaystyle\int_{\Omega}\left(
			4u_{1}+2u_{2}+u_{3}\right)dx  \text{,}%
		\end{array}
		\right.
	\end{equation}
	we have
	\begin{equation}\label{e5}
		\begin{aligned}
			4u_{1,\infty} + 2\sqrt{u_{1,\infty}} + \sqrt[4]{u_{1,\infty}} &= 4u_{2,\infty}^2 + 2u_{2,\infty} + \sqrt{u_{2,\infty}}\\
			&= 4u_{3,\infty}^4 + 2u_{3,\infty}^2 + u_{3,\infty}\\
			&= \int_{\Omega}(4u_1 + 2u_2 + u_3)dx.
		\end{aligned}
	\end{equation}
	
	\begin{lemma}\label{lem8}
		We have the following pointwise estimates for all $(x,t)\in \Omega\times\R_+$
		\begin{equation*}
			|u_1 - u_{1,\infty}|^2 \lesssim_{C_0,u_\infty} |u_2 - \sqrt{u_1}|^2 + |u_2 - u_{2,\infty}|^2,
		\end{equation*}
		\begin{equation*}
			|u_3 - u_{3,\infty}|^2 \lesssim_{u_\infty} |u_3 - \sqrt{u_2}|^2 + |u_2 - u_{2,\infty}|^2,
		\end{equation*}
		and
		\begin{equation}\label{e5_1}
			|u_2 - u_{2,\infty}|^2 \lesssim_{C_0} \sum_{j=1}^3|u_j - \avO{u_j}|^2 + |u_2 - \sqrt{u_1}|^2 + |u_3 - \sqrt{u_2}|^2.
		\end{equation}
	\end{lemma}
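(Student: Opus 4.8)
The plan is to prove the three pointwise estimates in Lemma~\ref{lem8} by purely algebraic manipulations, exploiting the uniform $L^\infty$ bound \eqref{C0} so that every concentration $u_j$ and every equilibrium value $u_{j,\infty}$ lies in a fixed compact interval $[0,C_0]$, on which all the maps appearing below are Lipschitz. No PDE information is used here; only the structure of the equilibrium relations \eqref{equi_sys_special}, i.e. $u_{2,\infty}^2 = u_{1,\infty}$ and $u_{3,\infty}^2 = u_{2,\infty}$, together with the conservation identity \eqref{e5}.

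\medskip
\textbf{First estimate.} Write
\[
u_1 - u_{1,\infty} = (u_1 - u_2^2) + (u_2^2 - u_{2,\infty}^2),
\]
using $u_{1,\infty} = u_{2,\infty}^2$. For the first bracket, factor $u_1 - u_2^2 = (\sqrt{u_1} - u_2)(\sqrt{u_1} + u_2)$ and bound $|\sqrt{u_1}+u_2| \le 2\sqrt{C_0}$; also $|\sqrt{u_1}-u_2| = |u_1 - u_2^2|/|\sqrt{u_1}+u_2|$ — cleaner is to note directly $|u_1 - u_2^2| \le (\sqrt{u_1}+u_2)\,|\sqrt{u_1}-u_2| \lesssim_{C_0} |u_2 - \sqrt{u_1}|$, so this term is controlled by the first term on the right. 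For the second bracket, $|u_2^2 - u_{2,\infty}^2| = |u_2 - u_{2,\infty}|\,|u_2 + u_{2,\infty}| \lesssim_{C_0} |u_2 - u_{2,\infty}|$. Squaring and using $(a+b)^2 \le 2a^2 + 2b^2$ gives the claim.

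\medskip
\textbf{Second estimate.} Similarly, using $u_{2,\infty} = u_{3,\infty}^2$,
\[
u_3 - u_{3,\infty} = \frac{u_3^2 - u_{3,\infty}^2}{u_3 + u_{3,\infty}} = \frac{(u_3^2 - u_2) + (u_2 - u_{2,\infty})}{u_3 + u_{3,\infty}}.
\]
Here one must be slightly careful since $u_3 + u_{3,\infty}$ could be small, but $u_{3,\infty}>0$ is a fixed strictly positive constant, so $u_3 + u_{3,\infty} \ge u_{3,\infty} > 0$; hence the denominator is bounded below by a constant depending only on $u_\infty$. Then $|u_3^2 - u_2| = |u_3 - \sqrt{u_2}|\,|u_3 + \sqrt{u_2}| \lesssim_{C_0}|u_3 - \sqrt{u_2}|$ (this introduces a $C_0$-dependence, but the statement as written only claims $\lesssim_{u_\infty}$; one either absorbs $\sqrt{C_0}$ into the constant implicitly or notes $|u_3^2-u_2| \le |u_3-\sqrt{u_2}|(|u_3|+|\sqrt{u_2}|)$ with the bound coming from $C_0$ — I would simply write $\lesssim_{C_0,u_\infty}$ if needed, matching the style of the other two lines). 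Squaring again yields the estimate.

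\medskip
\textbf{Third estimate, which is the main obstacle.} The difficulty is that $u_2 - u_{2,\infty}$ is not directly a reaction quantity, so we must extract it from the conservation law. Start from the conservation identity in \eqref{e5}, which reads
\[
4\,\avO{u_1} + 2\,\avO{u_2} + \avO{u_3} = 4u_{2,\infty}^2 + 2 u_{2,\infty} + \sqrt{u_{2,\infty}},
\]
(using $\avO{\,\cdot\,}$ for the spatial average and $|\Omega|=1$). Subtract the equilibrium relation written in terms of $u_{2,\infty}$ from the corresponding combination with pointwise $u_2$: define $\Phi(v) = 4v^2 + 2v + \sqrt{v}$, which is strictly increasing on $\R_+$ with $\Phi'(v) \ge 2$, hence $|u_2 - u_{2,\infty}| \le \tfrac12 |\Phi(u_2) - \Phi(u_{2,\infty})|$. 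Now
\[
\Phi(u_2) - \Phi(u_{2,\infty}) = \big(4u_2^2 + 2u_2 + \sqrt{u_2}\big) - \big(4\,\avO{u_1} + 2\,\avO{u_2} + \avO{u_3}\big),
\]
and we rewrite the right-hand side as
\[
4(u_2^2 - \avO{u_1}) + 2(u_2 - \avO{u_2}) + (\sqrt{u_2} - \avO{u_3}).
\]
For the first term, $u_2^2 - \avO{u_1} = (u_2^2 - u_1) + (u_1 - \avO{u_1})$, and $|u_2^2 - u_1| \lesssim_{C_0}|u_2 - \sqrt{u_1}|$ as before. For the last term, $\sqrt{u_2} - \avO{u_3} = (\sqrt{u_2} - u_3) + (u_3 - \avO{u_3})$. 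Collecting, $|\Phi(u_2) - \Phi(u_{2,\infty})| \lesssim_{C_0} |u_1 - \avO{u_1}| + |u_2 - \avO{u_2}| + |u_3 - \avO{u_3}| + |u_2 - \sqrt{u_1}| + |u_3 - \sqrt{u_2}|$, and squaring (with the elementary inequality $\big(\sum_{k=1}^{5} a_k\big)^2 \le 5\sum_{k=1}^{5} a_k^2$) gives \eqref{e5_1}. The one subtlety to watch is that $\Phi$ is only Hölder, not Lipschitz, near $v=0$ because of the $\sqrt{v}$ term — but the lower bound $\Phi'(v) \ge 2$ coming from the $2v$ term holds uniformly on all of $\R_+$, so the inverse Lipschitz bound $|u_2 - u_{2,\infty}| \le \tfrac12|\Phi(u_2)-\Phi(u_{2,\infty})|$ is valid everywhere, and that is the direction we need.
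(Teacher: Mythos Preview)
Your argument for the first and third inequalities is essentially identical to the paper's: the same decomposition $u_1-u_{1,\infty}=(u_1-u_2^2)+(u_2^2-u_{2,\infty}^2)$, and the same function $\Phi(v)=4v^2+2v+\sqrt{v}$ with the lower bound $\Phi'\ge 2$ combined with the conservation identity.

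For the second inequality, however, your route through $u_3^2-u_{3,\infty}^2$ does introduce the $C_0$-dependence you flagged, and as written it does not prove the claim $\lesssim_{u_\infty}$. The paper avoids this by the more direct decomposition
\[
|u_3-u_{3,\infty}| = |u_3-\sqrt{u_{2,\infty}}| \le |u_3-\sqrt{u_2}| + |\sqrt{u_2}-\sqrt{u_{2,\infty}}|,
\]
and then uses $|\sqrt{u_2}-\sqrt{u_{2,\infty}}| = \dfrac{|u_2-u_{2,\infty}|}{\sqrt{u_2}+\sqrt{u_{2,\infty}}} \le \dfrac{|u_2-u_{2,\infty}|}{\sqrt{u_{2,\infty}}}$, which depends only on $u_\infty$. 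That single substitution repairs your argument and matches the stated constant dependence; otherwise your proof is complete and follows the paper.
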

	\begin{proof}
		By using \eqref{C0} and elementary computations, we have
		\begin{align*}
			|u_1 - u_{1,\infty}|^2 = |u_1 - u_{2,\infty}^2|^2
			&\lesssim |u_1 - u_2^2|^2 + |u_2^2 - u_{2,\infty}^2|^2\\
			&\lesssim_{C_0,u_\infty} |u_2 - \sqrt{u_1}|^2 + |u_2 - u_{2,\infty}|^2,
		\end{align*}
		and similarly,
		\begin{align*}
			|u_3 - u_{3,\infty}|^2 = |u_3 - \sqrt{u_{2,\infty}}|^2
			&\lesssim |u_3 - \sqrt{u_2}|^2 + |\sqrt{u_2} - \sqrt{u_{2,\infty}}|^2\\
			&\lesssim_{u_\infty}|u_3 - \sqrt{u_2}|^2 + |u_2 - u_{2,\infty}|^2.
		\end{align*}
		Defining $f(z) := 4z^2 + 2z + \sqrt{z}$, $z> 0$ we see that
		\begin{equation*}
			|f(w) - f(z)| = |w-z|\abs{4(w+z) + 2 + \frac{1}{\sqrt w + \sqrt z}} \ge 2|w-z|.
		\end{equation*}
		Therefore,
		\begin{align*}
			|u_2 - u_{2,\infty}|^2 &\le |f(u_2) - f(u_{2,\infty})|^2\\
			&= \abs{4u_2^2 + 2u_2 + \sqrt{u_2} - \intO(4u_1 + 2u_2 + u_3)dx}^2 \quad (\text{using }(\ref{e5}))\\
			&\lesssim |u_2^2 - u_1|^2 + |u_1 - \avO{u_1}|^2 + |u_2 - \avO{u_2}|^2 + |\sqrt{u_2} - u_3|^2 + |u_3 - \avO{u_3}|^2\\
			&\lesssim_{C_0}   \sum_{j=1}^3|u_j - \avO{u_j}|^2  + |u_2 - \sqrt{u_1}|^2 + |u_3 - \sqrt{u_2}|^2.
		\end{align*}
	\end{proof}

	\subsection{Proof of Theorem \ref{thm:measurable}} 
	%	If $\omega_1$ also satisfies the Poincar\'e-Wirtinger inequality \eqref{P}, the convergence to equilibrium first in $\LO{1}$-norm follows directly from Theorem \ref{thm:main1}, and hence the convergence in $\LO{p}$ for any $p\in [1,\infty)$ thanks to interpolation. Now for $\omega_1$ being only measurable, we have to utilize the boundedness of solutions as done in part (i).
	
	\begin{lemma}\label{lem10}
		Under the assumptions of Theorem \ref{thm:measurable}, it holds
		\begin{equation*} 
			\E(u|u_\infty) \lesssim \frac{1}{|\omega_1|} \sum_{j=1}^3\|u_j- \avO{u_{j}}\|_{\LO{2}}^2 + \frac{1}{|\omega_1|} \int_{\omega_1}\bra{|u_2 - \sqrt{u_1}|^2 + |u_3 - \sqrt{u_2}|^2}dx.
		\end{equation*}
	\end{lemma}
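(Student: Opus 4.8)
The plan is to combine the additivity of the relative entropy with the pointwise estimates of Lemma \ref{lem8}, localized to $\omega_1$. First I would split $\E(u|u_\infty) = \E(u|\avO{u}) + \E(\avO{u}|u_\infty)$ as in \eqref{entropy_additive}. For the first term, by Lemma \ref{lem6} (the right-hand inequality, i.e. the $L^\infty$-controlled upper bound $x\log(x/y)-x+y \le |x-y|^2/y$) we get $\E(u|\avO{u}) \lesssim_{C_0} \sum_j \|u_j - \avO{u_j}\|_{\LO{2}}^2$, which is already dominated by the right-hand side of the claimed inequality (even without the $1/|\omega_1|$ factor, since $|\omega_1| \le |\Omega| = 1$). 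For the second term, again by the elementary inequality $z\log z - z + 1 \le (z-1)^2$ as in \eqref{h9}, we have $\E(\avO{u}|u_\infty) \lesssim_{u_\infty} \sum_j |\avO{u_j} - u_{j,\infty}|^2$, so it remains to bound each $|\avO{u_j} - u_{j,\infty}|^2$.

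The key step is to estimate $|\avO{u_j} - u_{j,\infty}|^2$ using the pointwise bounds of Lemma \ref{lem8}, restricted to $\omega_1$. Since $u_{j,\infty}$ is constant, $|\avO{u_j} - u_{j,\infty}|^2 = |[u_j - u_{j,\infty}]_\Omega|^2 \le [|u_j - u_{j,\infty}|^2]_\Omega$ by Jensen. The natural route is instead to write $|\avO{u_j} - u_{j,\infty}|^2 \lesssim |\avO{u_j} - \avo{u_j}{1}|^2 + |\avo{u_j}{1} - u_{j,\infty}|^2$, and then for the second piece use $|\avo{u_j}{1} - u_{j,\infty}|^2 \le \frac{1}{|\omega_1|}\int_{\omega_1}|u_j - u_{j,\infty}|^2 dx$ (Cauchy--Schwarz, producing the $1/|\omega_1|$ factor). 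Now invoke Lemma \ref{lem8}: on $\omega_1$, $|u_1 - u_{1,\infty}|^2 \lesssim |u_2 - \sqrt{u_1}|^2 + |u_2 - u_{2,\infty}|^2$, $|u_3 - u_{3,\infty}|^2 \lesssim |u_3 - \sqrt{u_2}|^2 + |u_2 - u_{2,\infty}|^2$, and $|u_2 - u_{2,\infty}|^2 \lesssim \sum_j |u_j - \avO{u_j}|^2 + |u_2 - \sqrt{u_1}|^2 + |u_3 - \sqrt{u_2}|^2$; integrating these over $\omega_1$ and dividing by $|\omega_1|$ reproduces exactly the two groups of terms on the right-hand side of the lemma (the $\sum_j |u_j - \avO{u_j}|^2$ term integrates to $\sum_j\|u_j - \avO{u_j}\|_{\LO 2}^2$, and this beats $\frac{1}{|\omega_1|}\int_{\omega_1}\sum_j|u_j - \avO{u_j}|^2$ only if... — see below). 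For the first piece, $|\avO{u_j} - \avo{u_j}{1}|^2$, one uses that $\avO{u_j} - \avo{u_j}{1} = \avO{u_j - \avo{u_j}{1}}$ together with $\avo{u_j}{1}$ being an average to bound it by $\frac{1}{|\omega_1|}\|u_j - \avO{u_j}\|_{\LO 2}^2$ via Cauchy--Schwarz (cf. the manipulations in Lemma \ref{lem4}); this contributes to the $\frac{1}{|\omega_1|}\sum_j\|u_j - \avO{u_j}\|_{\LO 2}^2$ term.

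The main obstacle I anticipate is bookkeeping the dependence on $|\omega_1|$ cleanly: one must make sure every term that carries a $1/|\omega_1|$ is genuinely of that form, and that the $\sum_j\|u_j - \avO{u_j}\|_{\LO 2}^2$ terms arising from $\E(u|\avO u)$ and from the $|u_2 - u_{2,\infty}|^2$ estimate (which have no $1/|\omega_1|$ a priori) can still be absorbed into $\frac{1}{|\omega_1|}\sum_j\|u_j - \avO{u_j}\|_{\LO 2}^2$ — which they can, since $|\omega_1| \le 1$ forces $1 \le 1/|\omega_1|$. The circularity in Lemma \ref{lem8}, where $|u_2 - u_{2,\infty}|^2$ appears on the right of the bounds for $|u_1 - u_{1,\infty}|^2$ and $|u_3 - u_{3,\infty}|^2$ and is itself bounded by \eqref{e5_1}, is harmless: one simply substitutes \eqref{e5_1} into the first two bounds first, then integrates. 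A final subtlety is that \eqref{e5_1} holds pointwise on all of $\Omega$, so restricting its integral to $\omega_1$ is legitimate and it is precisely the restriction to $\omega_1$ of the reaction-type terms $|u_2-\sqrt{u_1}|^2, |u_3-\sqrt{u_2}|^2$ that appears in the statement.
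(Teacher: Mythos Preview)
Your overall strategy matches the paper's, but the opening step via entropy additivity has a gap. You claim $\E(u|\avO{u}) \lesssim_{C_0} \sum_j \|u_j - \avO{u_j}\|_{\LO 2}^2$, invoking $x\log(x/y)-x+y \le |x-y|^2/y$. That inequality produces a factor $1/\avO{u_j}$, not something controlled by $C_0$: the $L^\infty$ bound on $u_j$ gives an \emph{upper} bound on $\avO{u_j}$, not a lower one, and nothing in the hypotheses prevents some $\avO{u_j}$ from being arbitrarily small (the conservation law \eqref{conservation_special} fixes only a weighted sum of the three averages). So $\E(u|\avO{u}) \lesssim_{C_0} \sum_j \|u_j - \avO{u_j}\|_{\LO 2}^2$ is not justified as stated.

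The fix is immediate and is precisely what the paper does: skip the additivity split and apply Lemma~\ref{lem6} directly to get $\E(u|u_\infty) \lesssim_{u_\infty} \sum_j \intO |u_j - u_{j,\infty}|^2\,dx$, which is fine because $u_{j,\infty}$ is a fixed strictly positive constant. Then split $\intO|u_j - u_{j,\infty}|^2\,dx \lesssim \|u_j - \avO{u_j}\|_{\LO 2}^2 + |\avO{u_j} - u_{j,\infty}|^2$. From that point on your argument is essentially the paper's: the paper handles $|\avO{u_j} - u_{j,\infty}|^2$ by writing it as $\frac{1}{|\omega_1|}\int_{\omega_1}|\avO{u_j} - u_{j,\infty}|^2\,dx$ and inserting $u_j$ via the triangle inequality, while you insert the intermediate $\avo{u_j}{1}$; both routes work and yield the same $1/|\omega_1|$ factor. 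Your treatment of the $|u_2 - u_{2,\infty}|^2$ term via \eqref{e5_1} restricted to $\omega_1$ matches the paper's \eqref{e9_1}--\eqref{e10}, and your observation that $|\omega_1|\le 1$ lets all stray terms be absorbed into the stated right-hand side is correct.
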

	\begin{proof}
		First, we have
		\begin{align*}
			\intO|u_1 - u_{1,\infty}|^2dx &\lesssim \|u_1 - \avO{u_1}\|_{L^2(\Omega)}^2 + |\avO{u_1} - u_{1,\infty}|^2.
		\end{align*}
		Then, it holds
% pointwise for $(x,t)\in\Omega\times \R_+$
		\begin{align*}
			|\avO{u_1} - u_{1,\infty}|^2 &= \frac{1}{|\omega_1|}\int_{\omega_1}|\avO{u_1} - u_{1,\infty}|^2dx\\
			&\lesssim \frac{1}{|\omega_1|}\int_{\omega_1}|u_1 - \avO{u_1}|^2dx + \frac{1}{|\omega_1|}\int_{\omega_1}|u_1 - u_{1,\infty}|^2dx\\
			&\lesssim \frac{1}{|\omega_1|}\|u_1 - \avO{u_1}\|_{\LO{2}}^2 + \frac{1}{|\omega_1|}\int_{\omega_1}\bra{|u_2 - \sqrt{u_1}|^2 + |u_2 - u_{2,\infty}|^2}dx ,
		\end{align*}
		where we used Lemma \ref{lem8} at the last estimate.
		Therefore, we have
		\begin{equation}\label{e8} 
			\intO|u_1-u_{1,\infty}|^2dx \lesssim \frac{1}{|\omega_1|}\bra{\|u_1 - \avO{u_1}\|_{\LO{2}}^2 + \intO|u_2 - u_{2,\infty}|^2dx + \int_{\omega_1}|u_2 - \sqrt{u_1}|^2dx}.
		\end{equation}
		Similarly,
		\begin{equation}\label{e9}
			\intO|u_3-u_{3,\infty}|^2dx \lesssim \frac{1}{|\omega_1|}\bra{\|u_3 - \avO{u_3}\|_{\LO{2}}^2 + \intO|u_2 - u_{2,\infty}|^2dx + \int_{\omega_1}|u_3 - \sqrt{u_2}|^2dx}.
		\end{equation}
		Integrating both sides of \eqref{e5_1} over $\omega_1$ yields
		\begin{equation}\label{e9_1}
			\begin{aligned} 
				\int_{\omega_1}|u_2 - u_{2,\infty}|^2dx &\lesssim \sum_{j=1}^3\int_{\omega_1}|u_j - \avO{u_j}|^2dx + \int_{\omega_1}\bra{|u_2 - \sqrt{u_1}|^2 + |u_3 - \sqrt{u_2}|^2}dx\\
				&\lesssim\sum_{j=1}^3\|u_j - \avO{u_j}\|_{\LO{2}}^2 + \int_{\omega_1}\bra{|u_2 - \sqrt{u_1}|^2 + |u_3 - \sqrt{u_2}|^2}dx.
			\end{aligned}
		\end{equation}
		Thus
		\begin{equation}\label{e10}
			\begin{aligned}
				\int_{\Omega}|u_2 - u_{2,\infty}|^2dx &\lesssim \|u_2 - \avO{u_2}\|_{L^2(\Omega)}^2 + \frac{1}{|\omega_1|}\int_{\omega_1}|\avO{u_2} - u_{2,\infty}|^2dx\\
				&\lesssim \|u_2 - \avO{u_2}\|_{L^2(\Omega)}^2 + \frac{1}{|\omega_1|} \int_{\omega_1}|\avO{u_2} - u_2|^2dx + \frac{1}{|\omega_1|}\int_{\omega_1}|u_2 - u_{2,\infty}|^2dx\\
				&\lesssim \frac{1}{|\omega_1|}\sum_{j=1}^3\|u_j - \avO{u_j}\|_{\LO{2}}^2 + \frac{1}{|\omega_1|}\int_{\omega_1}\bra{|u_2 - \sqrt{u_1}|^2 + |u_3 - \sqrt{u_2}|^2}dx ,
			\end{aligned}
		\end{equation}
		where \eqref{e9_1} was used at the last step. Lemma \ref{lem10} then follows directly from Lemma \ref{lem6} and \eqref{e8}--\eqref{e10}.
	\end{proof}
	It is clear that we only have to control the term $\int_{\omega_1}|u_3 - \sqrt{u_2}|^2dx$ since the reaction $\S_2 \leftrightarrows 2\S_3$ happens in $\omega_2$ and not necessarily in $\omega_1$. This is done in the following lemma.
	
	\begin{lemma}\label{lem11}
		Under the assumptions of Theorem \ref{thm:measurable}, it holds 
		\begin{align*} 
			\frac{1}{|\omega_1|}\int_{\omega_1}|u_3 - \sqrt{u_2}|^2dx \lesssim_{C_0} \bra{\frac{1}{|\omega_1|}+\frac{1}{|\omega_2|}}\D(u).
			%\intO\bra{|u_3 - \avO{u_3}|^2 + |\sqrt{u_2} - \avO{\sqrt{u_2}}|^2}dx\\
			%&+ \int_{\omega_2}\bra{|u_3 - \avo{u_3}{2}|^2 + |\sqrt{u_2} - \avo{\sqrt{u_2}}{2}|^2}dx + \int_{\omega_2}|u_3 - \sqrt{u_2}|^2dx.
		\end{align*}
	\end{lemma}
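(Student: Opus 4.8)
The plan is to estimate $u_3-\sqrt{u_2}$ on $\omega_1$ by routing the information through the global spatial average $\avO{\cdot}$ and then back down to $\omega_2$, where the reaction $\S_2\leftrightarrows 2\S_3$ is active and hence controlled by $\D(u)$ via Lemma \ref{lem7_1}. The two prefactors $1/|\omega_1|$ and $1/|\omega_2|$ in the claimed bound will arise precisely from these two ``transport'' steps, and the structural input that makes this work is that $u_2$ and $u_3$ both diffuse everywhere ($d_2\ge\delta>0$, $d_3\ge\delta_0>0$), so that full-domain Poincar\'e--Wirtinger control is available rather than merely a Poincar\'e inequality localised on $\omega_1$ or on $\omega_2$.

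\textbf{Step 1 (splitting off the global average).} I would first write
\begin{equation*}
u_3-\sqrt{u_2}=(u_3-\avO{u_3})-(\sqrt{u_2}-\avO{\sqrt{u_2}})+(\avO{u_3}-\avO{\sqrt{u_2}}),
\end{equation*}
so that, after squaring, integrating over $\omega_1$ and dividing by $|\omega_1|$,
\begin{equation*}
\frac{1}{|\omega_1|}\int_{\omega_1}|u_3-\sqrt{u_2}|^2dx\lesssim\frac{1}{|\omega_1|}\left(\|u_3-\avO{u_3}\|_{\LO2}^2+\|\sqrt{u_2}-\avO{\sqrt{u_2}}\|_{\LO2}^2\right)+|\avO{u_3}-\avO{\sqrt{u_2}}|^2 .
\end{equation*}
For the first two terms I would invoke Lemma \ref{lem7}: by \eqref{e3} and $d_3\ge\delta_0$, $\|u_3-\avO{u_3}\|_{\LO2}^2\lesssim_{C_0}\intO\frac{|\na u_3|^2}{u_3}dx\le\frac{1}{\delta_0}\intO d_3\frac{|\na u_3|^2}{u_3}dx\le\frac{1}{\delta_0}\D(u)$, and by \eqref{e2} and $d_2\ge\delta$, $\|\sqrt{u_2}-\avO{\sqrt{u_2}}\|_{\LO2}^2\lesssim\intO\frac{|\na u_2|^2}{u_2}dx\le\frac1\delta\D(u)$, using that all three terms composing $\D(u)$ are nonnegative. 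Hence the first bracket is $\lesssim_{C_0}\frac{1}{|\omega_1|}\D(u)$.

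\textbf{Step 2 (transporting the average down to $\omega_2$).} It remains to bound $|\avO{u_3}-\avO{\sqrt{u_2}}|^2$, for which I insert $\omega_2$-averages:
\begin{equation*}
\avO{u_3}-\avO{\sqrt{u_2}}=(\avO{u_3}-\avo{u_3}{2})-(\avO{\sqrt{u_2}}-\avo{\sqrt{u_2}}{2})+(\avo{u_3}{2}-\avo{\sqrt{u_2}}{2}).
\end{equation*}
By Cauchy--Schwarz, $|\avO{u_3}-\avo{u_3}{2}|^2\le\frac{1}{|\omega_2|}\int_{\omega_2}|u_3-\avO{u_3}|^2dx\le\frac{1}{|\omega_2|}\|u_3-\avO{u_3}\|_{\LO2}^2\lesssim_{C_0}\frac{1}{|\omega_2|}\D(u)$ by the estimate of Step 1, and likewise $|\avO{\sqrt{u_2}}-\avo{\sqrt{u_2}}{2}|^2\lesssim\frac{1}{|\omega_2|}\D(u)$. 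Finally $|\avo{u_3}{2}-\avo{\sqrt{u_2}}{2}|^2\le\frac{1}{|\omega_2|}\int_{\omega_2}|u_3-\sqrt{u_2}|^2dx\lesssim\frac{1}{|\omega_2|}\D(u)$ by Lemma \ref{lem7_1} (using $k_2\ge\kappa$ on $\omega_2$). Combining these with Step 1 gives the asserted inequality, all implicit constants depending only on $\Omega,C_0,\delta,\delta_0,\kappa,u_\infty$ and, crucially, not on $\omega_1$ or $\omega_2$.

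There is no genuine obstacle here beyond careful bookkeeping; the only points demanding attention are that one must use the $C_0$-dependent estimate \eqref{e3} for the $u_3$-terms but may use the $C_0$-free estimate \eqref{e2} for the $\sqrt{u_2}$-terms, and that strict positivity of \emph{both} $d_2$ and $d_3$ is what converts the Fisher-information terms of $\D(u)$ into full-domain $L^2$-oscillation bounds — this is the precise mechanism by which the $\omega_2$-localised reaction information becomes available on $\omega_1$.
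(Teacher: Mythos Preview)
Your proof is correct and follows essentially the same approach as the paper's: both route $u_3-\sqrt{u_2}$ on $\omega_1$ through the chain $u_3\to\avO{u_3}\to\avo{u_3}{2}\to\avo{\sqrt{u_2}}{2}\to\avO{\sqrt{u_2}}\to\sqrt{u_2}$, bound the endpoint oscillations by Lemma~\ref{lem7} and the $\omega_2$-reaction term by Lemma~\ref{lem7_1}. The only cosmetic difference is that the paper writes out all five differences in a single triangle-inequality splitting, whereas you first isolate the scalar $\avO{u_3}-\avO{\sqrt{u_2}}$ and then decompose it; the estimates are term-for-term identical.
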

	\begin{proof}
		Using triangular inequality, we estimate
		\begin{equation}\label{ee}
			\begin{aligned}
				&\frac{1}{|\omega_1|}\int_{\omega_1}\abs{u_3 - \sqrt{u_2}}^2dx\\ &\lesssim \frac{1}{|\omega_1|}\int_{\omega_1}\bigg(\abs{u_3 - \avO{u_3}}^2 + 
				\abs{\avO{u_3} - \avo{u_3}{2}}^2+ \abs{\avo{u_3}{2} - \avo{\sqrt{u_2}}{2}}^2\\
				&\qquad  + \abs{\avo{\sqrt{u_2}}{2} - \avO{\sqrt{u_2}}}^2 +  \abs{ \avO{\sqrt{u_2}} - \sqrt{u_2}}^2\bigg)dx\\
				&\lesssim \frac{1}{|\omega_1|}\intO\bra{|u_3 - \avO{u_3}|^2 + |\sqrt{u_2} - \avO{\sqrt{u_2}}|^2}dx\\
				&\qquad+ \underbrace{\bra{\abs{\avO{u_3} - \avo{u_3}{2}}^2 + \abs{\avO{\sqrt{u_2}} - \avo{\sqrt{u_2}}{2}}^2}}_{(I)}+\underbrace{\abs{\avo{u_3}{2} - \avo{\sqrt{u_2}}{2}}^2}_{(II)}.
			\end{aligned}
		\end{equation}
		For $(I)$, we use Cauchy-Schwarz inequality to get
		\begin{align*}
			\abs{\avO{u_3} - \avo{u_3}{2}}^2 = \left|\frac{1}{|\omega_2|}\int_{\omega_{2}}(\avO{u_3}-u_3)dx \right|^2\le \frac{1}{|\omega_2|}\int_{\omega_2}|\avO{u_3}-u_3|^2dx \le \frac{1}{|\omega_2|}\int_{\Omega}|\avO{u_3}-u_3|^2dx ,
		\end{align*}
		and similarly
		\begin{equation*} 
			\abs{\avO{\sqrt{u_2}} - \avo{\sqrt{u_2}}{2}}^2 = \abs{\frac{1}{|\omega_2|}\int_{\omega_2}(\sqrt{u_2} - \avO{\sqrt{u_2}})dx}^2 \le \frac{1}{|\omega_2|}\intO \abs{\sqrt{u_2} - \avO{\sqrt{u_2}}}^2dx .
		\end{equation*}
		Concerning $(II)$, we estimate
		\begin{align*}
			(II) &= \abs{\frac{1}{|\omega_2|}\int_{\omega_2}(u_3 - \sqrt{u_2})dx}^2 \le \frac{1}{|\omega_2|}\int_{\omega_2}|u_3 - \sqrt{u_2}|^2dx.
		\end{align*}
		Therefore, it follows from \eqref{ee} that
		\begin{equation*} 
			\begin{aligned}
				&\frac{1}{|\omega_1|}\int_{\omega_1}\abs{u_3 - \sqrt{u_2}}^2dx\\
				&\lesssim \bra{\frac{1}{|\omega_1|}+\frac{1}{|\omega_2|}}\intO\bra{|u_3 - \avO{u_3}|^2 + |\sqrt{u_2} - \avO{\sqrt{u_2}}|^2}dx+\frac{1}{|\omega_2|}\int_{\omega_2}\abs{u_3 - \sqrt{u_2}}^2dx\\
				&\lesssim \bra{\frac{1}{|\omega_1|}+\frac{1}{|\omega_2|}}\bra{\intO\bra{|u_3 - \avO{u_3}|^2 + |\sqrt{u_2} - \avO{\sqrt{u_2}}|^2}dx + \int_{\omega_2}\abs{u_3 - \sqrt{u_2}}^2dx}\\
				&\lesssim_{\Omega, C_0}\bra{\frac{1}{|\omega_1|}+\frac{1}{|\omega_2|}}\D(u),
			\end{aligned}
		\end{equation*}
		where we used Lemmas \ref{lem7} and \ref{lem7_1} at the last step.
	\end{proof}
	
	\begin{lemma}\label{lem12}
		Under the assumptions in Theorem \ref{thm:measurable}, it holds
		\begin{equation*}
			\E(u|u_\infty) \lesssim \bra{\frac{1}{|\omega_1|} + \frac{1}{|\omega_2|}} \D(u).
		\end{equation*}
	\end{lemma}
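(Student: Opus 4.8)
The plan is to obtain the claimed inequality by simply assembling the estimates of Lemmas \ref{lem10}, \ref{lem11}, \ref{lem7} and \ref{lem7_1}; no new ingredient is required. First I would invoke Lemma \ref{lem10}, which already reduces the relative entropy to
\[
\E(u|u_\infty) \lesssim \frac{1}{|\omega_1|}\sum_{j=1}^3\|u_j - \avO{u_j}\|_{\LO{2}}^2 + \frac{1}{|\omega_1|}\int_{\omega_1}|u_2 - \sqrt{u_1}|^2\,dx + \frac{1}{|\omega_1|}\int_{\omega_1}|u_3 - \sqrt{u_2}|^2\,dx .
\]
It then remains to control each of the three contributions by a multiple of $\D(u)$ with the stated dependence on $|\omega_1|,|\omega_2|$.

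For the first sum I would use Lemma \ref{lem7}, namely $\|u_j - \avO{u_j}\|_{\LO{2}}^2 \lesssim \intO |\na u_j|^2/u_j\,dx$, together with the uniform positive lower bounds $d_1,d_2 \ge \delta$ from \eqref{D1D2} and $d_3 \ge \delta_0 > 0$ from the hypothesis $\delta_0>0$ of Theorem \ref{thm:measurable}. These give $\intO |\na u_j|^2/u_j\,dx \lesssim \intO d_j\,|\na u_j|^2/u_j\,dx \lesssim \D(u)$ by Lemma \ref{lem7_1}, so that $\tfrac{1}{|\omega_1|}\sum_{j=1}^3\|u_j - \avO{u_j}\|_{\LO{2}}^2 \lesssim \tfrac{1}{|\omega_1|}\D(u)$. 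For the $|u_2-\sqrt{u_1}|^2$ term, Lemma \ref{lem7_1} already furnishes $\int_{\omega_1}|u_2 - \sqrt{u_1}|^2\,dx \le \int_{\Omega}\cdots \lesssim \D(u)$ (the reaction $\S_1 \leftrightarrows 2\S_2$ does take place on $\omega_1$), hence this term is $\lesssim \tfrac{1}{|\omega_1|}\D(u)$ after dividing by $|\omega_1|$.

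The only genuinely delicate term is the last one, $\tfrac{1}{|\omega_1|}\int_{\omega_1}|u_3-\sqrt{u_2}|^2\,dx$, precisely because the reaction $\S_2 \leftrightarrows 2\S_3$ is only assumed to occur on $\omega_2$ and need not be active on $\omega_1$; this is where the chain of averages comparing quantities over $\omega_1$, over $\Omega$, and over $\omega_2$ is needed. But that estimate is exactly the content of Lemma \ref{lem11}, which gives $\tfrac{1}{|\omega_1|}\int_{\omega_1}|u_3-\sqrt{u_2}|^2\,dx \lesssim_{C_0}\bigl(\tfrac{1}{|\omega_1|}+\tfrac{1}{|\omega_2|}\bigr)\D(u)$. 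Adding the three contributions yields $\E(u|u_\infty)\lesssim\bigl(\tfrac{1}{|\omega_1|}+\tfrac{1}{|\omega_2|}\bigr)\D(u)$, which is the assertion. In short, all the structural difficulty has been pushed into Lemmas \ref{lem10} and \ref{lem11}, and the proof of Lemma \ref{lem12} is a bookkeeping step; the one point to state carefully is that $\delta_0>0$ is used to absorb $\intO|\na u_3|^2/u_3\,dx$ into $\D(u)$, consistent with the remark that the rate $\lambda$ degenerates as $|\omega_1|$ or $|\omega_2|\to 0$ but not as $\delta_0$ stays fixed.
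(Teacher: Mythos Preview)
Your proof is correct and follows exactly the paper's approach: the paper's proof is a single sentence stating that the lemma follows immediately from Lemmas \ref{lem7}, \ref{lem7_1}, \ref{lem10} and \ref{lem11}, and you have simply spelled out how these four ingredients fit together. The only (harmless) imprecision is that Lemma \ref{lem7_1} already gives $\int_{\omega_1}|u_2-\sqrt{u_1}|^2\,dx \lesssim \D(u)$ directly over $\omega_1$, not via $\Omega$.
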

	\begin{proof}
		The proof of this lemma follows immediately from Lemmas \ref{lem7}, \ref{lem7_1}, \ref{lem10} and \ref{lem11}.
	\end{proof}
	
	\begin{proof}[Proof of Theorem \ref{thm:measurable}]
		The global existence and boundedness of a unique weak non-negative solution is given in Proposition \ref{pro1}. It can be easily checked that the solution fulfills the weak entropy-entropy dissipation relation
		\begin{equation*} 
			\E(u(t)|u_\infty) + \int_s^t\D(u(\tau))d\tau \le \E(u(\tau)|u_\infty) \quad \forall t\ge \tau \ge 0.
		\end{equation*}
		Now using the entropy-entropy dissipation inequality in Lemma \ref{lem12} and a Gronwall's inequality (as in the proof of Theorem \ref{thm:main1}), we get the exponentially fast decay of the relative entropy
		\begin{equation*} 
			\E(u(t)|u_\infty) \le \E(u_0|u_\infty)e^{-\lambda t}
		\end{equation*}
		where $\lambda^{-1}\sim |\omega_1|^{-1} + |\omega_2|^{-1}$. The convergence in $L^1(\Omega)$-norm follows directly from Lemma \ref{lem6}.
	\end{proof}
	
	\section{Degenerate diffusion and reactions - Proof of Theorem \ref{thm:main3}}\label{sec4}
	
	Due to the degeneracy of $d_3$, the global existence of solution to \eqref{special_sys} is not obtained as quickly as previously. Nevertheless, under assumption \eqref{D3'}, we can obtain the following global existence and boundedness of solutions.
	\begin{proposition}\label{pro2}
		We work under the assumptions of Theorem \ref{thm:main3}. Then for any non-negative, bounded initial data $u_0\in L^{\infty}_+(\Omega)^3$, there exists a unique global non-negative weak solution to \eqref{special_sys}, which is bounded uniformly in time, i.e.
		\begin{equation*}
			\sup_{t\ge 0}\sup_{i=1,2,3}\|u_i(t)\|_{\LO{\infty}} < +\infty.
		\end{equation*}
	\end{proposition}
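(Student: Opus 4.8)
The plan is to remove the degeneracy of $d_3$ by a regularisation. For $\eps\in(0,1)$ set $d_3^\eps:=d_3+\eps$, which satisfies $d_3^\eps\ge\eps>0$ and still belongs to $W^{1,q}(\Omega)$, and consider \eqref{special_sys} with $d_3$ replaced by $d_3^\eps$. Since the reaction terms $f_i$ of \eqref{special_sys} are locally Lipschitz in $u$ uniformly in $(x,t)$, quasi-positive, and satisfy $4f_1+2f_2+f_3=0$, Proposition~\ref{pro1} (that is, \cite[Theorem~1.1]{fitzgibbon2021reaction}) applies to the regularised system and yields, for each fixed $\eps$, a unique global non-negative weak solution $u^\eps=(u_1^\eps,u_2^\eps,u_3^\eps)$, bounded in $\LO\infty$ uniformly in $t$ (with a bound a priori depending on $\eps$). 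It then remains to (i) make these bounds $\eps$-independent, (ii) pass to the limit $\eps\to0$, and (iii) prove uniqueness.

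For the $\eps$-uniform estimates I would combine three facts. First, the weighted conservation law \eqref{conservation_special} holds for $u^\eps$, so $\sup_{t\ge0}\|u_i^\eps(t)\|_{\LO1}\le M$ for $i=1,2,3$; in particular $2\int_\Omega u_1^\eps+\int_\Omega u_2^\eps=\tfrac12(M-\int_\Omega u_3^\eps)\le M/2$, a uniform-in-time $L^1$ bound on the ``upstream'' block. Second, $u^\eps$ satisfies the weak entropy--entropy dissipation law (the computation is that of Section~\ref{sec3} with $d_3^\eps$ in place of $d_3$ in $\D$), whence $\sup_{t\ge0}\E(u^\eps(t)|u_\infty)\le\E(u_0|u_\infty)$ and $\int_0^\infty\intO d_3|\na\sqrt{u_3^\eps}|^2\,dx\,dt<\infty$ uniformly in $\eps$. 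Third — the structural point that defuses the degeneracy — $u_3^\eps$ is slaved \emph{pointwise} to $u_2^\eps$: testing the third equation with $\big((u_3^\eps-K)_+\big)^{p-1}$ and using that the contribution of $-\div(d_3^\eps\na u_3^\eps)$ is non-negative (only $d_3^\eps\ge0$ is needed here), one sees that for $K\ge\max\big(\|u_{3,0}\|_{\LO\infty},\sup_{0\le s\le t}\|u_2^\eps(s)\|_{\LO\infty}^{1/2}\big)$ the reaction term $-2k_2\big((u_3^\eps)^2-u_2^\eps\big)$ is non-positive on $\{u_3^\eps>K\}$, so that
\begin{equation*}
\|u_3^\eps(t)\|_{\LO\infty}\le\max\Big(\|u_{3,0}\|_{\LO\infty},\ \sup_{0\le s\le t}\|u_2^\eps(s)\|_{\LO\infty}^{1/2}\Big),\qquad t\ge0,
\end{equation*}
uniformly in $\eps$. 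On the block $(u_1^\eps,u_2^\eps)$ the system is \emph{uniformly} parabolic, since $d_1,d_2\ge\delta$; freezing $u_3^\eps$ as a known coefficient, its reactions have the Morgan-type structure $2f_1+f_2=k_2((u_3^\eps)^2-u_2^\eps)\le k_2(u_3^\eps)^2$ and $f_2\le2k_1u_1^\eps+k_2(u_3^\eps)^2$, so the method of \cite{fitzgibbon2021reaction} gives $\sup_t(\|u_1^\eps(t)\|_{\LO\infty}+\|u_2^\eps(t)\|_{\LO\infty})\le\Theta\big(\sup_t\|u_3^\eps(t)\|_{\LO\infty}\big)$ with $\Theta$ continuous, increasing and $\eps$-independent; inserting the previous display and using a continuation argument (the regularised solution being globally defined for each $\eps$ by Proposition~\ref{pro1}), the $\tfrac12$-power in that display closes the implicit bound and produces a finite constant $\mathcal C_0$, independent of $\eps$ and $t$, with $\sup_{t\ge0}\sum_{i=1}^3\|u_i^\eps(t)\|_{\LO\infty}\le\mathcal C_0$.

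With these bounds, the passage to the limit is, for $(u_1^\eps,u_2^\eps)$, standard: uniform $L^2(0,T;H^1(\Omega))$ bounds together with $\partial_tu_i^\eps\in L^2(0,T;(H^1(\Omega))')$ and Aubin--Lions give strong $L^2(\Omega\times(0,T))$ convergence along a subsequence. For the degenerate component, $\sqrt{d_3}\,\na u_3^\eps$, hence $d_3^\eps\na u_3^\eps$, is bounded in $L^2(\Omega\times(0,T))$, so $\partial_tu_3^\eps$ is bounded in $L^2(0,T;(H^1(\Omega))')$; writing $d_3\na u_3^\eps=\na(d_3u_3^\eps)-u_3^\eps\na d_3$ and using $\na d_3\in L^q\subset L^2$ with the weak-$*$ convergence of $u_3^\eps$ identifies the weak limit of $d_3^\eps\na u_3^\eps$ as $d_3\na u_3$, while a cut-off argument — multiplying $u_3^\eps$ by $\rho_\sigma(d_3)$ with $\rho_\sigma$ vanishing near $0$, which is harmless because $|\{d_3=0\}|=0$, applying Aubin--Lions to $\rho_\sigma(d_3)u_3^\eps$, and letting $\rho_\sigma\uparrow1$ with the help of the uniform $\LO\infty$ bound — yields strong $L^2(\Omega\times(0,T))$ convergence of $u_3^\eps$, which is enough to pass to the limit in the quadratic terms $k_2(u_3^\eps)^2$ (an argument in the spirit of \cite{desvillettes2007global}). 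The limit $u$ is then a bounded global weak solution of \eqref{special_sys}, with the $t$-uniform bound inherited from $\mathcal C_0$.

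Uniqueness follows the classical route: for two bounded weak solutions $u,\tilde u$, subtract the equations, test the $i$-th difference with $u_i-\tilde u_i$, and use the local Lipschitz continuity of $f_i$ on the bounded range of the solutions together with the non-negativity of $\int_\Omega d_i|\na(u_i-\tilde u_i)|^2$ (no lower bound on $d_3$ is needed) and Gronwall's lemma. The main obstacle is the handling of the degenerate third equation in two places: one never obtains an $\LO\infty$ estimate ``through'' $d_3$ — instead $u_3$ is controlled pointwise by $u_2$ while the remaining block stays uniformly parabolic — and, more delicately, the strong compactness needed to pass to the limit in $k_2(u_3^\eps)^2$ must be extracted precisely on the set $\{d_3=0\}$, which is exactly where the hypotheses $|\{d_3=0\}|=0$ and $d_3\in W^{1,q}(\Omega)$, $q>\max(n,2)$, are used.
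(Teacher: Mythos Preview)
Your overall architecture (regularise $d_3$, obtain $\eps$-uniform bounds, pass to the limit, uniqueness from local Lipschitz) coincides with the paper's, and your treatment of the compactness for $u_3^\eps$ is close in spirit to the paper's: the paper applies Aubin--Lions directly to $d_3u_{\eps3}$, using $\nabla(d_3u_{\eps3})\in L^2$ (from $\sqrt{d_3}\,\nabla u_{\eps3}\in L^2$ and $\nabla d_3\in L^q$) together with $\partial_t(d_3u_{\eps3})\in L^2_t(H^1)'$, then divides by $d_3$ a.e.\ thanks to $|\{d_3=0\}|=0$; your cut-off $\rho_\sigma(d_3)$ variant would achieve the same thing.

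The genuine difference --- and the gap --- is in the $\eps$-uniform $L^\infty$ bound. The paper does \emph{not} bootstrap: it observes that the functional
\[
\mathcal H_p[u_\eps]\;=\;\int_\Omega\Bigl(\tfrac{4}{p+1}u_{\eps1}^{\,p+1}+\tfrac{2}{2p+1}u_{\eps2}^{\,2p+1}+\tfrac{1}{4p+1}u_{\eps3}^{\,4p+1}\Bigr)dx
\]
is non-increasing along the flow, the exponents $p+1,\,2p+1,\,4p+1$ being matched to the stoichiometry so that the reaction contributions become $(u_{\eps2}^2-u_{\eps1})(u_{\eps2}^{2p}-u_{\eps1}^p)\ge0$ and $(u_{\eps3}^2-u_{\eps2})(u_{\eps3}^{4p}-u_{\eps2}^{2p})\ge0$, while the diffusion terms have the good sign regardless of degeneracy. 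Sending $p\to\infty$ gives the $\eps$-free $L^\infty$ bound in one stroke. Your route instead slaves $u_3^\eps$ to $u_2^\eps$ via the maximum principle (this step is correct) and then invokes \cite{fitzgibbon2021reaction} on the uniformly parabolic $(u_1^\eps,u_2^\eps)$ block with forcing $k_2(u_3^\eps)^2$ to produce $\|u_2^\eps\|_\infty\le\Theta(\|u_3^\eps\|_\infty)$. But the closing assertion ``the $\tfrac12$-power closes the implicit bound'' needs $\Theta(r)=o(r^2)$, which you never establish: \cite{fitzgibbon2021reaction} is formulated for mass-dissipative systems, not for systems with an external $L^\infty$ source, and even if one tracks the constants the natural dependence on $\|k_2(u_3^\eps)^2\|_\infty$ is at least linear in that quantity, i.e.\ quadratic in $\|u_3^\eps\|_\infty$ --- precisely the borderline at which $A\le C(1+A)$ fails to bound $A$. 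The continuation argument you mention does not help: for each fixed $\eps$ the regularised solution is already global and bounded, so there is no blow-up alternative to play against; what is missing is an a priori bound on the set $\{A>0:A\le\Theta(A^{1/2})\}$, and that reduces to the growth of $\Theta$. The paper's Lyapunov functional sidesteps this entirely.
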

	\begin{remark}
		In the proof of Proposition \ref{pro2}, we need to know that $\nabla d_3 \in \LO{q}$ for some $q>n$, in order to deal with the degeneracy \eqref{D3'}. If $d_3$ is merely continuous on $\Omega$ and its zero-set $\{x\in\Omega: d_3(x) =0\}$ is a finite union of $(n-1)$-dimensional smooth manifolds, then we can show the global existence and boundedness of solutions without imposing any conditions on  the gradient of $d_3$. The idea is that strong compactness will hold (for the third component of the solution to an approximated problem) outside of an $\eps$-neighbourhood of the considered manifolds, and that convergence a.e.  of a subsequence on the whole domain can be obtained thanks to a diagonal extraction.
		% procedure to conclude the first pointwise and then strong convergence of approximate solutions.
		We leave the details to the interested reader.
	\end{remark}
	\begin{proof}
		We regularize the system \eqref{special_sys} as follows: for any $\eps\in(0,1)$, we define
		\begin{equation}\label{approx_diff}
			d_{\eps1}(x,t) := d_1(x,t), \quad d_{\eps2}(x,t) := d_2(x,t), \quad d_{\eps3}(x) := d_3(x) + \eps,
		\end{equation}
		\begin{equation}\label{approx_react}
			f_{\eps i}(x,t,u) := f_i(x,t,u)\bra{1+\eps\sum_{j=1}^{3}|f_j(x,t,u)|}^{-1}, \quad i=1,2,3.
		\end{equation}
		Consider now the approximating system for $u_{\eps}  = (u_{\eps1}, u_{\eps2}, u_{\eps3})$, that is
		\begin{equation}\label{approx_sys}
			\left\{
			\begin{aligned}
				&\partial_t u_{\eps i} - \na\cdot(d_{\eps i}\na u_{\eps i}) = f_{\eps i}(x,t,u_{\eps}), &&\text{ in } \Omega\times\mathbb R_+,\\
				&d_{\eps i}\na u_{\eps i}\cdot \nu = 0, &&\text{ on } \partial\Omega\times \R_+,\\
				&u_{\eps i}(x,0) = u_{i,0}, &&\text{ in } \Omega.
			\end{aligned}
			\right.
		\end{equation}
		By applying \cite[Theorem 1.1]{fitzgibbon2021reaction}, \eqref{approx_sys} has a unique global weak solution, which is also bounded uniformly in time. It is remarked, however, that this bound depends on $\eps$ and could in principle tend to $\infty$ as $\eps\to 0$. In the following, we show therefore some uniform-in-time bounds for $u_{\eps i}$, which are independent of $\eps$. In order to do that, we consider for $p\in \mathbb N$ the energy functional
		\begin{equation*}
			\mathcal{H}_p[u_{\eps}]:= \intO\bra{\frac{4}{p+1} \bra{u_{\eps1}}^{p+1} + \frac{2}{2p + 1}\bra{u_{\eps2}}^{2p+1} + \frac{1}{4p+1}\bra{u_{\eps3}}^{4p+1}}dx.
		\end{equation*}
		Differentiating $\mathcal{H}_p[u_{\eps}]$ in $t$, using the system leads to
		\begin{align*}
			\frac{d}{dt}\mathcal{H}_p[u_{\eps}](t) = &-4p\intO d_{\eps1}(u_{\eps1})^{p-1}|\na u_{\eps1}|^2dx - 4p\intO d_{\eps2}(u_{\eps2}) ^{2p-1}|\na u_{\eps2}|^2dx\\
			&-4p\intO d_{\eps3}(u_{\eps3})^{4p-1}|\na u_{\eps3}|^2dx\\
			& - 4\intO k_1(u_{\eps2}^2-u_{\eps1})(u_{\eps2}^{2p}-u_{\eps1}^p)\bra{1+{\eps}\sum_{j=1}^{3}|f_j(x,t,u_{\eps})|}^{-1}dx \\
			& - 2\intO k_2(u_{\eps3}^2 - u_{\eps2})(u_{\eps3}^{4p} - u_{\eps2}^{2p})\bra{1+{\eps}\sum_{j=1}^{3}|f_j(x,t,u_{\eps})|}^{-1}dx \le 0.
			%			&\sum_{i=1}^m\xi_i\int_{\Omega}\bra{u_i^{(k)}}^{p\lambda_i}\bra{\na\cdot\bra{\bra{d_i(x) + \frac 1k}\na u_i^{(k)}} + f^{(k)}(u^{(k)})}dx\\
			%			&= -\sum_{i=1}^{m}p\xi_i\lambda_i\int_{\Omega}\bra{d_i(x,t) + \frac 1k}\bra{u_i^{(k)}}^{p\lambda_i - 1}\abs{\na u_i^{(k)}}^2dx\\
			%			&\qquad -\sum_{i=1}^{m-1}\xi_i\mu_i\int_{\Omega}k_i(x,t)\pa{(u_i^{(k)})^{\mu_i} - (u_{i+1}^{(k)})^{\nu_i}}\rpa{\bra{(u_i^{(k)})^{\mu_i}}^{\frac{\lambda_i}{\mu_i}p} - \bra{(u_{i+1}^{(k)})^{\nu_i}}^{\frac{\lambda_i}{\mu_i}p}}\\
			%			&\qquad \qquad\qquad \quad  \times \bra{1+\frac 1k \sum_{j=1}^{m}|f_j(u^{(k)})|}^{-1} dx\\
			%			&\le 0.
		\end{align*}
		Therefore
		\begin{equation*}
			\mathcal{H}_p[u_{\eps}](t) \le \mathcal{H}_p[u_{\eps}](0), \quad \forall t\ge 0,
		\end{equation*}
		which entails  
		\begin{equation*}
			\begin{aligned}
				\frac{4}{p+1}\|u_{\eps1}(t)\|_{\LO{p+1}}^{p+1} + \frac{2}{2p+1}\|u_{\eps2}(t)\|_{\LO{2p+1}}^{2p+1} + \frac{1}{4p+1}\|u_{\eps3}(t)\|_{\LO{4p+1}}^{4p+1}\\
				\le \frac{4}{p+1}\|u_{1,0}\|_{\LO{p+1}}^{p+1} + \frac{2}{2p+1}\|u_{2,0}\|_{\LO{2p+1}}^{2p+1} + \frac{1}{4p+1}\|u_{3,0}\|_{\LO{4p+1}}^{4p+1}.
			\end{aligned}
		\end{equation*}
		We can take the root of order $p+1$ of both sides, then let $p\to \infty$, and  obtain that
		\begin{equation}\label{bound_approx}
			\|u_{\eps1}(t)\|_{\LO{\infty}} + \|u_{\eps2}(t)\|_{\LO{\infty}}^2 + \|u_{\eps3}(t)\|_{\LO{\infty}}^4 \le C\bra{\|u_{1,0}\|_{\LO{\infty}} + \|u_{2,0}\|_{\LO{\infty}}^2 + \|u_{3,0}\|_{\LO{\infty}}^4},
		\end{equation}
		for a constant $C$ \textit{independent of $\eps\in(0,1)$}. Thanks to this and \eqref{approx_react}, we have
		\begin{equation}\label{bound_nonlinearities}
			\sup_{\eps\in(0,1)}\sup_{i=1,2,3}\|f_i(\cdot,\cdot,u_{\eps})\|_{L^{\infty}(0,T;\LO{\infty})} < +\infty.
		\end{equation}		
		Since $d_{\eps1}$ and $d_{\eps2}$ are in fact independent of $\eps$, see \eqref{approx_diff}, the classical Aubin-Lions lemma gives the strong convergence of $u_{\eps1}$ and $u_{\eps2}$, up to some subsequence, i.e.
		\begin{equation*}
			u_{\eps i} \to u_i \quad \text{strongly in } L^2(0,T;L^2(\Omega)) \text{ and weakly in } L^2(0,T;H^1(\Omega))
		\end{equation*}
		for some $u_1, u_2 \in L^2(0,T;H^1(\Omega))$. At the same time, \eqref{bound_approx} implies that
		\begin{equation}\label{g0}
			u_{\eps3} \rightharpoonup u_3 \text{ weakly in } L^2(0,T;L^2(\Omega)).
		\end{equation}
		We will now show the strong convergence of $u_{\eps3}$. By multiplying by  $u_{\eps3}$ the third equation, and by integrating  on $\Omega\times(0,T)$, we get
		\begin{equation}\label{g1}
			\int_0^T\int_{\Omega}d_{\eps3}\, |\nabla u_{\eps3}|^2dxdt \le \frac  12\|u_{3,0}\|_{\LO{2}}^2 + \int_0^T\int_{\Omega}f_{\eps3}(x,t,u_{\eps})u_{\eps3}dxdt.
		\end{equation}
		Thanks to \eqref{approx_diff}, \eqref{bound_approx} and \eqref{bound_nonlinearities}, we see that
		\begin{equation}\label{g5}
			\sup_{\eps\in(0,1)}\int_0^T\int_{\Omega}d_{\eps3}|\nabla u_{\eps3}|^2dx < +\infty.
		\end{equation}
		Thanks to this estimate, the assumption on the gradient of $d_3$, and the elementary computation
		\begin{equation*}
			|\na (d_3u_{\eps3})| \le |\na d_3|u_{\eps3} + \sqrt{d_3}\cdot\sqrt{d_3}|\na u_{\eps3}|,
		\end{equation*}
		it follows that
		\begin{equation}\label{g3}
			\sup_{\eps\in(0,1)}\|\na(d_3u_{\eps3})\|_{L^2(0,T;L^2(\Omega))} <+\infty.
		\end{equation}
		For $\varphi \in L^2(0,T;H^1(\Omega))$, we have
		\begin{equation}\label{g2}
			\begin{aligned}
				\left|\int_0^T\langle \partial_t (d_3u_{\eps3}), \varphi\rangle dt\right| &\le \left|\int_0^T\int_{\Omega}d_{\eps3}\na u_{\eps3} \cdot\na (d_3\varphi)dxdt\right| + \int_0^T\int_{\Omega}|f_3(x,t,u_{\eps})||d_3\varphi|dxdt\\
				&\le \left|\int_0^T\intO\sqrt{d_{\eps3}}\bra{\sqrt{d_{\eps3}}\na u_{\eps3}} \, \varphi \,\na d_3 \, dxdt\right|
				\\
				& +\left|\int_0^T\intO\sqrt{d_{\eps3}}\bra{\sqrt{d_{\eps3}}\na u_{\eps3}}\,d_3 \,\na \varphi \, dxdt\right|\\
				&\quad + \, \|d_3f_3(\cdot,\cdot,u_{\eps})\|_{L^\infty(0,T;\LO{\infty})}\sqrt{T|\Omega|}\|\varphi\|_{L^2(0,T;\LO{2})}.
			\end{aligned}
		\end{equation}
		The second term and the first term on the right hand side of \eqref{g2} are estimated as
		\begin{equation*}
			\left|\int_0^T\intO\sqrt{d_{\eps3}}\bra{\sqrt{d_{\eps3}}\na u_{\eps3}}d_3 \na \varphi\, dxdt\right| \le \|\sqrt{d_{\eps3}}d_3\|_{\LO{\infty}}\|\sqrt{d_{\eps3}}\na u_{\eps3}\|_{L^2(0,T;L^2(\Omega))}\|\na \varphi\|_{L^2(0,T;L^2(\Omega))}
		\end{equation*}
		and
		\begin{equation*}
			\begin{aligned}
				&\left|\int_0^T\intO\sqrt{d_{\eps3}}\bra{\sqrt{d_{\eps3}}\na u_{\eps3}}\, \varphi\, \na d_3 \, dxdt\right|\\
				&\le C \|\sqrt{d_{\eps3}}\|_{\LO{\infty}}\|\sqrt{d_{\eps3}}\na u_{\eps3}\|_{L^2(0,T;L^2(\Omega))}\|\na d_3\|_{L^{q}(\Omega)}\|\varphi\|_{L^2(0,T;L^{2^*_n}(\Omega))}
			\end{aligned}
		\end{equation*}
		thanks to $d_3\in W^{1,q}(\Omega)$, $q>n$, and $H^1(\Omega)\subset L^{2^*_n}(\Omega)$,
		where $2^*_n = +\infty$ for $n = 1$, $2^*_n < \infty$ arbitrary for $n=2$, and $2^*_n = 2n/(n-2)$ for $n\ge 3$. Thus
		\begin{equation}\label{g4}
			\{\partial_t (d_3u_{\eps3})\}_{\eps\in(0,1)} \text{ is bounded in } L^2(0,T;(H^1(\Omega))').
		\end{equation}
		From \eqref{g3} and \eqref{g4}, we can use Aubin-Lions lemma to get, up to a subsequence
		\begin{equation*}
			d_3u_{\eps3} \to \xi \quad \text{ strongly in } L^2(0,T;L^2(\Omega)).
		\end{equation*}
		Since $|\{x\in\Omega: d_3(x)\}| = 0$, we have $d_3(x)>0$ a.e. in $\Omega$. Therefore, it follows that
		\begin{equation*}
			u_{\eps3} \to \frac{\xi}{d_3} \quad \text{a.e. in } \Omega\times (0,T).
		\end{equation*}
		From this and \eqref{bound_approx}, we have $u_{\eps3} \to \dfrac{\xi}{d_3}$ strongly in $L^2(\Omega\times(0,T))$, which in combination with \eqref{g0} yields $u_3 = \dfrac{\xi}{d_3}$. Moreover, interpolating with the bounds in \eqref{bound_approx} gives
		\begin{equation*}
			u_{\eps3} \to u_3 \quad \text{strongly in } L^p(0,T;L^p(\Omega)) \quad \forall p\in [1,\infty).
		\end{equation*}
		From \eqref{g5} and boundedness of $d_3$,
		\begin{equation}\label{g6}
			d_{\eps3}\nabla u_{\eps3} \rightharpoonup \chi \quad \text{weakly in } L^2(0,T;L^2(\Omega)).
		\end{equation}
		For any smooth vector field $\psi\in C_c^\infty((0,T)\times\Omega)^n$, using $\na d_{\eps3} = \na d_3$,
		\begin{align*}
			&\int_0^T\intO(d_{\eps3}\na u_{\eps3} - \blue{[\nabla(d_3 u_3) - u_3\nabla d_3]})\cdot \psi dxdt\\
			&\blue{=\int_0^T\int_{\Omega} [(d_{\eps3}\na u_{\eps3} + u_3\na d_3)\cdot \psi + d_3u_3\na\cdot \psi]\ dxdt}\\
			%&=\int_0^T\intO [d_{\eps3}\na(u_{\eps3} - u_3) + (d_{\eps3} - d_3)\na u_3]\cdot \psi dxdt\\
%			&= -\iint [u_{\eps3}\na d_3\cdot \psi - u_3\na d_3\cdot \psi + u_{\eps3}d_{\eps3}\na \cdot \psi - u_3d_{\eps3}\na\cdot \psi + u_3d_{\eps3}\na\cdot\psi - u_3d_3\na \cdot \psi]\\
			&= - \int_0^T\int_{\Omega} (u_{\eps3}-u_3)(\na d_3 \cdot \psi + d_{\eps3}\na\cdot\psi)dxdt -\int_0^T\intO u_3 (d_{\eps3} - d_3)\na \cdot \psi dxdt\\
			&\longrightarrow 0 \quad \text{ as $\eps \to 0$}.
		\end{align*}
		
		\medskip
		
		This means that $d_{\eps3}\na u_{\eps3}$ converges to $\blue{\nabla(d_3 u_3) - u_3\nabla d_3}$ in the sense of distributions. Together with \eqref{g6}, we finally obtain 
		\begin{equation*}
			d_{\eps3} \na u_{\eps3} \rightharpoonup \blue{\nabla(d_3 u_3) - u_3\nabla d_3} \quad \text{weakly in } L^2(0,T;L^2(\Omega)).
		\end{equation*}
		Now we can pass to the limit in the weak formulation of the approximating system
		\begin{equation*}
			\int_0^T\langle \partial_t u_{\eps i}, \varphi\rangle dt + \int_0^T\int_{\Omega}d_{\eps i}\na u_{\eps i}\cdot \na \varphi dxdt = \int_0^T\intO f_{\eps i}(u_{\eps})\varphi dxdt, \qquad \varphi\in L^2(0,T;H^1(\Omega)) ,
		\end{equation*}
		to conclude that $u = (u_1,u_2,u_3)$ is a weak solution to \eqref{special_sys} on $(0,T)$ for $T>0$ arbitrary. Moreover, this solution is bounded uniformly in time thanks to \eqref{bound_approx}, and consequently is unique due to the local Lipschitz continuity of the nonlinearities.
	\end{proof}
	
	\medskip
	To show the convergence to equilibrium, we use  some estimates which are similar to those used in the proof of Theorem \ref{thm:measurable}. We present them below:
	% concerning the entropy \eqref{entropy_special} and the entropy dissipation \eqref{entropy_dissipation_special}. Moreover, thanks to the uniform-in-time bounds of solutions, we have also the following similar estimates.
	\begin{lemma}\label{new_lem}
		Under the assumptions in Theorem \ref{thm:main3}, we have the following estimates for solutions to \eqref{special_sys}
		\begin{align*}
			&\sum_{j=1}^3\|u_j-u_{j,\infty}\|_{L^1(\Omega)}^2 \lesssim_{u_\infty,C_0} \E(u|u_\infty) \lesssim_{u_\infty} \sum_{j=1}^3\int_{\Omega}|u_j - u_{j,\infty}|^2dx,\\
			&\norm{\sqrt{u_j} - \avO{\sqrt{u_j}}}_{L^2(\Omega)}^2 \lesssim_{\Omega}\intO d_j\frac{|\na u_j|^2}{u_j}dx, \quad j=1,2,\\
			&\norm{u_j - \avO{u_j}}_{L^2(\Omega)}^2  \lesssim_{\Omega,C_0} \intO d_j\frac{|\na u_j|^2}{u_j}dx, \quad j=1,2,\\
			&\D(u) \gtrsim  \sum_{j=1}^3\int_{\Omega}d_j\frac{|\na u_j|^2}{u_j}dx + \int_{\omega_1}\abs{u_2 - \sqrt{u_1}}^2dx + \int_{\omega_2}\abs{u_3 - \sqrt{u_2}}^2dx,\\
			&|u_1 - u_{1,\infty}|^2  \lesssim_{C_0,u_\infty} |u_2 - \sqrt{u_1}|^2 + |u_2 - u_{2,\infty}|^2,\\
			&|u_3 - u_{3,\infty}|^2  \lesssim_{u_\infty} |u_3 - \sqrt{u_2}|^2 + |u_2 - u_{2,\infty}|^2,\\
			&|u_2 - u_{2,\infty}|^2 \lesssim_{C_0} \sum_{j=1}^3|u_j - \avO{u_j}|^2   + |u_2 - \sqrt{u_1}|^2 + |u_3 - \sqrt{u_2}|^2 ,
		\end{align*}
		where
		\begin{equation*} 
			C_0:= \sup_{t\ge 0}\sum_{i=1}^3\|u_i(t)\|_{\LO{\infty}}.
		\end{equation*}
	\end{lemma}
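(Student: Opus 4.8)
The plan is to observe that every one of the seven displayed inequalities is the exact counterpart of an estimate already established in Section~\ref{sec3} for the non-degenerate system of Theorem~\ref{thm:measurable}, so the proof will consist in rerunning those arguments while keeping track of the (only) two places where the diffusion coefficients genuinely intervene. Throughout, the uniform-in-time $\LO{\infty}$ bound furnished by Proposition~\ref{pro2} supplies the constant $C_0$, hence also a uniform $\LO{1}$ bound, exactly as $C_0$ in \eqref{C0} was supplied by Proposition~\ref{pro1}.

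I would first handle the two-sided entropy estimate: this is Lemma~\ref{lem6}, whose lower bound is the Csisz\'ar--Kullback--Pinsker inequality from Lemma~\ref{lem:CSK} (with constant depending on $C_0$, $\Omega$ and $\uinf$), and whose upper bound follows from the pointwise elementary inequality $x\ln(x/y)-x+y\le |x-y|^2/y$ with $y=u_{j,\infty}$. Similarly, the three pointwise inequalities are precisely Lemma~\ref{lem8}: they rest on the equilibrium identities \eqref{equi_sys_special}--\eqref{e5}, the lower Lipschitz bound $|f(w)-f(z)|\ge 2|w-z|$ for $f(z)=4z^2+2z+\sqrt z$, the conservation law \eqref{conservation_special}, and the bound $C_0$; since no diffusion coefficient appears in any of these manipulations, their proofs transfer verbatim.

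The two Poincar\'e-type inequalities are where the degenerate setting makes a difference, and here I would invoke assumption \eqref{D1D2}: for $j=1,2$ one has $d_j(x,t)\ge\delta$, so
\begin{equation*}
	\intO d_j\frac{|\na u_j|^2}{u_j}dx \;\ge\; 4\delta\intO|\na\sqrt{u_j}|^2 dx \;\gtrsim_{\Omega,\delta}\; \norm{\sqrt{u_j}-\avO{\sqrt{u_j}}}_{L^2(\Omega)}^2
\end{equation*}
by Poincar\'e--Wirtinger, after which $\norm{u_j-\avO{u_j}}_{L^2(\Omega)}^2\lesssim_{C_0}\norm{\sqrt{u_j}-\avO{\sqrt{u_j}}}_{L^2(\Omega)}^2$ follows as in \eqref{e3}. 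Crucially, I would \emph{not} attempt the analogous statement for $j=3$, since $d_3$ may vanish; recovering spatial homogeneity of $u_3$ is deferred to the main convergence argument, where the reaction term on $\omega_2$ together with \eqref{assumption} takes over this role. Finally, the lower bound for $\D(u)$ is Lemma~\ref{lem7_1}: from $(x-y)\ln(x/y)\ge(\sqrt x-\sqrt y)^2$ and the reaction-rate lower bounds \eqref{k1k2} one extracts the two reaction contributions, while the diffusive part $\sum_{j=1}^{3}\intO d_j|\na u_j|^2/u_j$ is simply retained as written, so no lower bound on $d_3$ is needed at this step.

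There is no genuine obstacle in this lemma; the only point demanding attention is the restriction of the Poincar\'e-type controls to $j=1,2$ through \eqref{D1D2}, and recognising that this restriction --- far from being a technical annoyance --- is precisely the reason the subsequent proof of \eqref{exp_convergence} must treat the third species separately, via $\omega_2$ and the containment \eqref{assumption}.
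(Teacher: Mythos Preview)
Your proposal is correct and matches the paper's own proof, which simply remarks that the estimates are identical to those of Lemmas~\ref{lem6}, \ref{lem7}, \ref{lem7_1} and \ref{lem8} because none of those proofs required a positive lower bound on $d_3$. Your additional observation that the restriction to $j=1,2$ in the Poincar\'e-type controls is precisely what forces the separate treatment of $u_3$ via Lemma~\ref{lem13} is accurate and anticipates the paper's subsequent argument.
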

	\begin{proof}
		The proofs of these estimates are the same as the proofs of Lemmas \ref{lem6}, \ref{lem7}, \ref{lem7_1}, \ref{lem8}, since we do not need a positive lower bound for $d_3$ in these proofs.
	\end{proof}

	\medskip
	Comparing to the proof of Theorem \ref{thm:measurable}, we need some estimates to compensate the lack of diffusion of $\S_3$ in some part of $\Omega$. This is done in the following lemma.
	\begin{lemma}\label{lem13}
		Under the assumptions of Theorem \ref{thm:main3}, it holds for solutions to \eqref{special_sys},
		\begin{equation*} 
			\int_{\Omega}\abs{u_3 - \avO{u_3}}^2dx \lesssim \D(u).
		\end{equation*}
	\end{lemma}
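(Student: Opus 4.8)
The plan is to exploit that $\S_3$ diffuses non-degenerately away from a small neighbourhood of the compact null set $Z:=\{x\in\overline\Omega:\ d_3(x)=0\}$, while on $\omega_2$ — which, by \eqref{assumption} together with the openness of $\omega_2$, contains an entire neighbourhood of $Z$ — the reaction $\S_2\leftrightarrows 2\S_3$ keeps $u_3$ close to $\sqrt{u_2}$, and the non-degenerate diffusion of $\S_2$ (from \eqref{D1D2}) keeps $\sqrt{u_2}$ close to a constant. To set the geometry: since $d_3\in W^{1,q}(\Omega)$ with $q>\max(n,2)$ we have $d_3\in C(\overline\Omega)$, so $Z$ is compact and, by \eqref{assumption}, $Z\subset\omega_2$ with $\omega_2$ open; using \eqref{D3'} ($|Z|=0$) I would choose a finite union of small open balls $N\supset Z$ with $\overline N\subset\omega_2$, $|N|<|\omega_2|$, and $G:=\Omega\setminus\overline N$ a Lipschitz domain (so \eqref{P} holds on $G$ and on each of its finitely many connected components; for simplicity assume $\Omega$, hence $G$, connected). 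On the compact set $\overline G\subset\overline\Omega\setminus N$ one has $d_3>0$, hence $d_3\geq\underline d_3>0$ there; moreover $\Omega=G\cup\omega_2$ and $\kappa_0:=|G\cap\omega_2|=|\omega_2\setminus\overline N|>0$.

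Next I would record the consequences of Lemma \ref{new_lem}: $\int_\Omega d_j\frac{|\na u_j|^2}{u_j}\,dx\lesssim\D(u)$ for $j=2,3$, and $\int_{\omega_2}|u_3-\sqrt{u_2}|^2\,dx\lesssim\D(u)$. Since $\frac{|\na u_3|^2}{u_3}=4|\na\sqrt{u_3}|^2$ and $d_3\geq\underline d_3$ on $G$, the first estimate gives $\int_G|\na\sqrt{u_3}|^2\,dx\lesssim\D(u)$; Poincar\'e--Wirtinger on $G$ and the uniform bound $C_0$ then yield, with the constant $a:=\big([\sqrt{u_3}]_G\big)^2$,
\[
\|u_3-a\|_{L^2(G)}^2\ \lesssim\ \|\sqrt{u_3}-[\sqrt{u_3}]_G\|_{L^2(G)}^2\ \lesssim\ \D(u).
\]
Likewise $\int_\Omega|\na\sqrt{u_2}|^2\,dx\lesssim\D(u)$ (here \eqref{D1D2} is used), so Poincar\'e--Wirtinger on $\Omega$ gives, with $b:=\avO{\sqrt{u_2}}$,
\[
\|\sqrt{u_2}-b\|_{L^2(\Omega)}^2\ \lesssim\ \D(u),\qquad\text{hence}\qquad \|u_3-b\|_{L^2(\omega_2)}^2\ \lesssim\ \|u_3-\sqrt{u_2}\|_{L^2(\omega_2)}^2+\|\sqrt{u_2}-b\|_{L^2(\Omega)}^2\ \lesssim\ \D(u).
\]

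It then remains to bridge the two constants $a$ and $b$. On $G\cap\omega_2$, of measure $\kappa_0>0$, both of the previous bounds are in force, so
\[
|a-b|^2=\frac1{\kappa_0}\int_{G\cap\omega_2}|a-b|^2\,dx\ \lesssim\ \frac1{\kappa_0}\Big(\|u_3-a\|_{L^2(G)}^2+\|u_3-b\|_{L^2(\omega_2)}^2\Big)\ \lesssim\ \D(u).
\]
Finally, splitting $\Omega=G\cup\omega_2$ and using $\|u_3-b\|_{L^2(G)}^2\lesssim\|u_3-a\|_{L^2(G)}^2+|G|\,|a-b|^2$ one obtains $\|u_3-b\|_{L^2(\Omega)}^2\lesssim\D(u)$, whence $\int_\Omega|u_3-\avO{u_3}|^2\,dx\le\|u_3-b\|_{L^2(\Omega)}^2\lesssim\D(u)$, which is the claim; the implicit constant depends only on $\Omega$, $\omega_2$, $d_3$ (through $\underline d_3$, the Poincar\'e constant of $G$, $|G|$ and $\kappa_0$), $\delta$, $\kappa$, $C_0$ and $u_\infty$.

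I expect the only genuinely delicate point to be the geometric construction: one must thicken $Z$ to a set $N$ with $\overline N\subset\omega_2$ while keeping $G=\Omega\setminus\overline N$ a Lipschitz domain and the overlap $|G\cap\omega_2|$ uniformly positive — a finite cover of $Z$ by small balls (with radii perturbed so that all sphere intersections are transversal) does the job. Everything else is a routine combination of Poincar\'e--Wirtinger, the triangle inequality, the $L^\infty$-bound $C_0$ and Lemma \ref{new_lem}. If $G$ turns out to be disconnected, the same argument is run on each of its (finitely many) components, each of which must meet $\omega_2$ because $\Omega$ is connected and $\overline N\subset\omega_2$. It is worth noting that this proof is $\eps$-robust: replacing $d_3$ by $d_3+\eps$ changes nothing in the argument since the lower bound $\underline d_3$ on $\overline G$ is already available without the regularisation, which is exactly what is needed for the last remark following Theorem \ref{thm:main3}.
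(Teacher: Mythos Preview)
Your proof is correct and follows essentially the same geometric strategy as the paper's: both excise an open neighbourhood $N$ (the paper's $B$) of the null set $Z$ with $\overline N\subset\omega_2$, apply Poincar\'e--Wirtinger on the Lipschitz complement $G=\Omega\setminus\overline N$ (the paper's $B^c$) where $d_3$ is bounded below, control $u_3$ on $\omega_2$ via the reaction term $\int_{\omega_2}|u_3-\sqrt{u_2}|^2$ together with the diffusion of $u_2$, and bridge the two regions through the overlap $\omega_2\setminus\overline N$. Your presentation is a bit more streamlined---by anchoring $\sqrt{u_2}$ to its average over $\Omega$ rather than over $\omega_2$ you avoid the paper's separate use of Poincar\'e--Wirtinger on $\omega_2$ (so the Lipschitz regularity of $\partial\omega_2$ is not actually needed in your argument)---but the underlying idea is identical.
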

	\begin{proof}
		Because $\omega _{2}$ is open with Lipschitz boundary, 
		the Poincar\'{e}-Wirtinger inequality holds for $\omega _{2}$. The assumption $%
		d_{3}\in W^{1,q}\left( \Omega \right) $ implies $d_{3}\in C(\overline{\Omega})$ thanks to Sobolev embedding. From the assumption $\{ x\in 
		\overline{\Omega }:d_{3}\left( x\right) =0\} \subset \omega
		_{2}$, there
		is $B$ an open set of class $C^{1}$ such that $\{ x\in \overline{\Omega}:d_{3}\left( x\right) =0\} \subset B\Subset \omega _{2}$ with the
		properties:
		\begin{itemize}
			\item the Poincar\'{e}-Wirtinger inequality in $B^{c}=\Omega \left\backslash
			B\right. $ holds;
			
			\item $d_{3}\left( x\right) \geq \varrho,\, \forall x\in B^{c}$ for some $\varrho>0$;
			
			\item $\omega _{2}^{c}\subset B^{c};$
			
			\item $\omega _{2}\left\backslash B\right. \subset \omega _{2}$ and $\omega
			_{2}\left\backslash B\right. \subset B^{c}.$
		\end{itemize}
		Next, using triangle inequalities, we estimate%
		\begin{eqnarray}
			\int_{\Omega }\left\vert u_{3}-\left[ u_{3}\right] _{\Omega }\right\vert
			^{2}dx &\lesssim &\int_{\omega _{2}^{c}}\left\vert u_{3}-\left[ u_{3}\right]
			_{\Omega }\right\vert ^{2}dx+\int_{\omega _{2}}\left\vert u_{3}-\left[ u_{3}%
			\right] _{\Omega }\right\vert ^{2}dx  \notag \\
			&\lesssim &\int_{\omega _{2}^{c}}\left\vert u_{3}-\left[ u_{3}\right]
			_{B^{c}}\right\vert ^{2}dx+\left\vert \left[ u_{3}\right] _{B^{c}}-\left[
			u_{3}\right] _{\Omega }\right\vert ^{2}  \notag \\
			&&\quad +\int_{\omega _{2}}\left\vert u_{3}-\sqrt{u_{2}}\right\vert
			^{2}dx+\int_{\omega _{2}}\left\vert \sqrt{u_{2}}-\left[ \sqrt{u_{2}}\right]
			_{\omega _{2}}\right\vert ^{2}dx  \notag \\
			&&\quad +\left\vert \left[ \sqrt{u_{2}}\right] _{\omega _{2}}-\left[ u_{3}%
			\right] _{\omega _{2}}\right\vert ^{2}+\left\vert \left[ u_{3}\right]
			_{\omega _{2}}-\left[ u_{3}\right] _{\Omega }\right\vert ^{2}.  \label{e0}
		\end{eqnarray}%
		We treat $\left\vert \left[ u_{3}\right] _{B^{c}}-\left[ u_{3}\right]
		_{\Omega }\right\vert ^{2}+\left\vert \left[ u_{3}\right] _{\omega _{2}}-%
		\left[ u_{3}\right] _{\Omega }\right\vert ^{2}$ with the aim to drop the term $%
		\left[ u_{3}\right] _{\Omega }$. Since $\left\vert \omega _{2}\right\vert
		+\left\vert \omega _{2}^{c}\right\vert =\left\vert \Omega \right\vert =1$
		and $\left[ u_{3}\right] _{\Omega }=\left\vert \omega _{2}\right\vert \left[
		u_{3}\right] _{\omega _{2}}+\left\vert \omega _{2}^{c}\right\vert \left[
		u_{3}\right] _{\omega _{2}^{c}}$, it holds 
		\begin{equation*}
			\left\vert \left[ u_{3}\right] _{\omega _{2}}-\left[ u_{3}\right] _{\Omega
			}\right\vert ^{2}=\left\vert \omega _{2}^{c}\right\vert ^{2}\left\vert \left[
			u_{3}\right] _{\omega _{2}}-\left[ u_{3}\right] _{\omega
				_{2}^{c}}\right\vert ^{2}.
		\end{equation*}%
		Therefore, by triangle inequality, we have%
		\begin{equation}\label{k1}
			\begin{aligned}
				\left\vert \left[ u_{3}\right] _{B^{c}}-\left[ u_{3}\right] _{\Omega
				}\right\vert ^{2}+\left\vert \left[ u_{3}\right] _{\omega _{2}}-\left[ u_{3}%
				\right] _{\Omega }\right\vert ^{2} &\lesssim \left\vert \left[ u_{3}\right]
				_{B^{c}}-\left[ u_{3}\right] _{\omega _{2}}\right\vert ^{2}+\left\vert \left[
				u_{3}\right] _{\omega _{2}}-\left[ u_{3}\right] _{\Omega }\right\vert ^{2} \\
				&\lesssim \left\vert \left[ u_{3}\right] _{B^{c}}-\left[ u_{3}\right]
				_{\omega _{2}}\right\vert ^{2}+\left\vert \left[ u_{3}\right] _{\omega _{2}}-%
				\left[ u_{3}\right] _{\omega _{2}^{c}}\right\vert ^{2} \\
				&\lesssim \left\vert \left[ u_{3}\right] _{B^{c}}-\left[ u_{3}\right]
				_{\omega _{2}}\right\vert ^{2}+\left\vert \left[ u_{3}\right] _{B^{c}}-\left[
				u_{3}\right] _{\omega _{2}^{c}}\right\vert ^{2}.
			\end{aligned}
		\end{equation}%
		We bound $\left\vert \left[ u_{3}\right] _{B^{c}}-\left[ u_{3}\right]
		_{\omega _{2}^{c}}\right\vert ^{2}$ thanks to Cauchy-Schwarz inequality as
		follows%
		\begin{equation}\label{k2}
			\left\vert \left[ u_{3}\right] _{B^{c}}-\left[ u_{3}\right] _{\omega
				_{2}^{c}}\right\vert ^{2}=\left\vert \frac{1}{\left\vert \omega
				_{2}^{c}\right\vert }\int_{\omega _{2}^{c}}\left( \left[ u_{3}\right]
			_{B^{c}}-u_{3}\right) dx\right\vert ^{2}\leq \frac{1}{\left\vert \omega
				_{2}^{c}\right\vert }\int_{\omega _{2}^{c}}\left\vert \left[ u_{3}\right]
			_{B^{c}}-u_{3}\right\vert ^{2}dx.
		\end{equation}%
		Now, we estimate $\left\vert \left[ u_{3}\right] _{B^{c}}-\left[ u_{3}\right]
		_{\omega _{2}}\right\vert ^{2}$. Let $\vartheta :=\omega _{2}\left\backslash
		B\right. $. Since $\vartheta \subset \omega _{2}$ and $\vartheta \subset
		B^{c}$, it holds%
		\begin{eqnarray}
			\quad \left\vert \left[ u_{3}\right] _{B^{c}}-\left[ u_{3}\right] _{\omega
				_{2}}\right\vert ^{2} &=&\frac{1}{\left\vert \vartheta \right\vert }%
			\int_{\vartheta }\left\vert \left[ u_{3}\right] _{B^{c}}-u_{3}+u_{3}-\left[
			u_{3}\right] _{\omega _{2}}\right\vert ^{2}dx\label{k3} \\
			&\lesssim &\int_{\vartheta }\left\vert u_{3}-\left[ u_{3}\right]
			_{B^{c}}\right\vert ^{2}dx\nonumber \\
			&&\quad +\int_{\vartheta }\left\vert u_{3}-\sqrt{u_{2}}\right\vert
			^{2}dx+\int_{\vartheta }\left\vert \sqrt{u_{2}}-\left[ \sqrt{u_{2}}\right]
			_{\omega _{2}}\right\vert ^{2}dx+\left\vert \left[ \sqrt{u_{2}}\right]
			_{\omega _{2}}-\left[ u_{3}\right] _{\omega _{2}}\right\vert ^{2}\nonumber \\
			&\lesssim &\int_{B^{c}}\left\vert u_{3}-\left[ u_{3}\right]
			_{B^{c}}\right\vert ^{2}dx\nonumber \\
			&&\quad +\int_{\omega _{2}}\left\vert u_{3}-\sqrt{u_{2}}\right\vert
			^{2}dx+\int_{\omega _{2}}\left\vert \sqrt{u_{2}}-\left[ \sqrt{u_{2}}\right]
			_{\omega _{2}}\right\vert ^{2}dx+\left\vert \left[ \sqrt{u_{2}}\right]
			_{\omega _{2}}-\left[ u_{3}\right] _{\omega _{2}}\right\vert^{2}\nonumber.
		\end{eqnarray}%
		Combining the estimates \eqref{k1}, \eqref{k2} and \eqref{k3} with \eqref{e0}, one can conclude that%
		\begin{eqnarray*}
			\int_{\Omega }\left\vert u_{3}-\left[ u_{3}\right] _{\Omega }\right\vert
			^{2}dx &\lesssim &\int_{\omega _{2}^{c}}\left\vert u_{3}-\left[ u_{3}\right]
			_{B^{c}}\right\vert ^{2}dx+\int_{B^{c}}\left\vert u_{3}-\left[ u_{3}\right]
			_{B^{c}}\right\vert ^{2}dx \\
			&&\quad +\int_{\omega _{2}}\left\vert u_{3}-\sqrt{u_{2}}\right\vert
			^{2}dx+\int_{\omega _{2}}\left\vert \sqrt{u_{2}}-\left[ \sqrt{u_{2}}\right]
			_{\omega _{2}}\right\vert ^{2}dx \\
			&&\quad +\left\vert \left[ \sqrt{u_{2}}\right] _{\omega _{2}}-\left[ u_{3}%
			\right] _{\omega _{2}}\right\vert ^{2} \\
			&&=:\left( I\right) +\left( II\right) +\left( III\right) +\left( IV\right)
			+\left( V\right) .
		\end{eqnarray*}%
		Due to $\omega _{2}^{c}\subset B^{c}$, Poincar\'{e}-Wirtinger inequality on $%
           B^{c}$ and $d_{3}\geq \varrho>0$ on $B^{c}$, 
		\begin{equation*}
			\left( I\right) +\left( II\right) \lesssim \int_{B^{c}}\left\vert u_{3}- 
			\left[ u_{3}\right] _{B^{c}}\right\vert ^{2}dx\lesssim
			\int_{B^{c}}\left\vert \nabla u_{3}\right\vert ^{2}dx\lesssim \int_{\Omega
			}d_{3}\frac{\left\vert \nabla u_{3}\right\vert ^{2}}{u_{3}}dx\leq 
			\mathcal{D}\left( u\right) .
        	\end{equation*}%
		By Poincar\'{e}-Wirtinger inequality on $\omega _{2}$, we have 
		\begin{equation*}
			\left( IV\right) \lesssim \int_{\omega _{2}}\frac{\left\vert \nabla
				u_{2}\right\vert ^{2}}{u_{2}}dx\lesssim \int_{\Omega }d_{2}\frac{\left\vert
				\nabla u_{2}\right\vert ^{2}}{u_{2}}dx\leq \mathcal{D}\left( u\right) .
		\end{equation*}%
		Besides, thanks to Cauchy-Schwarz inequality and Lemma \ref{new_lem},
		\begin{equation*}
			\left( III\right) +\left( V\right) \lesssim \int_{\omega _{2}}\left\vert u_{3}-\sqrt{u_{2}}%
			\right\vert ^{2}\lesssim \mathcal{D}\left( u\right) .
		\end{equation*}%
		This concludes the proof of Lemma \ref{lem13}.
	\end{proof}
	
	We are now ready to prove Theorem \ref{thm:main3}.
	\begin{proof}[Proof of Theorem \ref{thm:main3}]
		Thanks to Lemma \ref{lem13}, we can use the same arguments in the proof of Theorem \ref{thm:measurable} to obtain the entropy-entropy dissipation inequality
		\begin{equation*} 
			\mathcal{E}(u|u_\infty)\lesssim \mathcal{D}(u).
		\end{equation*}
		Indeed, the same arguments as in Lemma \ref{lem10} gives
		\begin{equation*} 
			\E(u|u_\infty) \lesssim  \sum_{j=1}^3\|u_j- \avO{u_{j}}\|_{\LO{2}}^2 + \int_{\omega_1}\bra{|u_2 - \sqrt{u_1}|^2 + |u_3 - \sqrt{u_2}|^2}dx.
		\end{equation*}
		It follows from Lemma \ref{new_lem} that
		\begin{equation*} 
			\sum_{j=1}^2\|u_j- \avO{u_{j}}\|_{\LO{2}}^2 + \int_{\omega_1}|u_2 - \sqrt{u_1}|^2dx \lesssim \D(u).
		\end{equation*}
		Lemma \ref{lem13} gives
		\begin{equation*} 
			\|u_3 - \avO{u_3}\|_{\LO{2}}^2 \lesssim \D(u).
		\end{equation*}
		With this estimate at hand, we can finally estimate
		\begin{equation*} 
			\int_{\omega_1}|u_3 - \sqrt{u_2}|^2dx \lesssim \D(u)
		\end{equation*}
		by using the same arguments as in Lemma \ref{lem11}.

		\medskip
		
		The convergence to equilibrium then follows in the standard way, as in Proof of Theorem \ref{thm:measurable}, so we omit the details.
	\end{proof}
	\medskip
	\noindent{\bf Acknowledgement.} B.Q. Tang is supported by NAWI Graz and received funding from the FWF project ``Quasi-steady-state approximation for PDE'', number I-5213. 
	
	This work was partly carried out during the visits of the first author to the University of Graz, and of the last author to the Universit\'e Paris Cit\'e  and the University of Orl\'eans. The universities' hospitality is greatly acknowledged.

	%	\bibliographystyle{alpha}
	%	\bibliography{ref}
\end{document}